\documentclass[notitlepage,11pt,reqno]{amsart}
\usepackage[foot]{amsaddr}
\usepackage{amssymb,nicefrac,bm,upgreek,mathtools,verbatim}
\usepackage[final]{hyperref}
\usepackage[mathscr]{eucal}
\usepackage{dsfont}
\usepackage[normalem]{ulem}
\usepackage{amsopn,esint}
\usepackage[T1]{fontenc}

\usepackage[utf8]{inputenc}%for appendix
\usepackage{apptools}%for appendix
\AtAppendix{\counterwithin{lemma}{section}} %for appendix

\usepackage[margin=1in]{geometry}
%\allowdisplaybreaks
\raggedbottom

\newcommand{\stkout}[1]{\ifmmode\text{\sout{\ensuremath{#1}}}\else\sout{#1}\fi}
%%%%%%%%%%%%%%%%%%%%%%%%%%%%%%%%%%%%%%%%%%%%%%%%%%%%%%%%%%%%%%%%%%%%%%%%%%%%%%%%

\newtheorem{lemma}{Lemma}[section]
\newtheorem{theorem}{Theorem}[section]
\newtheorem{proposition}{Proposition}[section]

\theoremstyle{definition}
\newtheorem{definition}{Definition}[section]
\newtheorem{assumption}{Assumption}[section]

\newtheorem{example}{Example}[section]

\theoremstyle{remark}
\newtheorem{remark}{Remark}[section]
%%%%%%%%%%%%%%%%%%%%%%%%%%%%%%%%%%%%%%%%%%%%%%%%%%%%%%%%%%%%%%%%%%%%%%%%%%%%%%%%
\numberwithin{theorem}{section}
\numberwithin{equation}{section}

%%%%%%%%%%%%%%%%%%%%%%%%%%%%%%%%%%%%%%%%%%%%%%%%%%%%%%%%%%%%%%%%%%%%%%%%%%%%%%%%
\hypersetup{
  colorlinks=true,
  citecolor=mblue,
  linkcolor=mblue,
  urlcolor = blue,
  anchorcolor = blue,
  frenchlinks=false,
  pdfborder={0 0 0},
  naturalnames=false,
  hypertexnames=false,
  breaklinks}
%%%%%%%%%%%%%%%%%%%%%%%%%%%%%%%%%%%%%%%%%%%%%%%%%%%%%%%%%%%%%%%%%%%%%%%%%%%%%%%%
\usepackage[capitalize,nameinlink]{cleveref}

%%%%%%%%%%%%%%%%%%%%%%%%%%%%%%%%%%%%%%%%%%%%%%%%%%%%%%%%%%%%%%%%%%%%%%%%%%%%%%%%
\usepackage[abbrev,msc-links,nobysame,citation-order]{amsrefs}
%backrefs %citation-order %alphabetic
%%%%%%%%%%%%%%%%%%%%%%%%%%%%%%%%%%%%%%%%%%%%%%%%%%%%%%%%%%%%%%%%%%%%%%%%%%%%%%%%

% Per SIAM Style Manual, "section" should be lowercase
\crefname{section}{Section}{Sections}
\crefname{subsection}{Section}{Sections}
\crefname{condition}{Condition}{Conditions}
\crefname{hypothesis}{Hypothesis}{Conditions}
\crefname{assumption}{Assumption}{Assumptions}
\crefname{lemma}{Lemma}{Lemmas}
\crefname{fact}{Fact}{Facts}

% Per SIAM Style Manual, "Figure" should be spelled out in references
\Crefname{figure}{Figure}{Figures}

% Per SIAM Style Manual, don't say equation in front on an equation.
\crefformat{equation}{\textup{#2(#1)#3}}
\crefrangeformat{equation}{\textup{#3(#1)#4--#5(#2)#6}}
\crefmultiformat{equation}{\textup{#2(#1)#3}}{ and \textup{#2(#1)#3}}
{, \textup{#2(#1)#3}}{, and \textup{#2(#1)#3}}
\crefrangemultiformat{equation}{\textup{#3(#1)#4--#5(#2)#6}}%
{ and \textup{#3(#1)#4--#5(#2)#6}}{, \textup{#3(#1)#4--#5(#2)#6}}%
{, and \textup{#3(#1)#4--#5(#2)#6}}

% But spell it out at the beginning of a sentence.
\Crefformat{equation}{#2Equation~\textup{(#1)}#3}
\Crefrangeformat{equation}{Equations~\textup{#3(#1)#4--#5(#2)#6}}
\Crefmultiformat{equation}{Equations~\textup{#2(#1)#3}}{ and \textup{#2(#1)#3}}
{, \textup{#2(#1)#3}}{, and \textup{#2(#1)#3}}
\Crefrangemultiformat{equation}{Equations~\textup{#3(#1)#4--#5(#2)#6}}%
{ and \textup{#3(#1)#4--#5(#2)#6}}{, \textup{#3(#1)#4--#5(#2)#6}}%
{, and \textup{#3(#1)#4--#5(#2)#6}}

% Make number non-italic in any environment.
\crefdefaultlabelformat{#2\textup{#1}#3}
%%%%%%%%%%%%%%%%%%%%%%%%%%%%%%%%%%%%%%%%%%%%%%%%%%%%%%%%%%%%%%%%%%%%%%%%%%%%%%%%%
%\usepackage{refcheck}
%%%%%%%%%%%%%%%%%%%%%%%%%%%%%%%%%%%%%%%%%%%%%%%%%%%%%%%%%%%%%%%%%%%%%%%%%%%%%%%%%%
%\makeatletter
%\newcommand{\refcheckize}[1]{%
%  \expandafter\let\csname @@\string#1\endcsname#1%
%  \expandafter\DeclareRobustCommand\csname relax\string#1\endcsname[1]{%
%    \csname @@\string#1\endcsname{##1}\@for\@temp:=##1\do{\wrtusdrf{\@temp}%
%\wrtusdrf{{\@temp}}}}%
%  \expandafter\let\expandafter#1\csname relax\string#1\endcsname}
%\newcommand{\refcheckizetwo}[1]{%
%  \expandafter\let\csname @@\string#1\endcsname#1%
%  \expandafter\DeclareRobustCommand\csname relax\string#1\endcsname[2]{%
%    \csname @@\string#1\endcsname{##1}{##2}\wrtusdrf{##1}\wrtusdrf{{##1}}%
%\wrtusdrf{##2}\wrtusdrf{{##2}}}%
%  \expandafter\let\expandafter#1\csname relax\string#1\endcsname}
%\makeatother
%
%\refcheckize{\cref}
%\refcheckize{\Cref}
%\refcheckizetwo{\crefrange}
%\refcheckizetwo{\Crefrange}
%%%%%%%%%%%%%%%%%%%%%%%%%%%%%%%%%%%%%%%%%%%%%%%%%%%%%%%%%%%%%%%%%%%%%%%%%%%%%%%%%
%%%%%%%%%%%%%%%%%%%%%%%%%%%%%%%%%%%%%%%%%%%%%%%%%%%%%%%%%%%%%%%%%%%%%%%%%%%%%%%%%

\newcommand{\vertiii}[1]{{\left\vert\kern-0.25ex\left\vert\kern-0.25ex\left\vert #1
    \right\vert\kern-0.25ex\right\vert\kern-0.25ex\right\vert}}
%\newcommand{\lamstr}{\lambda^{\!*}}% lambda with corrected 'star'
% lambda with corrected 'star'

    % running cost

%\newcommand{\Ussm}{\mathfrak{V}_{\mathsf{sm}}}

 % Invariant Measures

  % generator
  % generator
  % generator
  % Borel Sets
  % Borel Maps
  % Ball
   % Continuous functions
   % Dense set in C^2

   % sigma field
 % set of ergodic occupation measures
  % class of functions
  % class of invariant measures
\newcommand{\cH}{{\mathcal{H}}}  % Hamiltonian
  % integral operator
  % discounted cost
\newcommand{\sJ}{{\mathscr{J}}}
  % partition set for running cost
  % compact set
    % generator
    % generator
  %
            % Lp
            % Lp local

  % Poisson random measure
  % Probability measures

\newcommand{\sR}{\mathscr{R}}
  % Lyapunov

\newcommand{\RR}{\mathds{R}}

\newcommand{\Rn}{{\mathds{R}^{n}}}

\newcommand{\D}{\mathrm{d}}
\newcommand{\E}{\mathrm{e}}
\newcommand{\Ind}{\mathds{1}}   % indicator function
  %intervals of R with respecto rates
    % Sobolev Space
 % Sobolev Space(local)

\newcommand{\df}{\coloneqq}

\DeclareMathOperator*{\dist}{dist}
\DeclareMathOperator*{\diam}{diam}

\DeclareMathOperator*{\supp}{support}

\newcommand{\grad}{\nabla}

\newcommand{\abs}[1]{\lvert#1\rvert}
\newcommand{\norm}[1]{\lVert#1\rVert}

%compose functions we use \comp

%%%%%%%%%%%%%%%%%%%%%%%%%%%%%%%%%%%%%%%%%%%%%%%%%%%%%%%%%%%%%%%%%%%%%%%%%%%%%%%%
\usepackage{color}
\definecolor{dmagenta}{rgb}{.4,.1,.5}
\definecolor{dblue}{rgb}{.0,.0,.5}
\definecolor{mblue}{rgb}{.0,.0,.7}
\definecolor{ddblue}{rgb}{.0,.0,.4}
\definecolor{dred}{rgb}{.7,.0,.0}
\definecolor{dgreen}{rgb}{.0,.5,.0}
\definecolor{Eeom}{rgb}{.0,.0,.5}

%\renewcommand{\ftn}[1]{\relax}

%%%%%%%%%%%%%%%%%%%%%%%%%%%%%%%%%%%%%%%%%%%%%%%%%%%%%%%%%%%%%%%%%%%%%%%%%%%%%%%%
\allowdisplaybreaks

%%%%%%%%%%%%%%%%%%%%%%%%%%%%%%%%%%%%%%%%%%%%%%%%%%%%%%%%%%%%%%%%%%%%%%%%%%%%%%%%
\newcommand{\ttl}{\Large Boundary regularity of mixed local-nonlocal operators and its application}
\begin{document}
\title[Boundary regularity]
{\ttl}

\author{Anup Biswas, Mitesh Modasiya and Abhrojyoti Sen}

\address{
Department of Mathematics, Indian Institute of Science Education and Research, Dr. Homi Bhabha Road,
Pune 411008, India. Email: anup@iiserpune.ac.in; mitesh.modasiya@students.iiserpune.ac.in; abhrojyoti.sen@acads.iiserpune.ac.in }

\begin{abstract}
Let $\Omega$ be a bounded $C^2$ domain in $\Rn$ and
$u\in C(\Rn)$ solves 
\begin{equation*}
\begin{aligned}
%\begin{split}
\Delta u + a Iu + C_0|Du| \geq -K\quad \text{in}\; \Omega,
\quad
\Delta u + a Iu - C_0|Du|\leq K \quad \text{in}\; \Omega,
\quad
u=0\quad \text{in}\; \Omega^c,
%\end{split}
\end{aligned}
\end{equation*}
in viscosity sense, where $0\leq a\leq A_0$, $C_0, K\geq 0$, and $I$ is a suitable nonlocal operator.
We show that $u/\delta$ is in $C^{\upkappa}(\bar \Omega)$ for some $\upkappa\in (0,1)$, where $\delta(x)=\dist(x, \Omega^c)$. Using this result we also establish that 
$u\in C^{1, \gamma}(\bar\Omega)$. Finally, we apply these results
to study an overdetermined problem for mixed local-nonlocal operators.
\end{abstract}
\keywords{Operators of mixed order,  semilinear equation,
overdetermined problems, gradient estimate}
\subjclass[2010]{Primary: 35D40, 47G20, 35J61, 35B65 }

%%%%%%%%%%%%%%%%%%%%%%%%%%%%%%%%%%%%%%%%%%%%%%%%%%%%%%%%%%%%%%%%%%%%%%%%%%%%%%%%
\maketitle

%%%%%%%%%%%%%%%%%%%%%%%%%%%%%%%%%%%%%%%%%%%%%%%%%%%%%%%%%%%%%%%%%%%%%%%%%%%%%%%%

\section{Introduction and results}

We are interested in the integro-differential operator $L$  
of the form 
$$L u =\Delta u + a Iu,$$
where $0\leq a\leq A_0$, and $I$ is a nonlocal operator given by
\begin{align*}
Iu(x) &=  \int_{\Rn} \Big( u(x+y) - u(x) - \Ind_{\{|y|\leq 1\}}y \cdot \grad u(x) \Big)  k(y) \D{y}  
\\
&= \frac{1}{2} \int_{\Rn} (u(x+y) + u(x-y) - 2u(x) ) k(y) \D{y}
\end{align*}
for some nonnegative, symmetric kernel $k$, that is, $k(y)=k(-y)\geq 0$ for all $y$. Operator $L$ appears as a generator of a L\'{e}vy process which is obtained by superimposing a Brownian motion, running twice as fast as standard $n$-dimensional Brownian motion, and an independent pure-jump L\'evy process corresponding
to the nonlocal operator $aI$.
Throughout 
this article we impose the following assumptions on the kernel 
$k$.

\begin{assumption}\label{Assmp-1}
\hspace{-2em}
\begin{itemize}
\item[(a)] For some $\alpha \in (0,2)$ and a 
kernel $\widehat{k}$ we have that, for all $r\in (0, 1]$,
$$
r^{n + \alpha} k(r y) \leq \widehat{k}(y) \quad \text{for}\; y\in\Rn,
$$
and $\widehat{k}(y) = \frac{\Lambda}{\abs y^{n +\alpha}} \Ind_{B_1}(y) + J(y) \Ind_{B_1^c}(y)$, where $\Lambda>0$ and
$$\int_{B_1^c} J(y) \D{y} < \infty.$$

\item[(b)] There exists a $\beta>0$ such that
for any $r\in (0, 1]$ and $x_0 \in \Rn$
the following holds: for all $x,y \in B_{\frac{r}{2}}(x_0)$ and 
$z\in B_r^c(x_0)$ we have
$$
k(x-z) \leq  \varrho\, k(y-z)\quad \text{for}\; |y-z|< \beta,
$$
for some  $\varrho >1$.
\end{itemize}
\end{assumption}
\cref{Assmp-1}(a) will be used to study certain scaled operators 
and to find the exact behaviour of $I\delta(x)$ near the boundary
where $\delta(x)$ denotes the distance function from the complement 
$\Omega^c$. \cref{Assmp-1}(b)
will be used to apply the Harnack estimate from \cite{F09}. It should be noted that \cite{F09} uses a stronger hypothesis compared to \cref{Assmp-1}(b).
\cref{Assmp-1} is satisfied by a large class of nonlocal kernels
as shown in the examples below.

\begin{example}
The following class of nonlocal kernels satisfy \cref{Assmp-1}.
\begin{itemize}
\item[(i)] $k(y)= \frac{1}{|y|^{n+\alpha}}$ for some 
$\alpha\in (0,2)$. More generally, we may take
$k(y)= \frac{1}{|y|^{n+\alpha}}\Ind_{B}(y)$ for some
ball $B$ centered at the origin.
\item[(ii)] $k(y)\asymp \frac{\Psi(|y|^{-2})}{|y|^n }$ where 
$\Psi$ is Bernstein function vanishing at $0$. In particular, 
$\Psi$ is strictly increasing and concave. These class of nonlocal kernels
correspond to a special class of L\'evy processes, known as
subordinate Brownian motions (see \cite{SSV}). Assume that 
$\Psi$ satisfies a global weak scaling property with parameters
$\mu_1, \mu_2\in (0,1)$, that is,
$$\lambda^{\mu_1}\Psi(s)\lesssim \Psi(\lambda s)\lesssim \lambda^{\mu_2}\Psi(s)\quad \text{for}\; s>0, \lambda\geq 1.$$
Then it is easily seen that
\begin{align*}
r^{n+2\mu_2} k(ry)\asymp r^{2\mu_2}\frac{\Psi(|ry|^{-2})}{|y|^n}
\lesssim \frac{\Psi(|y|^{-2})}{|y|^n}
\lesssim \Ind_{B_1}(y) \frac{\Psi(1)}{|y|^{n+2\mu_2}}
+ \Ind_{B^c_1}(y) \frac{\Psi(1)}{|y|^{n+2\mu_1}}
\end{align*}
for all $r\in (0,1]$. Thus \cref{Assmp-1}(a) holds. Using the weak scaling property we can also check that \cref{Assmp-1}(b) holds.
\end{itemize}
\end{example}
Let $\Omega$ be a bounded $C^2$ domain in $\Rn$.
Let $u\in C(\Rn)$ be a viscosity solution to
\begin{equation}\label{Eq-1}
\begin{split}
L u + C_0 |Du| & \geq -K \; \quad \text{in}\; \Omega,
\\
Lu - C_0 |Du| & \leq K  \quad \quad\text{in}\; \Omega,
\\
u & =0 \; \quad \quad\text{in}\; \Omega^c,
\end{split}
\end{equation}
for some nonnegative constants $C_0, K$. Though the results in this article are obtained for viscosity solutions, the results are also 
applied for weak solutions, see \cref{R1.2} below for more details.
Our equations in \eqref{Eq-1} are motived by the operators of the form
\begin{equation}\label{HJB}
L u + H(Du, x):= L u + \inf_{\mu}\sup_{\nu}
\{b_{\mu,\nu}(x)\cdot Du(x) + f_{\mu,\nu}(x)\}=0
\quad \text{in}\; \Omega,\quad u=0\quad \text{in}\; \Omega^c.
\end{equation}
Such equations arise in the study of stochastic control problems
where the control can influence the dynamics only through the drift
$b_{\mu,\nu}$. 

Integro-differential operators involving both local and nonlocal operators
 have gained interest very recently. See for instance, 
 \cite{AR22,Barles08,Barles08a,BDVV,BDVV22,BVDV,
 FSZ22,GM02,GK22,M19,MZ21}.
For linear equations, one may recover up to the boundary $C^{1, \gamma}$ regularity of $u$ from the $W^{2, p}$ regularity (cf. \cite[Theorem~3.1.22]{GM02}). 
Recently, inspired by \cite{CS09,CS11}, 
 interior regularity of the solutions of \eqref{HJB} are studied in \cite{M19,MZ21}. Let us also mention the recent works \cite{DM22,GL22} where interior H\"{o}lder regularity of the
 gradient is established for the weak solutions of degenerate
 elliptic equations of mixed type. 
In this article, we are interested in up to the boundary regularity of the solutions. It should also be noted that we are dealing with
inequalities.

Our first result deals with the Lipschitz regularity of the solution.
\begin{theorem}\label{T2.1}
Suppose that \cref{Assmp-1}(a) holds and $u\in C(\Rn)$ is
a viscosity solution to \eqref{Eq-1}.
Then, for some constant $C$, dependent only on  $n, \Omega, C_0, A_0, \widehat{k}$, we have 
\begin{equation}\label{ET2.1A}
\norm{u}_{C^{0,1}(\Rn)} \leq C K.
\end{equation}
\end{theorem}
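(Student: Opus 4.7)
The plan is to establish \eqref{ET2.1A} in three conceptual steps: a global $L^\infty$ bound, a linear vanishing rate $|u(x)|\le CK\delta(x)$ at $\partial\Omega$, and an interior Lipschitz estimate. For the $L^\infty$ bound, enclose $\Omega$ in a ball $B_R(0)$ and use the smooth exponential barrier $v(x)=\lambda^{-2}(e^{\lambda R}-e^{\lambda x_1})$, truncated outside $B_{2R}$ so it lies in $C^2_b(\Rn)$. One has $\Delta v\le -e^{-\lambda R}$, $|Dv|\le C$, and \cref{Assmp-1}(a) at unit scale yields $|Iv|\le C$; picking $\lambda$ large forces $Lv-C_0|Dv|\le -1$ on $\Omega$. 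Viscosity comparison of $u$ with $\pm MKv$ for $M$ large then gives $\|u\|_{L^\infty(\Rn)}\le CK$, the extension from $\Omega$ to $\Rn$ being automatic since $u\equiv 0$ on $\Omega^c$.

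Next I would upgrade this to the linear boundary bound $|u(x)|\le CK\delta(x)$. Because $\Omega\in C^2$, the distance function $\delta$ is $C^2$ on a tubular strip $N_\rho=\{0<\delta<\rho\}$; extend $\delta$ to a nonnegative $\psi\in C^2(\bar\Omega)$ which coincides with $\delta$ on $N_\rho$ and is bounded below by a positive constant on $\Omega\setminus N_\rho$. On $N_\rho$ one has $|\Delta\psi|\le C$, $|D\psi|=1$, and the key computation is $|I\psi|\le C$: splitting the symmetric integrand at unit scale, the singular part is handled by \cref{Assmp-1}(a) together with the $C^2$ regularity of $\delta$ inside $N_\rho$, while the tail is bounded by $\|\psi\|_\infty\int_{B_1^c}\widehat k<\infty$. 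Setting $w=MK\psi$ with $M$ large makes $Lw-C_0|Dw|\le -K$ on $N_\rho$, while $w\ge\|u\|_\infty$ on $\{\delta=\rho\}$ by the previous step, so viscosity comparison yields $u\le w$ on $N_\rho$ and symmetrically $u\ge -w$, whence $|u|\le CK\delta$ on all of $\Omega$.

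For the interior Lipschitz estimate I would apply the doubling-of-variables method. On $\bar\Omega\times\bar\Omega$ consider
\[
\Phi(x,y)=u(x)-u(y)-L\,\omega(|x-y|)-\tfrac{\varepsilon}{2}\bigl(|x|^2+|y|^2\bigr),
\]
where $\omega\in C^2((0,\infty))$ is strictly concave with $\omega(r)\asymp r$ and $\omega''(r)\to-\infty$ as $r\to 0^+$, and assume for contradiction that $\sup\Phi>0$ for $L=C_*K$ large and $\varepsilon$ small. The $L^\infty$ bound forces $|\hat x-\hat y|$ small at the maximiser $(\hat x,\hat y)$, and Step~2 forces $\hat x,\hat y$ to lie at positive distance from $\partial\Omega$. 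The nonlocal Crandall--Ishii lemma then produces jets $(p,X)$, $(q,Y)$ with $\trace(X-Y)\le 2L\,\omega''(|\hat x-\hat y|)$, and substituting into the two inequalities of \eqref{Eq-1} and subtracting gives, modulo $\sorder_\varepsilon(1)$,
\[
2L\,\omega''(|\hat x-\hat y|)+a\bigl(Iu(\hat x)-Iu(\hat y)\bigr)+2C_0L\,\omega'(|\hat x-\hat y|)+2K\ge 0.
\]
The main obstacle is the nonlocal jet difference: using the inequality $\Phi(\hat x\pm z,\hat y\pm z)\le\Phi(\hat x,\hat y)$ together with the symmetric form of $I$, the singular part is bounded via \cref{Assmp-1}(a) and the tail via the $L^\infty$ estimate from Step~1, which keeps it $\order(1)$. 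Balancing the modulus so that $2L|\omega''|$ dominates $2C_0L\omega'+2K+\order(1)$ produces the contradiction, and hence $|u(x)-u(y)|\le CK|x-y|$ on $\bar\Omega$, which combined with the boundary bound extends to all of $\Rn$.
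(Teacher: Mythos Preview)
Your Step~2 contains a genuine gap: the claim $|I\psi|\le C$ on the strip $N_\rho$ is false for $\alpha\ge 1$. Any barrier $\psi$ that equals $\delta$ near $\partial\Omega$ from inside and is nonnegative on $\Omega^c$ (which you need so that $MK\psi\ge u=0$ there for the nonlocal comparison) is only Lipschitz across $\partial\Omega$; the second-order Taylor bound $|\psi(x+y)+\psi(x-y)-2\psi(x)|\le C|y|^2$ therefore holds only for $|y|\lesssim\delta(x)$, and the intermediate range $\delta(x)\lesssim|y|\le 1$ contributes
\[
\int_{\delta(x)\lesssim|y|\le 1}\min\{1,|y|\}\,\widehat{k}(y)\,\D y\ \sim\ \delta(x)^{1-\alpha}\qquad(\alpha>1),
\]
which diverges as $x\to\partial\Omega$ (and gives $|\log\delta(x)|$ when $\alpha=1$); this is exactly the computation behind \cref{L2.6}. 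For the half-space model $\delta=(x_1)^+$ one checks directly that $I\delta(x)>0$ with $I\delta(x)\sim\delta(x)^{1-\alpha}$, so $L(MK\psi)$ is large \emph{positive} near $\partial\Omega$ and no choice of $M$ produces $L(MK\psi)-C_0|D(MK\psi)|\le -K$. The paper circumvents this by using an exterior-sphere barrier from \cite[Lemma~5.4]{M19}: a function $\varphi$ vanishing on a ball $B_{r_\circ}(z)\subset\Omega^c$ tangent to $\partial\Omega$, $C^2$ in the annulus $B_{(1+\sigma)r_\circ}(z)\setminus\bar{B}_{r_\circ}(z)$, and satisfying $L\varphi+C_0|D\varphi|\le -1$ there. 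Your Step~1 barrier has a milder version of the same defect: with $v=\lambda^{-2}(e^{\lambda R}-e^{\lambda x_1})$ one has $\|D^2 v\|_\infty\sim e^{\lambda R}$, hence $|Iv|$ grows with $\lambda$ while the useful bound $\Delta v\le -e^{-\lambda R}$ decays to $0$, so ``$\lambda$ large'' does not force $Lv-C_0|Dv|\le -1$; the paper instead cites the tailored barrier of \cite[Lemma~5.5]{M19}.

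Your Step~3 is a different route from the paper's, which rescales $v(y)=u(ry)$ with $4r=\delta(x)\wedge 1$, cuts off, controls the tail of the rescaled nonlocal term via the linear boundary bound, and then invokes the interior $C^{1}$ estimate \cite[Theorem~4.1]{MZ21}. The Ishii--Lions doubling is a reasonable alternative, but note that ruling out boundary maximisers $(\hat x,\hat y)$ hinges precisely on the linear bound $|u|\le CK\delta$ from Step~2, so the whole argument is blocked until that step is repaired with a correct barrier.
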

Proof of \cref{T2.1} is based on two standard ingredients, interior
estimates from \cite{MZ21} and barrier function. It can be easily shown that $\dist(\cdot, \Omega^c)$ gives a barrier function at
the boundary (cf. \cref{L2.1}). 
Next we investigate finer regularity property of $u$ near $\partial\Omega$.
Let $\delta(x)=\dist(x, \Omega^c)$  be the distance function from the boundary. Modifying $\delta(x)$ inside $\Omega$, if required, we may assume that $\delta\in C^2(\bar\Omega)$ 
(cf. \cite[Theorem~5.4.3]{DZ11}).
Our next result establishes H\"{o}lder regularity of $u/\delta$ up to the boundary.

\begin{theorem}\label{T2.2}
Suppose that \cref{Assmp-1} holds. Let $u$ be a viscosity
solution to \eqref{Eq-1}.
 Then there exists 
$\upkappa\in (0,(2-\alpha)\wedge 1)$ such that 
\begin{equation}\label{BMS01}
\norm{u/\delta}_{C^{\upkappa}(\bar{\Omega})}\leq C_1 K,
\end{equation}
for some constant $C_1$, where
$\upkappa, C_1$ depend on $n, C_0, A_0, \widehat{k}, \Omega$.
\end{theorem}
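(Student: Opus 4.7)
The plan is to prove \cref{T2.2} by a boundary improvement-of-approximation argument in the spirit of Krylov, Caffarelli, and Ros-Oton--Serra, adapted to the mixed local--nonlocal setting. After scaling it suffices to take $K=1$. From \cref{T2.1} we have $\|u\|_{C^{0,1}(\Rn)}\le C$, hence $|u|\le C\delta$ in $\Omega$ and $u/\delta$ is bounded. Because $\delta\in C^{2}(\bar\Omega)$, $\Delta\delta$ is bounded in $\Omega$; combined with the boundedness of $I\delta$ flagged by the authors after \cref{Assmp-1}(a), this says that $L\delta$ is bounded in $\Omega$, which is the crucial auxiliary estimate making $\delta$ a legitimate approximating profile for $u$ at the boundary.

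The heart of the argument is an iteration lemma asserting that there are universal constants $\rho\in(0,1)$, $\upkappa\in(0,(2-\alpha)\wedge 1)$ and $M>0$ such that, for every $x_0\in\partial\Omega$, every scale $r\le r_0$, and every $Q\in\RR$,
$$\sup_{B_r(x_0)\cap\Omega}|u-Q\delta|\le M r^{1+\upkappa}\;\Longrightarrow\;\exists\, Q'\in\RR,\ |Q'-Q|\le C M r^{\upkappa},\ \sup_{B_{\rho r}(x_0)\cap\Omega}|u-Q'\delta|\le M (\rho r)^{1+\upkappa}.$$
Iterating at the dyadic scales $r_k=\rho^k r_0$ produces a Cauchy sequence $Q_k\to Q(x_0)$ with $|Q_k - Q(x_0)|\le C r_k^{\upkappa}$. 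Since $\partial\Omega$ is $C^{2}$, $\delta(x)\asymp\dist(x,x_0)$ along inward normals, so $u/\delta$ extends continuously to $\partial\Omega$ with value $Q(x_0)$ and is $\upkappa$-H\"older at $x_0$ uniformly in $x_0$. Gluing this boundary estimate to the interior H\"older estimates of \cite{F09,MZ21} applied to $u$ on balls of radius $\delta(x)/4$ (through the standard dichotomy $|x-y|\gtrless\delta(x)\wedge\delta(y)$) yields the global bound \eqref{BMS01}.

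To prove the iteration lemma one would rescale: define $v(y)=r^{-(1+\upkappa)}\bigl(u(x_0+ry)-Q\delta(x_0+ry)\bigr)$ on the rescaled domain $\Omega_r=r^{-1}(\Omega-x_0)$. Then $|v|\le M$ on $B_1\cap\Omega_r$, and the tail of $v$ grows at most linearly by the Lipschitz bound on $u$ and $\delta$. The function $v$ satisfies extremal inequalities for the rescaled operator $r^{2}\Delta + a r^{2} I_r$ whose kernel $r^{n+\alpha}k(r\cdot)$ is dominated by $\widehat k$ uniformly in $r\in(0,1]$ by \cref{Assmp-1}(a); the right-hand side is bounded since $L\delta$ is bounded and the drift and forcing enter with the favourable factors $r^{1-\upkappa}$ and $r^{2-\upkappa}$. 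Applying the weak-Harnack/oscillation-decay estimate available from \cref{Assmp-1}(b) and \cite{F09} to $v$ and to its negative, and then subtracting the optimal multiple of the rescaled $\delta$ to absorb the normal-derivative-like slope, reduces the oscillation on $B_\rho$ by a factor strictly less than $1$; choosing $\upkappa$ small enough that $\rho^{\upkappa}$ exceeds this factor closes the induction.

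The main obstacle is the proof of the iteration lemma, specifically the uniform control of the nonlocal tail of $v$ and the uniformity in $r$ of the Harnack constants. One has to verify that \cref{Assmp-1} is preserved under $k\mapsto r^{n+\alpha}k(r\cdot)$ with constants independent of $r\in(0,1]$, so that the iteration produces universal constants; the restriction $\upkappa<2-\alpha$ arises precisely to make the tail integral involving $|y|^{1+\upkappa}\widehat k(y)\,\D y$ convergent and the rescaled nonlocal contribution to $I_r v$ absorbable. The presence of drift and of extremal in-equations (rather than a single equation) forces reliance on the ABP/Harnack framework of \cite{F09,MZ21} in place of energy arguments, but the mechanics of the iteration, including the selection of the new slope $Q'$ via an $L^\infty$ projection onto the line $\mathbb{R}\cdot\delta$, are classical once these inputs are in place.
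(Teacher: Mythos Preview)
Your overall architecture — dyadic iteration in which $u$ is approximated near each boundary point by a multiple $Q\delta$, followed by interior/boundary gluing — matches the paper's strategy (their \cref{P2.1} is exactly an oscillation-decay for $u/\delta$ at dyadic scales, and the final proof of \cref{T2.2} is the same kind of dichotomy argument you describe). However, there is a genuine gap in the heart of your argument.

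You assert that ``$L\delta$ is bounded in $\Omega$'', and you base the iteration lemma on this: ``the right-hand side is bounded since $L\delta$ is bounded''. This is false when $\alpha\in[1,2)$. The paper says explicitly (in the paragraph following the statement of \cref{T2.2}) that ``for $\alpha\in[1,2)$, $I\delta$ becomes singular near $\partial\Omega$'', and \cref{L2.6} computes the precise blow-up:
\[
|L\delta(x)|\;\lesssim\;
\begin{cases}
\delta(x)^{1-\alpha} & \alpha>1,\\
|\log\delta(x)|+1 & \alpha=1,\\
1 & \alpha<1.
\end{cases}
\]
Consequently, for your rescaled function $v(y)=r^{-(1+\upkappa)}\bigl(u(x_0+ry)-Q\,\delta(x_0+ry)\bigr)$ on $B_1\cap\Omega_r$, the forcing term is not of size $r^{1-\upkappa}$ but rather, for $\alpha>1$,
\[
r^{1-\upkappa}\,|Q|\,|L\delta|(x_0+ry)\;\asymp\;|Q|\,r^{2-\alpha-\upkappa}\,\bigl(\dist(y,\partial\Omega_r)\bigr)^{1-\alpha},
\]
which is singular in $y$ as $y\to\partial\Omega_r$ regardless of how small $\upkappa$ is. So a weak-Harnack/oscillation estimate with bounded right-hand side, as you propose, is not applicable in the range $\alpha\ge 1$.

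This is precisely the technical obstacle the paper isolates and overcomes. In \cref{L2.4} they build, for each small $r$, a supersolution $\Phi_r$ satisfying $L\Phi_r\le -r^{-2}\Theta(\delta/r)$, where $\Theta(\xi)\asymp \xi^{1-\alpha}$ (resp.\ $|\log\xi|$), so that the barrier absorbs the singular right-hand side $C_2\,g(x)$; this yields the replacement for your ``bounded RHS'' step, with error $C_2 R^{\hat\alpha}$, $\hat\alpha=2-\alpha$, rather than $C_2 R$. The constraint $\upkappa<2-\alpha$ in the paper then arises from balancing this $R^{\hat\alpha}$ against the oscillation $4^{-k\uptau}$ in the induction (see \eqref{EP2.1K}--\eqref{EP2.1L}), not from a tail-integrability condition on $|y|^{1+\upkappa}\widehat k(y)$ as you suggest. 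In the induction one also has to control $L(u_k^-)$ for $u_k=G^{-1}u-m_k\delta$ outside $B_{R_k}$, which again produces a contribution of order $R_k^{1-\alpha}$ (see \eqref{EP2.1H}); this is another place where your ``bounded RHS'' assumption breaks down.

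In short: for $\alpha<1$ your sketch is essentially correct and coincides with the paper's easy case; for $\alpha\ge 1$ the missing ingredient is the singular-barrier \cref{L2.4} (and the matching Harnack \cref{L2.5} with right-hand side $C_2 g$), without which the iteration cannot close.
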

The regularity of $\partial\Omega$ in the above result can be relaxed to $C^{1,1}$. See \cref{R2.3} for more detail.
For elliptic operators similar estimate is obtained by 
Krylov \cite{K83}. Boundary estimate for fractional Laplacian 
operators are studied by Ros-Oton and Serra in \cite{RS14,RS16,RS17}. Result of \cite{RS14} has been
extended for nonlocal operators
with kernel of variable orders by Kim et al. \cite{KKLL}
whereas extension to the fractional $p$-Laplacian operator can be found
in \cite{IMS}.
For the proof of \cref{T2.2} we follow the approach of \cite{RS14}
which is inspired by a method of Caffarelli \cite[p.~39]{Kaz}.
A key step in this analysis is the oscillation lemma (see \cref{P2.1}) for $u/\delta$ which involves computation of
$L((u- \kappa \delta)^+)$ for some suitable constant $\kappa$.
Note that, by 
\cref{T2.1}, $Iu$ is bounded in $\Omega$ for $\alpha\in (0,1)$, and therefore, in this case we can follow standard approach of local operators to get the estimate \eqref{BMS01}. But for $\alpha\in[1,2)$, $I\delta$ becomes singular near $\partial \Omega$.
So we have to do several careful estimates to apply the method of 
\cite{RS14}.

Using \cref{T2.2} we establish boundary regularity of $Du$
(compare it with Fall-Jarohs \cite{FJ21}). This is
the content of our next result.
\begin{theorem}\label{T2.3}
Let \cref{Assmp-1} hold.
There exist constants $\gamma, C$, dependent on $\Omega, C_0, A_0, n,\widehat{k}$, such that for any solution $u$ of \eqref{Eq-1} 
we have
\begin{equation}\label{ET2.3A}
\norm{u}_{C^{1, \gamma}(\bar\Omega)}\leq CK.
\end{equation}
\end{theorem}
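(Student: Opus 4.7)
The plan is to promote Theorem~\ref{T2.2} to the full $C^{1,\gamma}$ estimate by combining it with the interior $C^{1,\gamma'}$ regularity for mixed local–nonlocal equations established in \cite{MZ21}, adapting the boundary blueprint of Ros-Oton and Serra \cite{RS14,RS16} to our mixed setting. Theorem~\ref{T2.2} provides a H\"older-continuous ``normal-derivative profile'' along $\partial\Omega$: setting $\kappa(z) := (u/\delta)(z)$ for $z \in \partial\Omega$, one has $\|\kappa\|_{C^{\upkappa}(\partial\Omega)} \leq C_1 K$, and for every $x \in \Omega$ with nearest boundary point $\bar x$,
\begin{equation*}
|u(x) - \kappa(\bar x)\,\delta(x)| = \delta(x)\bigl|(u/\delta)(x) - (u/\delta)(\bar x)\bigr| \leq C_1 K\,\delta(x)\,|x-\bar x|^{\upkappa}.
\end{equation*}
Thus $\kappa(\bar x)D\delta(\bar x)$ is the natural candidate for $Du$ at $\bar x$.

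For any $x_0 \in \Omega$ with $d := \delta(x_0)$ small, let $\bar x_0$ denote the nearest boundary point and set $w := u - \kappa(\bar x_0)\,\delta$. The display above yields $\|w\|_{L^\infty(B_{d/2}(x_0))} \leq CK\,d^{1+\upkappa}$, while globally $\|w\|_{L^\infty(\Rn)} \leq CK$ by Theorem~\ref{T2.1}. I would then rescale to the unit ball via $\tilde w(y) := (d/2)^{-(1+\upkappa)}\,w(x_0 + (d/2)y)$, so that $\|\tilde w\|_{L^\infty(B_1)} \leq CK$ with a globally controlled tail. \cref{Assmp-1}(a) ensures that the rescaled kernel $\tilde k(y) := (d/2)^{n+\alpha}\,k((d/2)y)$ is dominated by the fixed admissible kernel $\widehat{k}$, so the interior $C^{1,\gamma'}$ estimate of \cite{MZ21} applies to $\tilde w$ with constants independent of $d$. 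Scaling back produces, for any $\gamma < \upkappa \wedge \gamma'$,
\begin{equation*}
|Du(x_0) - \kappa(\bar x_0)\,D\delta(x_0)| \leq CK\,d^{\upkappa}, \qquad [Du]_{C^{\gamma}(B_{d/4}(x_0))} \leq CK.
\end{equation*}
A standard dichotomy over pairs $x,y \in \bar\Omega$ (the cases $|x-y| \leq \tfrac12 \min(\delta(x),\delta(y))$ versus the opposite), combined with the $C^{\upkappa}$-continuity of $\kappa$ along $\partial\Omega$ and the smoothness of $D\delta$ on $\bar\Omega$, glues these local estimates into the global bound \eqref{ET2.3A}.

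The principal obstacle is that $w$ does not solve the same in-equation as $u$: indeed $Lw = Lu - \kappa(\bar x_0)(\Delta\delta + aI\delta)$, and while $\Delta\delta$ is bounded on $\bar\Omega$, the nonlocal term $I\delta(x)$ is singular as $x \to \partial\Omega$ whenever $\alpha \in [1,2)$, with typical growth $|I\delta(x)| \lesssim \delta(x)^{1-\alpha}$---precisely the phenomenon that already made the proof of Theorem~\ref{T2.2} delicate. The rescaling chosen above is calibrated so that, in conjunction with the constraint $\upkappa < 2-\alpha$ built into Theorem~\ref{T2.2}, these problematic sources contribute only a controlled right-hand side to the rescaled equation for $\tilde w$: the $\Delta\delta$ piece picks up a prefactor of $d^{1-\upkappa}$ and the $I\delta$ piece a prefactor of $d^{2-\upkappa-\alpha}$, both with strictly positive exponents. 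Once this absorption is carried out, the remainder of the argument is a routine combination of the boundary profile $\kappa$ coming from Theorem~\ref{T2.2} with the interior estimates of \cite{MZ21}.
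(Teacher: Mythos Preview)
Your approach is correct and is a genuine alternative to the paper's. Both routes rest on the same three ingredients---Theorem~\ref{T2.2}, the interior $C^{1,\eta}$ estimate from \cite{MZ21} applied after rescaling, and a near/far dichotomy---but they organise the computation differently.

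The paper does \emph{not} subtract a boundary profile. Instead it works directly with $v=u/\delta$: a product-rule computation (Lemma~2.7) shows that $v$ solves
\[
Lv + 2\tfrac{D\delta}{\delta}\cdot Dv \;=\; \tfrac{1}{\delta}\bigl[\,Lu - vL\delta - Z[v,\delta]\,\bigr]
\quad\text{in }\Omega,
\qquad
Z[v,\delta](x)=\int_{\Rn}(v(y)-v(x))(\delta(y)-\delta(x))\,k(y-x)\,\D y.
\]
Lemma~2.8 then rescales this equation about $x_0\in\Omega_\sigma$ and, using the $C^{\upkappa}$ bound on $v$ to control the cross-term $Z[v,\delta]$ together with Lemma~2.6 for $L\delta$, applies \cite{MZ21} to obtain $|Dv|\lesssim K\sigma^{\upkappa-1}$ and $[Dv]_{C^\eta(B_{\sigma/8}(x_0))}\lesssim K\sigma^{\upkappa-1-\eta}$. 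The final step writes $Du=vD\delta+\delta Dv$ and runs the dichotomy on $\vartheta=\delta Dv$.

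Your route is the ``subtract the linear boundary expansion'' strategy of \cite{RS16}: at each $x_0$ you pass to $w=u-\kappa(\bar x_0)\delta$, whose equation only picks up the additive source $\kappa(\bar x_0)L\delta$ rather than the bilinear term $Z[v,\delta]$. This avoids the product-rule lemma entirely and makes the right-hand side analysis slightly cleaner; the price is that the subtraction, and hence the equation, is $x_0$-dependent. The one point you should make explicit is the tail control you allude to: since $|w(x)|\le CK\delta(x)$ globally and $w=0$ in $\Omega^c$, the change of variables $y'=ry$ shows $r^{2-\alpha}|I_r((1-\varphi)\tilde w)|\lesssim K(r^{2-\upkappa-\alpha}+r^{1-\upkappa})$, both exponents being positive by $\upkappa<(2-\alpha)\wedge 1$; using Assumption~1(a) in the form $r^{n+\alpha}k(ry)\le\widehat k(y)$ before changing variables gives a suboptimal $r^{1-\alpha-\upkappa}$ in the $|y|>1$ range, so this step needs the same care as in the proof of Theorem~\ref{T2.1}. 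Also take $r=d/4$ rather than $d/2$ so that the rescaled equation is available on $B_2$ for the cutoff argument.
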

\begin{remark}
By the dependency of the constants in \cref{T2.1,T2.2,T2.3} on 
$\widehat{k}$ we mean the dependency on $\alpha, \Lambda$ and
$\int_{|y|\geq 1} J(y)\D{y}$.
\end{remark}
To cite a specific application of the above results, 
let us consider $u, v\in C(\Rn)$ satisfying 
\begin{align*}
Lu + H_1(Du, x) &=0, \quad \text{in}\; \Omega,\quad u=0\quad\text{in}\; \Omega^c,
\\
Lv + H_2(Dv, x) &=0, \quad \text{in}\; \Omega,\quad v=0\quad\text{in}\; \Omega^c,
\end{align*}
respectively. If $|H_1(p, x)-H_2(q, x)|\leq C_0 |p-q| +K$
for all $p, q\in\Rn$ and $x\in \Omega$, then using the interior
regularity of $u, v$ from \cite{MZ21} and the coupling result
\cite[Theorem~5.1]{BM21} it can be easily seen that $w=u-v$ satisfies
\eqref{Eq-1}. Our result \cref{T2.3} then gives a $C^{1,\gamma}$
estimate of $w$ up to the boundary. The above results 
can also be used to establish anti-maximum principle for the
generalized principal eigenvalues of nonlinear operators of the form \eqref{HJB} (cf. \cite{B20,BL21,CP79}).

\begin{remark}\label{R1.2}
 Though the above results are mentioned for viscosity solutions, Theorem~\ref{T2.1}-\ref{T2.3} can also be applied for weak solutions (at least for equations). To see this, let us assume that
$\Omega$ be a $C^{2, \kappa}$ domain, $\kappa\in (0, 1)$.
Suppose that
for some given Lipschitz function $f:\Rn\to\RR$, there exists a unique weak solution 
$u\in H^1_0(\Omega)$ to 
\begin{equation}\label{E1.6}
Lu + f(Du)= g\quad \text{in}\; \Omega, \quad u=0\quad \text{in}\; \Omega^c,
\end{equation}
for every $g\in L^\infty(\Omega)$.
Now consider a sequence of smooth mollifications $g_\varepsilon$ of $g$ such that $\sup_{\Omega}|g_\varepsilon-g|\to 0$, as $\varepsilon\to 0$. Let $u_\varepsilon$ be the unique weak solution to \eqref{E1.6} corresponding to $g_\varepsilon$. Since, by Sobolev 
embedding $u_{x_i}\in L^p(\Omega)$ for $p\in [1,\frac{2n}{n-2}]$,
applying \cite[Theorem~3.1.22]{GM02} and a bootstrapping argument we 
obtain that for some $p>n$
\begin{equation}\label{E1.7}
\norm{u_\varepsilon}_{W^{2,p}(\Omega)}\leq C\left(1+ \norm{Du_\varepsilon}_{L^2(\Omega)} + \norm{g_\varepsilon}_{L^\infty(\Omega)}\right),
\end{equation}
for some constant $C$ independent of $u_\varepsilon$. This, of course, implies $u_\varepsilon\in C^{1, \gamma}(\bar\Omega)$. Applying
\cite[Theorem~3.1.12]{GM02} we have 
$u_\varepsilon\in C^{2, \gamma}(\bar\Omega)$ and therefore, $u_\varepsilon$ is a viscosity solution to \eqref{E1.6} when $g$ is replaced by $g_\varepsilon$. Hence we can apply Theorem~\ref{T2.1}-\ref{T2.3} on $u_\varepsilon$. In particular, 
$$
\sup_{\varepsilon\in (0,1)}\norm{Du_\varepsilon}_{L^\infty(\Omega)}<\infty.
$$ 
Now, using the stability estimate \eqref{E1.7}, we can pass the limit, as $\varepsilon\to 0$, to show that $u_\varepsilon\to u$ where $u$ is the weak solution to \eqref{E1.6} with data $g$ and $u$ also satisfies the estimates in Theorem~\ref{T2.1}-\ref{T2.3}.

\end{remark}

Next we apply \cref{T2.2,T2.3} to study an overdetermined problem.
More precisely, we consider a solution $u$ to the problem
\begin{equation}\label{BMS02}
\begin{split}
Lu + H(|D u|) & =f(u)\quad  \text{in} \; \Omega,
\\
u=0\quad \text{in} \;  \Omega^c,\quad u&>0\quad \text{in}\; \Omega,
\quad \frac{\partial u}{\partial \rm n}=c\quad \text{on}\; \partial \Omega,
\end{split}
\end{equation}
where ${\rm n}$ is the unit inward normal and $H:\RR\to\RR, f:\Rn\to\RR$ are locally Lipschitz. In \cref{T3.1} we show that $\Omega$ must be ball, provided the nonlocal kernel $k$ satisfies certain conditions. Overdetermined problem was first studied in the celebrated work of Serrin \cite{S71} where it was shown that
if there exists a positive solution to
$$-\Delta u =1\quad \text{in}\; \Omega, 
\quad u=0,\; \frac{\partial u}{\partial \rm n}=c\quad \text{on}\; \partial \Omega,$$
then $\Omega$ must be a ball. Serrin's work has been generalized
for a vast class of operators, see for instance, 
\cite{BJ20,BL21,CiSl09,FJ15,FK08,FMW17,FV10,SV19}. In this article, we follow the 
method of \cite{BJ20,FJ15} to establish our result on overdetermined problem concerning \eqref{BMS02}.

The rest of the article is organized as follows: In the next
\cref{Proofs} we provide the proofs of \cref{T2.1,T2.2,T2.3} and
\cref{Overdetermined} discusses the overdetermined problems.

\section{Proofs of \cref{T2.1,T2.2,T2.3}}\label{Proofs}

We begin by showing that $\delta$ is a barrier function to $u$
at the boundary.  
\begin{lemma}\label{L2.1}
Suppose that \cref{Assmp-1}(a) holds and $u$ be a viscosity solution
to \eqref{Eq-1}.
Then there exists a constant 
$C$, dependent only on $n, A_0, C_0, \diam(\Omega)$, radius of exterior
sphere and 
$\int_{\Rn}(|y|^2\wedge 1) \widehat{k}(y)\D{y}$, such that
\begin{equation}\label{EL2.1A}
|u(x)|\leq C K\delta(x)\quad \text{for all}\; x\in \Omega,
\end{equation}
where $\delta(x)=\dist(x, \Omega^c)$.
\end{lemma}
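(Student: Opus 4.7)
The plan is to build an explicit supersolution $\Psi \approx K\delta$ near the boundary using the uniform exterior sphere condition, and then invoke viscosity comparison. Since $\Omega$ is a bounded $C^2$ domain, there is $r_0>0$ such that at every $x_0\in\partial\Omega$ one can find $y_0\in\Omega^c$ with $B_{r_0}(y_0)\subset\Omega^c$ and $x_0\in\partial B_{r_0}(y_0)$. Fix a radius $r_1\in(r_0,2r_0)$ (to be chosen small in terms of $n$ and $C_0$) and set
\[
\Psi(x)\df MK\,\eta(x)\bigl(|x-y_0|^2-r_0^2\bigr)^{+},
\]
where $\eta\in C_c^\infty(\RR^n)$ equals $1$ on $B_{r_1/2}(y_0)$ and is supported in $B_{r_1}(y_0)$, and $M>0$ is a constant fixed last. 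Then $\Psi\geq 0$ everywhere, $\Psi=0$ on $\partial\Omega\cap B_{r_1/2}(y_0)$, and $\Psi\in C^{1,1}(\RR^n)$.

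In the smooth quadratic region $\Omega\cap B_{r_1/2}(y_0)$ we have $\Delta\Psi = 2nMK$ and $|D\Psi|\leq 2MKr_1$. For the nonlocal piece, \cref{Assmp-1}(a) applied at $r=1$ gives $k(y)\leq\widehat k(y)$, so using $\|D^2\Psi\|_\infty\leq c_1 MK$ (on the smooth pieces) and the uniform bound $\|\Psi\|_\infty\leq c_2 MK$ coming from the support of $\eta$, one obtains
\[
|I\Psi(x)|\leq \tfrac12\|D^2\Psi\|_\infty\!\int_{|y|\leq 1}|y|^2\widehat k(y)\,\D y + 2\|\Psi\|_\infty\!\int_{|y|>1}\widehat k(y)\,\D y \leq c_I MK,
\]
with $c_I$ depending only on $n,\,r_1$ and $\int_{\RR^n}(|y|^2\wedge 1)\widehat k(y)\D y$. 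Combining these,
\[
L\Psi - C_0|D\Psi|\geq MK\bigl(2n - A_0 c_I - 2C_0 r_1\bigr).
\]
Choosing $r_1$ small enough that $2C_0 r_1 < n$, and then $M$ large enough so that $M(2n-A_0 c_I-2C_0 r_1)\geq 1$, $\Psi$ becomes a classical supersolution of $Lv - C_0|Dv| = K$ in $\Omega\cap B_{r_1/2}(y_0)$.

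A preliminary $L^\infty$ bound $\|u\|_\infty\leq c_0 K$ comes from a global quadratic barrier on a ball containing $\Omega$. Enlarging $M$ further so that $\Psi\geq \|u\|_\infty$ on $\partial B_{r_1/2}(y_0)\cap\Omega$, and observing that $\Psi\geq 0=u$ on the rest of the boundary of $\Omega\cap B_{r_1/2}(y_0)$ and on $\Omega^c$, viscosity comparison for the mixed operator (applicable since $u$ is a subsolution and $\Psi$ a classical supersolution of $Lv-C_0|Dv|=K$, writing $|Dv|=\sup_{|b|\leq 1}b\cdot Dv$ if needed) yields $u\leq\Psi$ in $\Omega\cap B_{r_1/2}(y_0)$. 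Now for $\tilde x\in\Omega$ close to $\partial\Omega$, applying the construction at its nearest boundary point $x^*$ with $y_0 = x^*-r_0\nu(x^*)$ gives $|\tilde x-y_0| = r_0 + \delta(\tilde x)$, whence
\[
\Psi(\tilde x)\leq MK\bigl((r_0+\delta(\tilde x))^2-r_0^2\bigr)\leq MK\bigl(2r_0+\diam\Omega\bigr)\delta(\tilde x).
\]
Interpolating with the $L^\infty$ bound on points where $\delta$ is bounded below gives $u(x)\leq CK\delta(x)$ on all of $\Omega$; applying the same argument to $-u$, which satisfies the same pair of inequalities in \eqref{Eq-1} after the interchange $u\leftrightarrow -u$, provides the matching lower bound and thus \eqref{EL2.1A}.

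The principal obstacle is the uniform estimate of $I\Psi$, since $\Psi$ is only $C^{1,1}$ and the integrand changes sign. The decomposition displayed above is dictated by the structure of $\widehat k$: the singular near-origin part $\Lambda|y|^{-n-\alpha}\Ind_{B_1}$ is paired with the $C^{1,1}$-Taylor bound on $\Psi$, yielding a finite contribution precisely because $\alpha<2$, while the tail $J\,\Ind_{B_1^c}$ is absorbed into $\|\Psi\|_\infty$ via the integrability of $J$ away from the origin. The dependence of the constant $C$ on $\int(|y|^2\wedge 1)\widehat k(y)\D y$ asserted in the statement is exactly this combined moment.
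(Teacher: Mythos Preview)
Your barrier has the wrong sign. In the region $\Omega\cap B_{r_1/2}(y_0)$ where $\eta\equiv 1$ and $|x-y_0|>r_0$, you have $\Psi(x)=MK(|x-y_0|^2-r_0^2)$ and hence $\Delta\Psi=2nMK>0$. A function with positive Laplacian is a \emph{sub}solution, not a supersolution. To bound the subsolution $u$ (which satisfies $Lu+C_0|Du|\geq -K$) from above by $\Psi$ via comparison, you need $L\Psi+C_0|D\Psi|\leq -K$, which forces $L\Psi$ to be very negative; your computation instead drives $L\Psi-C_0|D\Psi|$ to be large and positive. Concretely, at a putative interior maximum of $u-\Psi$ you would get $L(u-\Psi)\leq 0$ and $Du=D\Psi$, and plugging into either inequality of the system yields no contradiction. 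The paper's barrier (imported from \cite[Lemma~5.4]{M19}) is built from a concave profile so that $L\varphi+C_0|D\varphi|\leq -1$ in the annulus; any correct construction must share this feature. The attempt to rescue the argument by choosing $M$ large is also circular: the quantity $2n-A_0c_I-2C_0r_1$ does not depend on $M$, and there is no reason it should even be positive for large $A_0$.

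There is a second, independent gap that is specific to the nonlocal setting. Comparison on the subdomain $\Omega\cap B_{r_1/2}(y_0)$ requires $u\leq\Psi$ on \emph{all} of its complement in $\Rn$, not merely on its topological boundary. Your $\Psi$ is supported in $B_{r_1}(y_0)$ and vanishes on the rest of $\Omega$, where $u$ may well be positive; so even with a correctly signed barrier, the exterior data would not be in order. The paper handles this by taking a barrier that is bounded below by a positive constant outside a slightly larger ball, so that after scaling by the global $L^\infty$ bound it dominates $u$ everywhere in the complement of the comparison region.
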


\begin{proof}
We first show that 
\begin{equation}\label{EL2.1B}
\abs{u(x)}\leq \kappa\, K \quad x\in \Rn,
\end{equation}
for some constant $\kappa$. From \cite[Lemma 5.5]{M19} we can find a non-negative function 
$\chi\in C^2(\bar{\Omega})\cap C_b(\Rn)$, with $\inf_{\Rn}\chi>0$, satisfying
$$
L \chi  + C_0 |D \chi |  \leq -1\quad \text{in}\;\,\, \Omega.
$$
Note that, since $\chi\in C^2(\bar{\Omega})$,
the above equation holds in the classical sense. 
Defining $\psi  = 2 (K+\varepsilon) \chi $, $\varepsilon>0$, we have that $\inf_{\Rn}\psi>0$ and
\begin{equation}\label{EL2.1C}
L \psi  + C_0 |D \psi |  \leq -2(K+\varepsilon) \quad \text{in}\;\,\, \Omega.
\end{equation}
We claim that $u \leq \psi$ in $\Rn$. Suppose, on the contrary, that $(u - \psi)(z) >0$ at some point in $z\in \Omega$.
Define
$$\theta = \inf \{ t : u \leq t+\psi \; \text{in}\; \Rn	\}.$$
Since $(u - \psi)(z) >0$, we must have $\theta\in (0, \infty)$.
Again, since $u=0$ in $\Omega^c$, there must
be a point $x_0 \in \Omega$ such that $u(x_0) = \theta + \psi (x_0)$ and $u \leq  \theta + \psi$ in $\Rn$.  Since $\psi$ is $C^2$ in $\Omega$, we get from the definition of viscosity  subsolution that 
\begin{align*}
-K\leq L(\theta + \psi)(x_0) + C_0 |D \psi (x_0)|
= L \psi(x_0) + C_0 |D \psi (x_0)| \leq -2(K+\varepsilon),	
\end{align*}
using \eqref{EL2.1C}. But this is a contradiction. 
This proves the claim that $u\leq \psi$ in $\Rn$.
Similar calculation using $-u$ will also give us $-u\leq \psi$
in $\Rn$. Thus 
\begin{equation*}
\abs u \leq 2\sup_{\Rn} |\chi|\, (K+\varepsilon) \quad \text{in}\; \Rn.
\end{equation*}
Since $\varepsilon$ is arbitrary, we get \eqref{EL2.1B}.

Now we can prove \eqref{EL2.1A}. In view of \eqref{EL2.1B}, it is
enough to consider the case $K>0$.
 Since $\Omega$ belongs to the
class $C^{2}$, it satisfies a uniform exterior sphere condition from outside. Let $r_\circ$ be a radius satisfying uniform exterior condition. From \cite[Lemma~5.4]{M19} there exists a bounded, Lipschitz continuous function
$\varphi$, Lipschitz constant being $r_\circ^{-1}$, satisfying

\begin{align*}
\varphi &= 0  \quad \text{in} \quad \bar{B}_{r_\circ} , 
\\
\varphi &> 0 \quad \text{in} \quad \bar{B}_{r_\circ}^c , 
\\
\varphi &\geq \varepsilon \quad \text{in} \quad  B_{(1+\delta)r_\circ}^c   , 
\\
L \varphi + C_0 |D \varphi|  &\leq -1   \quad \text{in} \quad B_{(1+\delta)r_\circ}\setminus \bar{B}_{r_\circ},
\end{align*}
for some constants $\varepsilon, \delta$, dependent on $C_0 , A_0$ and $\int_{\Rn}(|y|^2\wedge 1) \widehat{k}(y)\D{y}$.  Furthermore, $\varphi$
is $C^2$ in $B_{(1+\delta)r_\circ}\setminus \bar{B}_{r_\circ}$.
For any point $y\in\partial \Omega$,
we can find another point $z\in \Omega^c$ such that $\overline{B}_{r_\circ}(z)$ touches
$\partial \Omega$ at $y$. 
Let 
$w(x)=\varepsilon^{-1}\kappa K  \varphi(x-z)$.  Also $L(w) + C_0 | D w|  \leq -K $. Then
 $$
 L(u-w ) + C_0 | D(u-w)  |  \geq 0  \quad \text{in}\; B_{(1+\delta) r_\circ}(z)\cap \Omega.
$$  
Since, by \eqref{EL2.1B}, $u-w\leq 0$ in $(B_{(1+\delta) r_\circ}(z)\cap \Omega)^c$, from comparison principle \cite[Theorem~5.2]{BM21}
 it follows that 
$u(x)\leq w(x)$ in $\Rn$.  Note that the operator in \cite[Theorem~5.2]{BM21} does not have any gradient term, but the same proof (using the supersolution in \cite[Lemma~5.5]{M19}) also gives a comparison principle for the above operator.
Repeating a similar calculation for $-u$, 
we can conclude that $-u(x)\leq w(x)$ in $\Rn$.
This relation holds for any $y\in \partial \Omega$.
For any point $x\in \Omega$ with $\dist(x, \partial \Omega)< r_\circ$ we can find
$y\in \partial \Omega$ satisfying $\dist(x, \partial \Omega)=\abs{x-y}< r_\circ$. By previous estimate
we then obtain
$$|u(x)|\leq \varepsilon^{-1} \kappa K  \varphi(x-z)
\leq \varepsilon^{-1} \kappa K (\varphi(x-z)-
\varphi(y-z))\leq \varepsilon^{-1} \kappa K \,
r_{\circ}^{-1}\dist(x, \partial \Omega).
$$
This gives us \eqref{EL2.1A}.
\end{proof}
%%%%%%%%%%%%%%%%%%%%%%%%%%%%%%%%%%%%%%%%%%%%%%%%%%%%%%%%%%%%%%%%%%
Now we are ready to prove that $u\in C^{0,1}(\Rn)$.
\begin{proof}[Proof of \cref{T2.1}]
Let $x \in \Omega$ and $r\in (0,1)$ be such that
$4r= \dist(x, \partial \Omega)\wedge 1$.  Without
loss of any generality, we assume $x=0$.
Define $v(y) = u(ry)$ in $\Rn$.
From \cref{L2.1},  we then have
   \begin{equation}\label{ET2.1B}
   | v(y) | \leq C_1\,K 
   \min\{r^{\alpha/2} (1+\abs{y}^{\alpha/2}), r (1+\abs{y})\} \quad \quad y \in \Rn,
   \end{equation}
for some constant $C_1$. We let
$$I_r f(x) = r^{\alpha}\frac{1}{2} \int_{\Rn} (f(x+y) + f(x-y) - f(x) ) k(ry) r^n \D{y},$$
and $L_r f= \Delta f + r^{2-\alpha}a\, I_r f$.
Let us compute $L_r v(x) + C_0 r |D v|$ in $B_2$.  Clearly, we have $\Delta v(x) = r^2 \Delta u(rx)$ and 
$Dv(x) = r Du(rx)$. Also
\begin{align*}
   r^{2-\alpha} I_r v(x) &= r^2\frac{1}{2} \int_{\Rn} (u(rx+ry) + u(rx-ry) - 2u(rx) ) k(ry) r^n \D{y} 
   \\
   &= r^2 I u(rx).
   \end{align*}
Thus, it follows from
\eqref{Eq-1} that 
\begin{equation}\label{ET2.1C}
\begin{split}
L_r v  + C_0 r |D v| &\geq -Kr^2 \quad \text{in} \quad B_2,
\\
L_r v - C_0 r|D v| &\leq Kr^2 \quad \text{in} \quad B_2.
\end{split}
\end{equation}
Now consider a smooth cut-off function $\varphi, 0\leq\varphi\leq 1$, satisfying
\begin{equation*}
    \varphi=
    \begin{cases}
    1\quad \text{in}\; B_{3/2},\\
    0\quad \text{in}\; B^c_2.
    \end{cases}
\end{equation*}
Let $w = \varphi v$. Clearly,  $((\varphi-1)v)(y) = 0$ for all $y \in B_{3/2}$, which gives $D((\varphi-1)v) = 0 $ and $\Delta ((\varphi-1)v)=0$ in $x \in B_{3/2}$. Since $w = v + (\varphi-1)v$, we obtain
\begin{equation}\label{ET2.1D}
\begin{split}
L_r w  + C_0 r |D w| &\geq -Kr^2 - A_0r^{2-\alpha}| I_r ((\varphi-1)v)| \quad \text{in} \quad B_1,
\\
L_r w - C_0 r|D w| &\leq Kr^2  + A_0 r^{2-\alpha}| I_r ((\varphi-1)v)| \quad \text{in} \quad B_1,
\end{split}
\end{equation}
from \eqref{ET2.1C}. Again, since $(\varphi-1)v=0$ in $B_{3/2}$,
we have in $B_1$ that
\begin{align*}
    |r^{2-\alpha} I_r ((\varphi-1)v)(x)|&= r^{n+2}\frac{1}{2}\Big|\int_{|y|\geq 1/2} ((\varphi-1)v)(x+y)+((\varphi-1)v)(x-y)      k(ry)\D{y}\Big|
    \\
 &\leq r^{n+2} \int_{|y|\geq 1/2} |v(x+y)| k(ry)\D{y}
 \\
 &\leq r^{n+2} \int_{\frac{1}{2}\leq\abs{y}\leq\frac{1}{r}} |v(x+y)| k(ry)dy +  r^{n+2} \int_{ r \abs{y} >1} |v(x+y)| k(ry)\D{y},   \\
&:= I_{r,1}+I_{r,2}.
\end{align*}
By \cref{Assmp-1}(a)
\begin{align*}
I_{r,1} = r^{n+2} \int_{\frac{1}{2}\leq\abs{y}\leq\frac{1}{r} } |v(x+y)| k(ry)\D{y} 
&\leq r^{2- \alpha} \int_{\frac{1}{2}\leq\abs{y}\leq\frac{1}{r}} 
|v(x+y)| \widehat{k}(y)\D{y} 
\\
&= \Lambda r^{2- \alpha} \int_{\frac{1}{2}\leq\abs{y}\leq\frac{1}{r}} |v(x+y)| \frac{1}{\abs y^{n+\alpha}}\D{y} 
\\
& \leq   r^{2- \alpha} \Lambda 3^{n+\alpha} \int_{|y|\geq 1/2}
\frac{|v(x+y)|}{ 1+ |x+y|^{\alpha/2}}   \frac{1+ |x+y|^{\alpha/2}}{1 + \abs y^{n+\alpha}}     \D{y} \\
    &\leq  \kappa_2 K r^{2- \alpha/2}  \int_{|y|\geq 1/2} 
    \frac{1+ |x+y|^{\alpha/2}}{ 1+ |x+y|^{n +\alpha}} \D{y}
\\
&\leq \kappa_3 K r^{2- \alpha/2},  
\end{align*}
for some constants $\kappa_2, \kappa_3$, and in the fifth line we use \eqref{ET2.1B}. Again, by \eqref{EL2.1B}, we have
\begin{align*}
I_{r,2}
\leq \kappa r^{2} K  \int_{ r \abs y >1 } r^n k(ry) \D{y}
&= \kappa r^{2}K  \int_{\abs y >1 }  k(y)\D{y}
\\
&\leq \kappa r^{2}K  \int_{\abs y >1 }  \widehat{k}(y)\D{y}
\leq \kappa r^{2} K \int_{\abs y >1 }  J(y)\D{y} \leq 
\kappa_4 r^{2}K,
\end{align*}
for some constant $\kappa_4$. Therefore, putting the estimates of 
$I_{1,r}$ and $I_{2, r}$ in \eqref{ET2.1D} we obtain
\begin{equation}\label{ET2.1E}
\begin{split}
L_r v  + C_0 r |D v| &\geq - \kappa_5 K r^{2-\alpha/2} \quad \text{in} \quad B_1,
\\
L_r v - C_0 r|D v| &\geq  \kappa_5 K r^{2-\alpha/2} \quad \text{in} \quad B_1,
\end{split}
\end{equation}
for some constant $\kappa_5$. Applying \cite[Theorem~4.1]{MZ21}
we obtain from \eqref{ET2.1E}
\begin{equation}\label{ET2.1F}
 \norm{v}_{C^{1} (B_{\frac{1}{2}}) } \leq \kappa_6 \Big( \norm{v}_{L^{\infty}(B_2)}+ r^{2-\alpha/2}  K \Big)   
\end{equation}
for some constant $\kappa_6$. The proof in \cite[Theorem~4.1]{MZ21}
is stated for equations but it is easily seen that the same proof also works for a system of inequalities as in \eqref{ET2.1E}. From
\eqref{ET2.1B} and \eqref{ET2.1F} we then obtain
\begin{equation}\label{ET2.1G}
\sup_{y\in B_{r/2}(x), y\neq x}\frac{|u(x)-u(y)|}{|x-y|}
\leq \kappa_7 K,
\end{equation}
for some constant $\kappa_7$.

Now we can complete the proof. Not that if $|x-y|\geq \frac{1}{8}$, then 
$$\frac{|u(x)-u(y)|}{|x-y|}\leq 2\kappa K,$$
by \eqref{EL2.1B}. So we consider $|x-y|< \frac{1}{8}$. If 
$|x - y| \geq 8^{-1} ( \delta(x) \vee \delta(y))$, then using
\cref{L2.1} we get
 $$
\frac{| u(x) - u(y)|  }{|x - y|}  
\leq 4CK (\delta(x)+\delta(y)) ( \delta(x) \vee \delta(y))^{-1}  \leq 8CK.
 $$
Now let $|x - y| < 8^{-1} \min\{ \delta(x) \vee \delta(y), 1\}$.
Then either $y\in B_{\frac{\delta(x)\wedge 1}{8}}(x)$ or $x\in B_{\frac{\delta(y)\wedge 1}{8}}(y)$. Without loss of generality, we suppose
$y\in B_{\frac{\delta(x)\wedge 1}{8}}(x)$. 
From \eqref{ET2.1G} we get
$$\frac{| u(x) - u(y)|  }{|x - y|}\leq \kappa_7 K.$$
This completes the proof.
\end{proof}

With the help of \cref{T2.1} we may choose $\beta=\infty$ in 
\cref{Assmp-1}(b).
\begin{remark}\label{R2.1}
Since $u$ is globally Lipschitz, choosing $\sJ(y)=|y|^{-n-\zeta}$,
$\zeta\in (0, 1\wedge \alpha)$,
we see from \cref{T2.1} that
\begin{equation}\label{ER2.1A}
|\int_{\Rn} (u(x+y)+u(x-y)-2u(x))\sJ(y)\D{y}|
\leq \kappa\norm{u}_{C^{0,1}(\Rn)}\leq \kappa C K
\end{equation}
for some constant $\kappa$. Let $\tilde{k}(y)=k(y)\Ind_{\{|y|\leq \beta^\prime\}} + \sJ(y)$, where $\beta^\prime<\frac{2}{3}\beta$.
It is easy to see that $\tilde{k}$ satisfies \cref{Assmp-1}(a).

We now show that \cref{Assmp-1}(b) also holds for this kernel
with $\beta=\infty$. 
Fix $r\in (0, 1]$ and $x_0 \in \Rn$ and choose
$x,y \in B_{\frac{r}{2}}(x_0)$ and $z\in B_r^c(x_0)$. Without
any loss of generality we may assume that given $\beta$
is in $(0,1/2)$. 
If $\max\{|x-z|, |y-z|\}\leq \beta^\prime$,
we have from \cref{Assmp-1}(b) that
$$\tilde{k}(x-z)=k(x-z) + \sJ(x-z) \leq \varrho k(y-z) +
3^{n+\zeta}\sJ(y-z)\leq (\varrho\vee 3^{n+\zeta})\tilde{k}(y-z),$$
using the fact 
$$\frac{1}{3}|x-z|\leq |y-z|\leq 3 |x-z|.$$
Also, if $|x-z|> \beta^\prime$, then
$$\tilde{k}(x-z)= \sJ(x-z)\leq 3^{n+\zeta}\tilde{k}(y-z).$$
Suppose $|x-z|\leq \beta^\prime$ and $|y-z|> \beta^\prime$. Note that
$|y-z|\leq \frac{3}{2}\beta^\prime<\beta$. Using \cref{Assmp-1}(a) we find
that
\begin{align*}
\tilde{k}(x-z)\leq \varrho k(y-z) + 3^{n+\zeta} \sJ(y-z)
&\leq \varrho\Lambda |y-z|^{-n-\alpha} + 3^{n+\zeta} \sJ(y-z)
\\
&\leq \left(\varrho\Lambda (\beta^\prime)^{-\alpha+\zeta}
+ 3^{n+\zeta}\right)\sJ(y-z)
\\
&= \left(\varrho\Lambda (\beta^\prime)^{-\alpha+\zeta}
+ 3^{n+\zeta}\right)\tilde{k}(y-z).
\end{align*}
Thus, the kernel $\tilde{k}$ satisfies \cref{Assmp-1}(b) for
$\beta=\infty$.

On the other hand, replacing the kernel $k$ by $\tilde{k}$ and
using \cref{T2.1}, \eqref{ER2.1A}, we obtain from \eqref{Eq-1} that
\begin{equation}\label{ER2.1B}
\begin{split}
\Delta u + aI_{\tilde k} u + C_0 |Du| & \geq - C_1 K \quad\text{in}\; \Omega,
\\
\Delta u + a I_{\tilde k} u - C_0 |Du| & \leq C_1 K \quad\text{in}\; \Omega,
\\
u & =0 \quad\text{in}\; \Omega^c,
\end{split}
\end{equation}
for some constant $C_1$, dependent on $\widehat{k}, \zeta, A_0$. This modification of nonlocal kernel would be useful apply the Harnack
inequality from \cite{F09}.
\end{remark}
%%%%%%%%%%%%%%%%%%%%%%%%%%%%%%%%%%%%%%%%%%%%%%%%%%%%%%%%%%%%%%%%%
\subsection{Fine boundary regularity}
In this section we study the regularity of $u/\delta$ in $\Omega$.
Since $u$ is Lipschitz, using the estimate \eqref{ET2.1A} we may
write \eqref{Eq-1} as follows
\begin{equation}\label{Eq-3}
|L u|=|\Delta u + a Iu|\leq CK\quad \text{in}\; \Omega,
\quad \text{and}\quad u=0\quad \text{in}\; \Omega^c,
\end{equation}
where $C$ is a constant depending on $\widehat{k}, A_0, C_0$. Also, in
view of \cref{R2.1}, we can assume that $k$ satisfies \cref{Assmp-1}(b)
for $\beta=\infty$. 
The rest of the section is devoted to the proofs of \cref{T2.2,T2.3}. Towards the proof of \cref{T2.2}, our first goal 
is to get the oscillation estimate \cref{P2.1}. To obtain this
result we need \cref{L2.2,L2.3,L2.4,L2.5,L2.6}.

\begin{lemma}\label{L2.2} 
There exists a constant $\tilde\kappa$, dependent on $n, A_0 ,\widehat{k}$,  such that for any $r\in (0, 1]$,  we have a bounded
radial function $\phi_r$ satisfying
\begin{equation*}
\begin{cases}
L\phi_r \geq 0   &\text{in} \;\; B_{4r} \setminus \bar{B}_r, 
\\
0 \leq \phi_r \leq \tilde\kappa r  & \text{in} \;\; B_r , 
\\
\phi_r \geq \frac{1}{\tilde\kappa} (4r - \abs x) & \text{in} \; \; B_{4r} \setminus B_r , 
\\
\phi_r \leq 0 & \text{in} \;\;  \Rn \setminus B_{4r}.
\end{cases}
\end{equation*}
Moreover, $\phi_r\in C^2(B_{4r}\setminus \bar{B}_r)$.
\end{lemma}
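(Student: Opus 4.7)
The plan is to construct $\phi_r$ by scaling from a fixed unit-scale profile. Set $\phi_r(x) := r\,\phi(x/r)$ for a single radial $\phi = \phi_1$ to be constructed. Under this scaling, the pointwise conditions (2)--(4) of the lemma for $\phi_r$ are equivalent to the corresponding conditions for $\phi$ on $B_1$, $B_4 \setminus \bar B_1$, and $\Rn \setminus B_4$. For condition (1), direct calculation gives $\Delta\phi_r(x)=r^{-1}\Delta\phi(x/r)$, and the change of variable $y=rz$ in the definition of $I\phi_r$ together with \cref{Assmp-1}(a) in the form $r^{n+\alpha}k(rz)\le\widehat k(z)$ for $r\in(0,1]$ yields
\begin{equation*}
|I\phi_r(x)| \;\le\; r^{1-\alpha}\cdot\tfrac12\int\bigl|\phi(x/r+z)+\phi(x/r-z)-2\phi(x/r)\bigr|\,\widehat k(z)\,\D z.
\end{equation*}
Since $\alpha<2$ and $r\in(0,1]$, the local contribution (of order $r^{-1}$) dominates the nonlocal one (of order $r^{1-\alpha}$), so it suffices to produce a single $\phi$ satisfying (2)--(4) and $L\phi\ge 0$ on the open annulus $B_4 \setminus \bar B_1$.

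For $\phi$ I would use the radial ansatz
\begin{equation*}
\phi(x) \;=\; c_1\bigl(|x|^{-\gamma}-4^{-\gamma}\bigr)\;+\;c_2\bigl(|x|^2-16\bigr), \quad 1\le|x|\le 4,
\end{equation*}
with $\gamma > n-2$, and extend $\phi$ to a globally $C^{1,1}$ radial function (which is $C^2$ on the open annulus) by a bounded non-negative extension on $B_1$ and a bounded non-positive extension on $\Rn \setminus B_4$, matching values and first derivatives at $|x|=1$ and $|x|=4$. The key local estimate is $\Delta\phi(x)=c_1\gamma(\gamma-n+2)|x|^{-\gamma-2}+2nc_2\ge c_0>0$ on the annulus. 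Property (2) is immediate, property (4) follows from the construction, and property (3) follows from a Taylor expansion of $\phi$ at $|x|=4$ together with positivity of $\phi$ on the annulus, the latter enforced by taking $c_2$ sufficiently small relative to $c_1$.

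The main technical step is to bound $\tfrac12\int|\phi(w+z)+\phi(w-z)-2\phi(w)|\widehat k(z)\,\D z\le M$ uniformly for $w\in B_4 \setminus \bar B_1$, by splitting the integral at $|z|=1/4$: the near-field is controlled by the global $C^{1,1}$ norm of $\phi$ together with $\int_{|z|<1/4}|z|^2\widehat k(z)\,\D z<\infty$ (finite because $\alpha<2$), and the far-field by $\|\phi\|_{L^\infty}\cdot\int_{|z|\ge 1/4}\widehat k(z)\,\D z<\infty$ (finite by \cref{Assmp-1}(a)). The global $C^{1,1}$ regularity is essential so that the quadratic Taylor remainder controls the integrand for $z$-values that take $w\pm z$ across the interfaces $|x|=1$ and $|x|=4$. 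This gives $L\phi\ge c_0-A_0 M$, and the main obstacle is closing the inequality $c_0\ge A_0 M$; here the quadratic boost $c_2(|x|^2-16)$ plays an essential role, increasing $c_0$ by $2nc_2$ at the cost of only a bounded $C^2$ perturbation of $M$. Tuning $(c_1,c_2,\gamma)$ accordingly---and allowing $\tilde\kappa$ to depend on $A_0$ as the lemma permits---yields $L\phi\ge 0$ on the unit annulus, and the scaling argument then gives $L\phi_r\ge 0$ on $B_{4r}\setminus\bar B_r$ for all $r\in(0,1]$.
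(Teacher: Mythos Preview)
Your scaling reduction is fine, but the closing step has a real gap. What you actually need (and what you correctly identify) is the unit-scale inequality $c_0=\min_{B_4\setminus\bar B_1}\Delta\phi\ge A_0 M$, with $M$ your uniform $\widehat k$-bound on the second-difference integral. The difficulty is that both $c_0$ and $M$ are linear in the $C^{1,1}$ size of $\phi$, so their ratio is determined by the \emph{shape} of $\phi$ alone and cannot be driven above an arbitrary prescribed $A_0$. In particular, the quadratic boost does \emph{not} come ``at the cost of only a bounded $C^2$ perturbation of $M$'': for $w$ well inside the annulus the second difference of $c_2(|x|^2-16)$ equals $2c_2|z|^2$ exactly, so the near-field contribution to $M$ increases by roughly $c_2\int_{|z|<1/4}|z|^2\,\widehat k(z)\,\D z$ while $c_0$ increases by $2nc_2$. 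If $A_0\int_{|z|<1/4}|z|^2\,\widehat k(z)\,\D z>2n$ (which nothing in the hypotheses excludes), enlarging $c_2$ makes the deficit worse, not better; the power-law term suffers the same linear-in-amplitude obstruction, and no tuning of $(c_1,c_2,\gamma)$ can close the gap.

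The paper avoids this by working directly at each scale $r$ with the convex profile $v_r(x)=e^{-\eta q(x)}-e^{-\eta(4r)^2}$, $q(x)=|x|^2\wedge 2(4r)^2$. Convexity of $t\mapsto e^{-t}$ yields a one-sided \emph{lower} bound $Iv_r(x)\ge -\eta e^{-\eta|x|^2}\cdot\tfrac12\int(q(x+y)+q(x-y)-2q(x))k(y)\,\D y$, rather than merely an upper bound on $|Iv_r|$. After factoring out the common prefactor $\eta e^{-\eta|x|^2}$ from both $\Delta v_r$ and this lower bound, one is left with $4\eta|x|^2-2n-A_0\kappa r^{2-\alpha}$; the Laplacian contributes a term \emph{quadratic} in $\eta$ while the nonlocal estimate costs only a term \emph{linear} in $\eta$, so choosing $\eta\sim r^{-2}(n+A_0\kappa)$ makes the bracket positive regardless of how large $A_0$ is. Your argument discards the sign of $I\phi_r$ and thus has no analogous free parameter to absorb a large $A_0$.
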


\begin{proof}
Fix $r\in(0, 1]$  and define $v_r(x) = \E^{- \eta q(x)} - \E^{-  \eta (4 r)^2}$, where $q(x) = |x|^2 \wedge 2(4 r)^2$ and
$\eta>0$.  Clearly, $1\geq v_r(0) \geq v_r(x)$ for all $x \in \Rn$.
Thus
\begin{equation}\label{EL2.2A}
v_r(x) \leq 1- \E^{-\eta(4r)^2}\leq \eta(4r)^2,
\end{equation}
using the fact that $1-e^{-s}\leq s$ for all $s\geq 0$.
Again, for $x \in B_{4r} \setminus B_r$, we have
\begin{align}\label{EL2.2B}
v_r(x) = \E^{-\eta (4r)^2 } ( \E^{\eta ((4r)^2 - q(x))} -1  )
&\geq \eta \E^{-\eta (4r)^2 }  ((4r)^2 - |x|^2) \nonumber
\\
& = \eta \E^{-\eta (4r)^2 }  (4r + |x|)  (4r - |x|) 
 \geq 5  \eta r  \E^{-\eta (4r)^2 }   (4r - |x|).
\end{align}
Now we estimate $Lv_r$ in $B_{4r} \setminus \bar{B}_r$. Fix 
$x\in B_{4r} \setminus \bar{B}_r$. Then
$$\Delta v_r = \eta \E^{- \eta \vert x  \vert^2} \left( 4\eta \vert x  \vert^2 - 2n \right),$$
and, since $Iv_r=I(v_r + e^{-\eta (4r)^2})$, using the convexity 
of exponential map we obtain
\begin{align*}
I(e^{-\eta q(\cdot)})(x)&\geq 
-\eta e^{-\eta|x|^2}\int_{\Rn} \Big( q(x+y) + q(x-y)- 2q(x)\Big) k(y) \D{y}
\\
&\geq -\eta e^{-\eta|x|^2} \left[ \int_{|y|\leq 1} \Big( q(x+y) + q(x-y)- 2q(x)\Big) k(y) \D{y} + \int_{|y|>1}  (8r)^2 k(y)\D{y} \right]
  \\
 &\geq -\eta e^{-\eta|x|^2} \left[\int_{\abs{y} <r}   
 \frac{ 2 \abs{y}^2}{\abs y^{n+\alpha}}  \D{y}
 + \int_{r<\abs{y}<1}   
 \frac{(8r)^2}{\abs y^{n+\alpha}}  \D{y}
  +(8r)^2\int_{|y|>1} J(y) \D{y} \right]
\\
&\geq -\eta e^{-\eta|x|^2} \kappa r^{2-\alpha},
\end{align*}
for some constant $\kappa$, independent of $\eta$. Combining the above estimates we see that, for $x\in B_{4r}\setminus\bar{B}_r$,
$$
Lv_r(x)\geq \eta \E^{- \eta \abs{x}^2} \Bigl[ 4  \eta \abs{x}^2 - 2n - A_0 \kappa r^{2-\alpha} \Bigr]\geq
\eta \E^{- \eta \abs{x}^2} \Bigl[ 4  \eta r^2 - 2n - A_0 \kappa r^{2-\alpha} \Bigr].
$$
Thus, letting $\eta= \frac{1}{r^2}(n+A_0\kappa)$, we obtain 
$$Lv_r>0\quad \text{in}\; B_{4r}\setminus\bar{B}_r.$$
We set $\phi_r= \frac{v_r}{r}$ and the result follows from \eqref{EL2.2A}-\eqref{EL2.2B}.
\end{proof}

%%%%%%%%%%%%%%%%%%%%%%%%%%%%%%%%%%%%%%%%%%%%%%%%%%%%%%%%%%%%%%%%%
Let us now define the sets that we use for our oscillation estimates. We borrow the notations of \cite{RS14}.

\begin{definition}\label{D2.1}
Let $ \kappa\in(0, \frac{1}{16})$ be a fixed small constant and let $ \kappa^{\prime} = 1/2 + 2\kappa$.
Given a point $x_0 \in \partial \Omega$  and $R>0$, we define
$$
D_R = D_R(x_0) = B_R(x_0) \cap \Omega,
$$
and 
$$
D^{+}_{\kappa^{\prime} R} = D^{+}_{\kappa^{\prime} R}(x_0) = B^{+}_{\kappa^{\prime} R} (x_0) \cap \left\lbrace  x \in \Omega : (x-x_0)  \cdot  {\rm n}(x_0) \geq 2\kappa R  \right\rbrace,
$$
where ${\rm n} (x_0)$ is the unit inward normal at $x_0$. Using the $C^{2}$ regularity of the domain, there exists $\rho>0$,  depending on $\Omega$,  such that the following inclusions hold for each $x_0 \in \partial \Omega$ and $R \leq \rho$:
\begin{equation}\label{E2.18}
B_{\kappa R}(y) \subset D_{R}(x_0) \qquad \text{ for all} \; y \in D^{+}_{\kappa^{\prime} R}(x_0),
\end{equation}
and 
\begin{equation}\label{E2.19}
B_{4\kappa R}(y^{\ast} + 4\kappa R {\rm n}(y^{\ast})) \subset D_R(x_0), \quad \text{and} \quad  
B_{\kappa R}(y^{\ast} + 4\kappa R {\rm n}(y^{\ast})) \subset D^{+}_{\kappa^{\prime} R}(x_0)
\end{equation}
for all $y\in D_{R/2}$, where $y^{\ast} \in \partial \Omega$ is the unique boundary point satisfying $|y-y^{\ast}| = \dist(y, \partial \Omega)$. Note that,  since $R\leq \rho$,  $y \in D_{R/2}$  is close enough to $\partial \Omega$ and hence the point 
$y^{\ast} + 4 \kappa R\, {\rm n}(y^{\ast})$ belongs to the line joining $y$ 
and $y^{\ast}$.
\end{definition}

\begin{remark}\label{R2.2}
In the remaining part of this section, we fix $\rho >0$ to be a small constant depending only on $\Omega$, so that \eqref{E2.18}-\eqref{E2.19} hold whenever $R \leq \rho$ and $x_0 \in \partial	\Omega$.
Also, every point on $\partial \Omega$ can be touched from both inside and outside $\Omega$ by balls of radius $\rho$. We
also fix $\upgamma>0$ small enough so that
for $0<r\leq \rho$  and $x_0 \in \partial \Omega$ we have 
$$
B_{\eta r}(x_0) \cap \Omega \subset B_{(1+\sigma)r} (z) \setminus \bar{B}_{r}(z)\quad \text{for}\quad \eta=\sigma/8, \, \sigma\in (0, \upgamma),
$$
for any $x^{\prime} \in \partial \Omega \cap B_{\eta r} (x_0)$, where
 $B_r (z) $ is a ball contained in $\Rn \setminus \Omega$ that touches $\partial \Omega$ at point $x^{\prime}$. 
\end{remark}

We first treat the case $\alpha\in (0,1)$. Note that in this situation $Iu$ can be defined in the classical sense and is bounded
in $\Omega$, by \cref{T2.1}.
\begin{lemma}\label{L2.3}
Let $\alpha\in(0,1)$ and  $\Omega$ be a bounded $C^{2}$ domain. Let $u$ be such that $u \geq 0$ in $\Rn$,  and $ \abs{L u} \leq C_2 $ in $D_R$, for some constant $C_2$. Then, there exists a positive constant $C$,  depending only on $n,  \Omega ,  A_0, \widehat{k}$, such that
\begin{equation}\label{EL2.3A}
\inf_{D^{+}_{\kappa^{\prime}R}} \left( \frac{u}{\delta} \right) \leq C \left( \inf_{D_{\frac{R}{2}}} \frac{u}{\delta} + C_2 R \right)
\end{equation}
for all $R \leq \rho_0$,  where the constant $\rho_0$ depends only on 
$n,  \Omega,  A_0$ and $\int_{\Rn}(|y|^2\wedge 1) \widehat{k}(y)\D{y}$.
\end{lemma}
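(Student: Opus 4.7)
The plan is to adapt the boundary Harnack argument of Ros-Oton and Serra \cite{RS14} to the mixed local-nonlocal operator $L$. Since $\alpha\in(0,1)$, \cref{Assmp-1}(a) yields $\int_{\Rn}(|y|\wedge 1)\widehat k(y)\,\D y<\infty$, so by \cref{T2.1} the nonlocal part $Iu$ exists classically and is uniformly bounded on $\Omega$; hence $|Lu|\le C_2$ reduces to a two-sided bound on $\Delta u$ modulo a bounded remainder, and nonnegative solutions satisfy the scale-invariant interior Harnack inequality of Foondun \cite{F09} (after the kernel adjustment of \cref{R2.1}) on every ball contained in $\Omega$.

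The key reduction is the pointwise estimate
\begin{equation*}
\frac{u(\bar y)}{\delta(\bar y)} \;\le\; C\,\frac{u(y)}{\delta(y)}+CC_2 R,\qquad y\in D_{R/2},
\end{equation*}
with $y^{*}\in\partial\Omega$ the nearest boundary point to $y$ and $\bar y:=y^{*}+4\kappa R\,{\rm n}(y^{*})$; by \eqref{E2.19}, $\bar y\in D^{+}_{\kappa' R}$ and $\delta(\bar y)\asymp R$, so \eqref{EL2.3A} follows by taking the infimum over $y$ and using $\inf_{D^{+}_{\kappa' R}}u/\delta\le u(\bar y)/\delta(\bar y)$. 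To prove this pointwise estimate I would combine: (i) an interior Harnack on $B_{\kappa R/2}(\bar y)\subset D^{+}_{\kappa' R}$, giving $u\ge c_{1}u(\bar y)-CC_{2}R^{2}$ there; (ii) a Hopf-type subsolution barrier $\Phi$ for $L$ at $y^{*}$, supported in a neighborhood of the exterior tangent ball $\bar B_R(z)\subset\Omega^{c}$ at $y^{*}$, with $L\Phi\ge 0$ on $\Omega\cap B_{(1+\upgamma)R}(z)$, $\Phi\equiv 0$ on $\bar B_R(z)$, and $\Phi(x)\asymp R\,\delta(x)$ near $y^{*}$; and (iii) a supersolution corrector $q$ with $Lq\le -C_2$ and $|q(x)|\lesssim C_2\,\delta(x)^{2}$ on the neighborhood. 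The nonlocal comparison principle \cite[Theorem~5.2]{BM21} applied to $u+q$ versus $\lambda\Phi$ on $V:=(\Omega\cap B_{(1+\upgamma)R}(z))\setminus\bar B_{\kappa R/2}(\bar y)$, with $\lambda$ tuned so that $\lambda\Phi\le u+q$ on $\partial V\cup V^{c}$, propagates the inequality $u+q\ge\lambda\Phi$ into $V$; evaluating at $y$ and rearranging then yields the pointwise estimate.

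The main obstacle is the construction of the Hopf barrier $\Phi$: the radial subsolution $\phi_R$ of \cref{L2.2} is $\asymp R$ near the inner edge $\partial B_R(z)$ and vanishes only at the outer edge $\partial B_{4R}(z)$, so it is not Hopf-type at the inner boundary. For $\alpha<1$, a concrete choice is $\Phi(x):=\zeta(x)\bigl(|x-z|^{2}-R^{2}\bigr)_+$ with $\zeta$ a smooth cutoff equal to $1$ on $B_{(1+\upgamma)R}(z)$ and supported in $B_{(1+2\upgamma)R}(z)$: then $\Phi\equiv 0$ on $\bar B_R(z)$, $\Phi\asymp R\,\delta(x)$ on $\Omega\cap B_{(1+\upgamma)R}(z)$, and $\Delta(|x-z|^{2}-R^{2})\equiv 2n$, while $aI\Phi$ is of order $R^{2-\alpha}$ on this neighborhood by \cref{Assmp-1}(a) and the boundedness of $\Phi$. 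Choosing $\rho_{0}$ small enough so that $a R^{2-\alpha}<n$ forces $L\Phi\ge n>0$ on $\Omega\cap B_{(1+\upgamma)R}(z)$; a similar cutoff of $-C_{2}|x-y^{*}|^{2}/(2n)$ yields the corrector $q$. The residual comparison hypothesis $\lambda\Phi\le u+q$ on $V^{c}$ is automatic from the cutoff vanishing of $\Phi$ outside $B_{(1+2\upgamma)R}(z)$, the nonnegativity of $u$ in $\Rn$, and the vanishing of $\Phi$ on $\bar B_R(z)$.
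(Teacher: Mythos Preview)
Your overall strategy is in the right spirit but takes a more laborious route than the paper, and it rests on a misreading of how \cref{L2.2} is meant to be used.

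\textbf{What the paper does.} The paper never constructs a Hopf-type barrier at the exterior tangent ball, and it never invokes Harnack in this lemma. In Step~1 ($C_2=0$) it sets $m=\inf_{D^{+}_{\kappa' R}}u/\delta$, so by definition $u\ge m\kappa R$ on $B_{\kappa R}(\tilde y)\subset D^{+}_{\kappa' R}$. The subsolution $\phi_r$ from \cref{L2.2} (with $r=\kappa R$) is centered at the \emph{interior} point $\tilde y=y^{*}+4\kappa R\,{\rm n}(y^{*})$, and comparison in the annulus $B_{4r}(\tilde y)\setminus \bar B_r(\tilde y)$ is immediate: on $\bar B_r(\tilde y)$ one has $m\tilde\kappa^{-1}\phi_r\le m r\le u$, and on $B_{4r}(\tilde y)^{c}$ one has $\phi_r\le 0\le u$. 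The point is that $\phi_r$ decays linearly toward the \emph{outer} edge $\partial B_{4r}(\tilde y)$, which passes through $y^{*}\in\partial\Omega$; thus on the segment from $y^{*}$ to $\tilde y$ one has $\phi_r\gtrsim (4r-|x-\tilde y|)=\delta(x)$, giving $u/\delta\ge m/\tilde\kappa^{2}$ directly. Step~2 ($C_2>0$) is handled by harmonic replacement on a small $D_{r'}$ and a supersolution barrier $\varphi_r$ from \cite{M19} bounding $|u-\tilde u|\le C_2 r\,\delta$. No Harnack, no exterior Hopf barrier, no corrector.

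\textbf{Where your reading went astray.} You write that ``$\phi_R$ \dots\ is not Hopf-type at the inner boundary,'' but the paper never uses $\phi_r$ that way. It is the linear decay at the \emph{outer} boundary of the annulus (the one touching $\partial\Omega$) that yields the $\delta$-growth; the inner boundary sits well inside $\Omega$ where the trivial bound $u\ge m\kappa R$ holds by definition of $m$. This eliminates the need for the interior Harnack step you insert as item~(i).

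\textbf{Technical gaps in your route.} Your construction can probably be pushed through for $\alpha<1$, but several claims need repair. First, Foondun's Harnack \cite{F09} is for $Lu=0$; to get $u\ge c_{1}u(\bar y)-CC_2R^{2}$ you must do a harmonic replacement on $B_{\kappa R}(\bar y)$, which is essentially the paper's Step~2 at a smaller scale. Second, the claim $|q(x)|\lesssim C_2\,\delta(x)^{2}$ is false: $|x-y^{*}|\ge\delta(x)$ but not $\lesssim\delta(x)$ for generic $x$ in the comparison domain; you only have $|q|\lesssim C_2R^{2}$ on the support. This is harmless at the evaluation point $y$ (where $|y-y^{*}|=\delta(y)$) but affects the boundary verification: on $V^{c}\cap \bar B_R(z)\subset\Omega^{c}$ you need $u+q\ge 0$, and since the lemma only assumes $u\ge 0$ (not $u=0$) in $\Omega^{c}$, the cutoff of $q$ must be chosen to vanish on $\bar B_R(z)$, which in turn requires re-checking $Lq\le -C_2$ near $\partial\Omega$. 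All of this is avoidable by following the paper's two-step decomposition.
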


\begin{proof}
We split the proof in two steps.

\noindent {Step 1.}\,  Suppose $C_2=0$ and $R \leq \rho$, where
$\rho$ is given by \cref{R2.2}.
Define $m= \inf_{D^{+}_{\kappa^{'}R} } u/ \delta \geq 0$.
By \eqref{E2.18}, 
\begin{equation}\label{EL2.3B}
u \geq m\delta \geq m\,(\kappa R)\quad  \text{in}\;\; D^{+}_{\kappa^{\prime}R}.
\end{equation}
Again, by \eqref{E2.19}, for any $y \in D_{R/2}$, we have either 
$y \in D^{+}_{\kappa^{\prime}R}$ or $\delta(y) < 4 \kappa R$.  If     $y \in D^{+}_{\kappa^{\prime}R}$ it follows from the definition of $m$ that $m \leq u(y)/ \delta(y)$. Now let $\delta(y) < 4 \kappa R$.
Let $y^{\ast}$ be the nearest point to $y$ on $\partial \Omega$ and
$ \tilde{y} = y^{\ast} + 4 \kappa R\, {\rm n} (y^{\ast})$.  Again by 
\eqref{E2.19},   we have $B_{4\kappa R}(\tilde{y})  \subset D_{R}  $ and $B_{\kappa R}(\tilde{y}) \subset D^{+}_{\kappa^{\prime}R} $.    Recall that $Lu = 0 $ in $D_R$  and $u \geq 0$ in $\Rn$.

Now take $r = \kappa R$ and let $\phi_r$ be the subsolution
in \cref{L2.2}. Define $\tilde{\phi}_r(x)  = \frac{1}{\tilde\kappa}\phi_r(x- \tilde{y})$. Using \eqref{EL2.3B} and
the comparison principle \cite[Theorem~5.2]{BM21} in 
$B_{4r}(\tilde{y}) \setminus \bar{B}_r (\tilde{y})$ 
it follows that $u(x) \geq m \tilde\phi_{r} (x)$ in all of $\Rn$.  In particular, we have $u / \delta \geq \frac{1}{(\tilde\kappa)^2} m$ 
on the segment joining $y^{\ast}$ and $\tilde{y}$, that contains $y$. Hence
$$
\inf_{D^{+}_{\kappa^{\prime}R}} \left( \frac{u}{\delta} \right) \leq C  \inf_{D_{\frac{R}{2}}} \frac{u}{\delta}\,.
$$

\noindent{Step 2.}\,  Suppose  $C_2>0$. Define $r^{\prime} = \eta r$ for $r \leq \rho$ and $\eta \leq 1$ to be chosen later.  Let  $\tilde{u}$ to be the solution of (cf. \cite[Theorem~1.1]{BM21})
$$
\begin{cases}
L \tilde{u} = 0  \quad \quad \text{in} \; D_{r^{'}} \,  ,  \\
\tilde{u} = u \quad \quad  \quad \; \, \text{in} \; \Rn \setminus D_{r^{'}} \, .
\end{cases}
$$
From step 1, we see that $\tilde{u}$
satisfies \eqref{EL2.3A}.  Define $w = \tilde{u} - u$.
Applying \cite[Theorem~5.1]{BM21}, we obtain that $\abs{Lw} \leq C_2$ in $D_{r^{\prime}}$ and $w =0$ in $\Rn\setminus D_{r^{\prime}}$. Since $r \leq \rho$,  points of $\partial\Omega$  can be touched by exterior ball of radius $r$. Thus for any point $y\in\partial\Omega$
we can find another point $z\in \Omega^c$ such that $\bar{B}_r(z)$ touches
$\partial \Omega$ at $y$. From the proof of \cite[Lemma~5.4]{M19}
there exists a bounded, Lipschitz continuous function $\varphi_r$, with Lipschitz constant  $r^{-1}$, that satisfies
$$
\begin{cases}
\varphi_r = 0, & \quad \text{in} \quad \bar{B}_r , 
\\
\varphi_r > 0, & \quad \text{in} \quad \bar{B}_r^c ,
 \\
L \varphi_r \leq - \frac{1}{r^2} , & \quad \text{in} \quad B_{(1+\sigma)r}\setminus \bar{B}_r,
\end{cases}
$$
for some constant $\sigma$, independent of $r$. Without any loss of
generality we may assume $\sigma\leq\upgamma$ (see \cref{R2.2}).
We set $\eta = \frac{\sigma}{8}$. Then  $D_{r^{\prime}} \subset B_{(1+\sigma)r}(z) \setminus  \overline{B}_r(z) $.  Letting 
$v(x) = C_2 r^2 \varphi_r (x - z)$ will give us a desired supersolution and therefore, by comparison principle we get 
$\abs{w} \leq v$ in $\Rn$.  For any point $x\in D_{r^{\prime}}$ we can find
$y\in \partial\Omega$ satisfying $\dist(x, \partial \Omega)=\abs{x-y}$.  By above estimate
we obtain
$$|w(x)|\leq  C_2 r^2  \varphi_r (x-z)
\leq C_2 r^2 (\varphi_r (x-z)-
\varphi_r (y-z))\leq C_2 r \dist(x, \partial \Omega) = C_2 r \delta(x).
$$
Thus we obtain
$$
|w(x)| \leq C_2 \frac{r^{\prime}}{\eta} \delta(x) \qquad \text{in} \;\; D_{r^{\prime}}.
$$
Combining with step 1 we have
$$
\inf_{D^{+}_{\kappa^{\prime} r^{\prime}}} \left( \frac{u}{\delta} \right) \leq \frac{C}{\eta} \left( \inf_{D_{\frac{r^{\prime}}{2}}} \frac{u}{\delta} + C_2 r^{\prime} \right).
$$
Setting $\rho_0=\eta\rho$ and $R=r^\prime$ we have the desired 
result.
\end{proof}

Next we obtain a similar estimate when $\alpha\in[1,2)$.
\begin{lemma}\label{L2.4}
Let $\Omega$ be a bounded $C^{2}$ domain and $u$ be such that $u\geq 0$ in all of $\Rn$ and $|L u| \leq C_2 g$ in $D_R$ for some positive constant $C_2$ and $g$ is given by
\begin{align*}
    g(x)=\begin{cases}
   ( \delta(x))^{1-\alpha} \quad &\text{if}\;  \alpha>1,
   \\
    -\log(\delta(x)) + C_3 \quad &\text{if}\; \alpha=1,
    \end{cases}
\end{align*}
for some constant $C_3$. Set $\hat\alpha=2-\alpha$ for $\alpha\in(1,2)$ and for $\alpha=1$, $\hat\alpha$ is any number
in $(0, 1)$.
Then there exists a positive constant 
$C$, depending on $\Omega , n $ and $\widehat{k}$, such that 
\begin{equation}\label{2.19}
    \inf_{D^+_{k^{\prime}R}} \frac{u}{\delta} \leq C \Big(\inf_{D_{R/2}}\frac{u}{\delta} + C_2 R^{\hat\alpha}\Big)
\end{equation}
for all $R<\rho_0,$ where $\rho_0$ is a positive constant depending only on $\Omega,  n, \hat\alpha,  A_0$ and $\int_{\Rn}(|y|^2\wedge 1) \widehat{k}(y)\D{y}$.
\end{lemma}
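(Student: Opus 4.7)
The plan is to follow the same two-step strategy used in \cref{L2.3}, modifying only the barrier analysis to accommodate the singular right-hand side $C_2 g$. A crucial observation is that Step~1 of \cref{L2.3}, which handles the homogeneous case $L\tilde u = 0$, relies solely on the radial subsolution $\phi_r$ provided by \cref{L2.2} and is valid for every $\alpha \in (0,2)$. It therefore applies verbatim in the present setting once we reduce matters to the homogeneous problem.

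To effect this reduction, fix $r \leq \rho$ and let $r^{\prime} = \eta r$ for a small constant $\eta > 0$ to be chosen. Let $\tilde u$ solve the Dirichlet problem $L\tilde u = 0$ in $D_{r^{\prime}}$ with $\tilde u = u$ in $\Rn \setminus D_{r^{\prime}}$, which exists by \cite[Theorem~1.1]{BM21}. Applying the argument of Step~1 of \cref{L2.3} to $\tilde u$ gives
$$
\inf_{D^+_{\kappa^{\prime} r^{\prime}}} \frac{\tilde u}{\delta} \leq C \inf_{D_{r^{\prime}/2}} \frac{\tilde u}{\delta}.
$$
The error $w = \tilde u - u$ satisfies $|Lw| \leq C_2 g$ in $D_{r^{\prime}}$ and $w = 0$ in $\Rn \setminus D_{r^{\prime}}$, by \cite[Theorem~5.1]{BM21}. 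The whole task thus reduces to producing a pointwise bound $|w(x)| \leq C\, C_2\, r^{\hat\alpha}\, \delta(x)$ for $x \in D_{r^{\prime}}$.

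The central difficulty is the construction of a supersolution that dominates the singular function $g$. The barrier $\varphi_r$ from \cite[Lemma~5.4]{M19} used in the proof of \cref{L2.3} yields only $|w| \leq C_2 r \delta$, which is insufficient here because $g(x) \sim \delta(x)^{1-\alpha}$ for $\alpha > 1$ (resp.\ $g(x) \sim -\log\delta(x) + C_3$ for $\alpha = 1$) and no constant majorant of $g$ exists. I would construct, for each exterior touching ball $\bar B_r(z)$ at a boundary point $y \in \partial \Omega \cap \bar D_{r^{\prime}}$, a nonnegative function $V_r$ that vanishes on $\bar B_r(z)$, satisfies $L V_r \leq -C_2 g$ in the annular region $B_{(1+\sigma)r}(z) \setminus \bar B_r(z)$ (which, by \cref{R2.2}, contains $D_{r^{\prime}}$), and satisfies $V_r(x) \leq C\, C_2\, r^{\hat\alpha}\,\delta(x)$ for $x$ near $y$. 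The construction would refine the exponential/polynomial ansatz of \cref{L2.2} so that the drop in $L V_r$ is calibrated against $g$ rather than against a constant: adding a power-type correction of the form $A_r\, (|x-z|^2 - r^2)_{+}^{s}$ with exponent $s$ tuned to $\hat\alpha$ and amplitude $A_r$ scaled in $r$ should produce the extra negative contribution in $LV_r$ near $\partial\Omega$ without inflating $V_r$ beyond $C\, C_2\, r^{\hat\alpha}\,\delta$. Once $V_r$ is in hand, the comparison principle \cite[Theorem~5.2]{BM21} yields $|w| \leq V_r$ in $\Rn$, whence the sought bound $|w(x)|/\delta(x) \leq C\, C_2\, r^{\hat\alpha}$.

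Combining the inequality for $\tilde u$ with this bound on $w$, taking $\rho_0 = \eta \rho$ and setting $R = r^{\prime}$, yields \eqref{2.19}. The case $\alpha = 1$ is handled identically, with $\hat\alpha \in (0,1)$ chosen to absorb the logarithmic blow-up of $g$ into the same type of barrier estimate. The hard part of the argument is clearly the construction of the refined barrier $V_r$: achieving the correct drop $-C_2 g$ in $L V_r$ while keeping $V_r$ of size $C_2\, r^{\hat\alpha}\,\delta$ requires a delicate balance, since the naive rescaling of $\varphi_r$ does not capture the $\delta^{1-\alpha}$ (or logarithmic) blow-up of $g$.
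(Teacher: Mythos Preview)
Your reduction is correct and matches the paper exactly: Step~1 of \cref{L2.3} handles the homogeneous problem for all $\alpha\in(0,2)$, and the remainder $w=\tilde u-u$ satisfies $|Lw|\leq C_2 g$ in a thin collar $D_{\eta r}$ with zero exterior data. The entire content of \cref{L2.4} is indeed the barrier estimate $|w(x)|\leq C\,C_2\,r^{\hat\alpha}\,\delta(x)$, and you identify this correctly.

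The gap is that you do not actually construct the barrier; you only describe a target and suggest an ansatz. The paper's construction is concrete and rather different from your power-type correction. It takes
\[
\tilde\psi(s)=\int_0^s 2\,\E^{-qs'-q\int_0^{s'}\Theta(\tau)\,\D\tau}\,\D s' - s,
\qquad
\Theta(\xi)=\int_{|z|>\xi}\min\{1,|z|\}\,\widehat{k}(z)\,\D z,
\]
and sets $\Phi_r(x)=\tilde\psi(\delta(x)/r)$ (capped at $\tilde\psi(\sigma_1)$). The point of this choice is that $\tilde\psi''(s)=-2q(1+\Theta(s))\,\tilde\psi'(s)$, so the Laplacian term already produces $-\frac{c}{r^2}\Theta(\delta(x)/r)$; comparison with the exterior-ball barrier $\psi_{r,z}$ from \cite[Lemma~5.8]{M19} controls the nonlocal part, yielding $L\Phi_r\leq -\frac{1}{r^2}\Theta(\delta(x)/r)$ in $D_{\eta r}$. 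An explicit computation of $\Theta$ then shows $\Theta(\delta/r)\gtrsim (\delta/r)^{1-\alpha}$ for $\alpha>1$ and $\Theta(\delta/r)\gtrsim r^{\zeta}(-\log\delta)$ for $\alpha=1$ (the latter via the elementary inequality $\log(rz)\geq r^{\zeta}\log z$ for $z\geq(\theta r)^{-1}$ and $r$ small). Hence $v=\upmu\,r^{1+\hat\alpha}\Phi_r$ satisfies $Lv\leq -g$, and since $\tilde\psi'\in[\tfrac12,1]$ one gets $v\lesssim r^{\hat\alpha}\delta$, which is exactly the barrier you need.

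Your suggested correction $A_r(|x-z|^2-r^2)_+^{s}$ is not obviously adequate: matching the Laplacian to $\delta^{1-\alpha}$ forces $s=3-\alpha>1$, but then you must separately bound $I$ of this non-$C^2$ function near $\partial B_r(z)$, and there is no indication how you would do that uniformly in $r$. The paper's $\Theta$-based barrier sidesteps this by building the nonlocal tail directly into $\tilde\psi$, so that both the local and nonlocal parts of $L\Phi_r$ are controlled simultaneously. Without an explicit construction at this level of detail, your proposal remains a plan rather than a proof.
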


\begin{proof}
When $C_2=0$, the proof follows from Step 1 of \cref{L2.3}.
So we let $C_2>0$.
As before, we consider $\tilde{u}$ to be the solution of
\begin{align*}
    L \tilde{u}& =0\quad \text{in}\; D_R,
    \\
    \tilde{u} &=u\quad \text{in} \; \mathbb{R}^n\setminus D_R.
\end{align*}
Then 
$$
\inf_{D^+_{k^{\prime}R}}\frac{\tilde{u}}{\delta}\leq C\inf_{D_{R/2}}\frac{\tilde{u}}{\delta}
$$ 
holds, by step 1 of \cref{L2.3}. Defining $w=\tilde{u}-u$, 
we get $|L w|\leq C_2 g $ in $D_R$ by using \cite[Theorem~5.1]{BM21} and $w=0$ in $D^c_R$.
As before, we would consider an appropriate supersolution and then
apply comparison principle to establish \eqref{2.19}.

For this construction of supersolution
 we take inspiration from \cite[Lemma~5.8]{M19}. 
We set
$$
\tilde{\psi}(s)=\int_0^s 2\E^{-ql-q\int_0^l\Theta(\tau)\D\tau}
\D{l}-s,
$$ 
where  $q >0$ is to be chosen later and $\Theta$ is given by
$$
\Theta(s)=\int_{|z|>s}\min\{1, |z|\}\widehat{k}(z)\D{z}.
$$
Since $\Theta$ is integrable in a neighbourhood of $0$,
there exists a sufficiently small constant  $s(q) > 0$ such that, for $0<s<s(q)$,  
$\tilde{\psi}^{\prime}(s) = 2 e^{-qs-q\int_0^s\Theta(\tau)d\tau} -1 \geq \frac{1}{2}$.  Set $\sigma_1 = \min \{ \frac{s(q)}{8} , 1,\upgamma\}$. For any $r\in(0,1)$, we define
\begin{equation}\label{EL2.4A}
 \psi_{r ,z}(x) = 
 \begin{cases}
 \tilde{\psi} \left( \frac{d_{B_r(z)} (x)}{r}\right)   \quad &\text{ if} \; \;  d_{B_r(z)} (x) < r\sigma_1 ,
 \\[2mm]
 \tilde{\psi}(\sigma_1) \quad &\text{ if} \; \;  d_{B_r(z)} (x) \geq r\sigma_1,
 \end{cases}
 \end{equation}
where $d_{B_r(z)} (x) = \dist(x ,  B_r(z))$.
Let $\eta=\frac{\sigma_1}{8}$,  $0<r \leq \rho$ and $B_{\eta r}(x_0)\cap \Omega=D_{\eta r}$.  
We define
\[\Phi_r(x)=
\begin{cases}
\tilde{\psi}\left(\frac{\delta(x)}{r}\right), \quad &\text{ if} \; \;  \delta (x) < r\sigma_1,
\\[2mm]
 \tilde{\psi}(\sigma_1) \quad &\text{ if} \; \;  \delta (x) \geq r\sigma_1.
\end{cases}
\]
For $x \in D_{\eta r}$ then we have $x^{\ast}\in \partial \Omega$ such that $\delta(x)=|x-x^{\ast}|.$ Let $z^{\ast}_x=z$ be a point in $\Omega^c$ such that $B_r(z)$ touches $\partial \Omega$ at $x^{\ast}$. From \cref{R2.2} we have that 
$$
D_{\eta r} \subset   B_{(1+\sigma_1)r}(z)\setminus B_r(z).
$$
Since $\tilde\psi^{\prime\prime}<0$ and $|D \delta(x)|\geq \kappa>0$
for $\delta(x)\in (0, \rho_1)$, $\rho_1$ sufficiently small, it
follows that
\begin{align} \label{EL2.4C}
\Delta \Phi_r(x) \leq \frac{C}{r} + \tilde{\psi}^{\prime\prime} (\frac{\delta(x)}{r}) \frac{\kappa^2}{r^2}.
\end{align}
Consider $\psi_{r,z}$ from \eqref{EL2.4A} and notice that 
$\psi_{r, z}(x)=\Phi_r(x)$ and $\delta(x+y)\leq d_{B_r(z)}(x+y)$ for all $y \in \mathbb{R}^n$. Hence
\[
\psi_{r, z}(x+y)+\psi_{r, z}(x-y)-2\psi_{r, z}(x)\geq \Phi_r(x+y)+\Phi_r(x-y)-2\Phi_r(x).
\]
This readily gives (see \cite[Lemma~5.8]{M19})
\begin{equation}\label{EL2.4D}
I \Phi_r(x) \leq I \psi_{r, z}(x) \leq \frac{C}{r}\left(1+ \Theta\left(\frac{d_{B_r(z)}(x)}{r}\right)\right)
=\frac{C}{r}\left(1+ \Theta\left(\frac{\delta(x)}{r}\right)\right),
\end{equation}
using the fact $\delta(x)=|x-x^{\ast}|=d_{B_r(z)}(x)$.
Combining \eqref{EL2.4C} and \eqref{EL2.4D} we have
\begin{align*}
    L \Phi_r \leq \frac{C A_0}{r} \Big( 1 +  \Theta\Big(\frac{\delta(x)}{r}\Big) \Big) -\frac{2\kappa^2 q}{r^2}\Big(1+\Theta\Big(\frac{\delta(x)}{r}\Big)\Big).
\end{align*}
for all $x \in D_{\eta r}$. Now choose $q=\frac{1}{2\kappa^2}(C A_0+1)$ in the expression of $\tilde{\psi}$ we obtain
\begin{align}\label{EL2.4E}
  L \Phi_r \leq  -\frac{1}{r^2}\Big(1+\Theta\Big(\frac{\delta(x)}{r}\Big)\Big) \leq-\frac{1}{r^2} \Theta\Big(\frac{\delta(x)}{r}\Big),
\end{align}
for all $x \in D_{\eta r}$.

Next we estimate the function $\Theta$ in $D_{\eta r}$. 
For $\xi\in(0, 1]$, we see that
\begin{align}\label{AB1}
    \Theta(\xi)=\int_{|z|>\xi} \min\{1, |z|\}\widehat{k}(z)\D{z}
    &=\Lambda\int_{\xi <|z|\leq 1} \frac{|z|}{|z|^{n+\alpha}}\D{z}+\int_{|z|\geq 1}J(z)\D{z}\nonumber
    \\
    &=\Lambda \omega_n \int_\xi^1 \frac{r^n}{r^{n+\alpha}}\D{r}+ \kappa_1\nonumber
    \\
    &=\begin{cases}
    \frac{\omega_n \Lambda}{\alpha-1} \Big[\xi^{1-\alpha}-1\Big] + \kappa_1 \quad &\text{for}\; \alpha\in (1,2),
    \\
    \omega_n\Lambda (-\log\xi) + \kappa_1\quad &\text{for}\; \alpha=1,
    \end{cases}
\end{align}
for some positive constant $\kappa_1$. Here $\omega_n$ denotes the surface
area of the unit sphere in $\Rn$.
Since $ \frac{\delta(x)}{r}< \frac{1}{2}$ in $D_{\eta r}$, we get
from the above estimate that 
\begin{align*}
    \Theta\left(\frac{\delta(x)}{r}\right) &=\frac{\omega_n \Lambda}{\alpha-1} \Big[\left(\frac{\delta(x)}{r}\right)^{1-\alpha}-1\Big] + \kappa_1
    &\geq \Lambda \omega_n  \left(\frac{\delta(x)}{r}\right)^{1-\alpha}  \Big( \frac{ 2^{\alpha - 1} -1 }{2^{\alpha-1} (\alpha -1)}  \Big) \geq \kappa_2  \left[\frac{\delta(x)}{r}\right]^{1-\alpha}  . 
\end{align*}
for $\alpha\in (1,2)$, where the constant $\kappa_2$ is independent of $\alpha$.
Again, for $\alpha=1$, we have
\begin{equation}\label{EL2.4G}
\Theta\Big(\frac{\delta(x)}{r}\Big) =\Lambda \omega_n \log (\frac{r}{\delta(x)}) + \kappa_1
\end{equation}
for $x\in D_{\eta r}$.
We claim that for any $0<\zeta<1$, there exists a $r_\theta<1$ such that for all $r<r_\theta$
\begin{align}\label{EL2.4H}
   \log(rz) \geq r^\zeta \log(z)
\end{align}
for all $z \geq \frac{1}{\theta r}$, where $0<\theta<1$ is a fixed positive constant. To prove the claim, we let
$$
h(z)=\frac{\log(rz)}{\log(z)}.
$$ 
By our choice of parameters $z, r, \theta$, we have
$h(z)>0$. Since $\log z\geq \log(rz)$, we have
$$h^{\prime}(z)=\frac{(\log z-\log (rz))}{z(\log z)^2} >0$$
for $z>\frac{1}{\theta r}$. Thus $h$ is strictly increasing in
$[(\theta r)^{-1}, \infty)$, and therefore, 
\begin{align*}
    h(z)\geq h((\theta r)^{-1})=\frac{\log\Big(\frac{1}{\theta}\Big)}{\log\Big(\frac{1}{\theta r}\Big)}
    =\frac{\log\theta}{\log(r\theta)}\geq r^\zeta,
\end{align*}
for all $r\in (0, r_\theta)$, where $r_\theta$ depends only on
$\theta$ and $\zeta$. The gives us \eqref{EL2.4H}. Putting
\eqref{EL2.4H} in \eqref{EL2.4G} we have
\begin{align*}
 \Theta\Big(\frac{\delta(x)}{r}\Big)
  &\geq \Lambda \omega_n r^{\zeta} \log(\frac{1}{\delta(x)})  + \kappa_1 \geq C {r}^\zeta \left( \log\left(\frac{1}{\delta(x)}\right)  + \kappa_1 \right),
 \end{align*} 
in $D_{\eta r}$, for all $r\leq r_\theta$. Using the above
estimate and \eqref{EL2.4E}, we define the supersolutions as
\begin{equation*}
        v(x)= \upmu\, r^{1+\hat\alpha} \Phi_r(x), 
\end{equation*}
where the constant $\upmu$ is chosen suitably so that
$L v\leq -g$ in $D_{\eta r}$, for all $r\leq r_\theta$.
Thus, in $x\in D_{\eta r}$, we have $L(C_2 v)(x) \leq -C_2 g(x)$ and $C_2 v(x) \geq 0$ in $\mathbb{R}^n$.  Using comparison principle \cite[Theorem~5.2]{BM21} we then obtain $C_2 v(x) \geq w(x)$ in $\Rn$. Repeating the same argument with $-w,$ we get $|w(x)| \leq C_2 v(x)$ in $D_{\eta r}$. Now we can complete the proof repeating the same
argument as in \cref{L2.3}.
\end{proof}

\begin{lemma}\label{L2.5}
Let $\Omega$ be a bounded $C^{2}$ domain,  and 
$u$ be a bounded continuous function such that $u \geq 0$ in all of $\Rn$,  and $ \abs{L u} \leq C_2(1+\Ind_{[1,2)}(\alpha)g)$ in $D_R$, for some constant $C_2$. Let $\hat\alpha=1\wedge(2-\alpha)$
for $\alpha\neq 1$, and for $\alpha=1$, $\hat\alpha$ be
any number in $(0,1)$.
 Then, there exists a positive constant $C$,  depending only on $n, \Omega, A_0$ and $\widehat{k}$, such that
\begin{equation}\label{EL2.5A}
\sup_{D^{+}_{\kappa^{\prime}R}} \left( \frac{u}{\delta} \right) \leq C \left( \inf_{D^{+}_{\kappa^{\prime}R}} \frac{u}{\delta} + C_2 R^{\hat\alpha} \right)
\end{equation}
for all $R \leq \rho_0$,  where constant $\rho_0$ depends only 
on $\Omega , n ,  A_0, \hat{\alpha}$ and  $\int_{\Rn}(|y|^2\wedge 1) \widehat{k}(y)\D{y}$.
\end{lemma}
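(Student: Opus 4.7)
The plan is to reduce \eqref{EL2.5A} to a Harnack-type bound for $u$ itself on $D^{+}_{\kappa^{\prime} R}$, which then follows from the interior Harnack estimate of \cite{F09} together with a short chaining argument. The reduction rests on the observation that every $x \in D^{+}_{\kappa^{\prime} R}$ satisfies $2 \kappa R \leq \delta(x) \leq \kappa^{\prime} R < R$: the lower bound is by definition, and the upper one because $D^{+}_{\kappa^{\prime} R} \subset B_{\kappa^{\prime} R}(x_0)$ with $x_0 \in \partial \Omega$. Consequently $u/\delta$ and $u/R$ are comparable up to a constant depending only on $\kappa$, so it suffices to prove
$$\sup_{D^{+}_{\kappa^{\prime} R}} u \;\leq\; C \Bigl( \inf_{D^{+}_{\kappa^{\prime} R}} u + C_2 R^{1+\hat\alpha} \Bigr).$$

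I would first bound the source $L u$ on $D^{+}_{\kappa^{\prime} R}$. Since $\delta \geq 2\kappa R$ there, one has $g \lesssim R^{1-\alpha}$ for $\alpha \in (1,2)$, $g \lesssim 1 + \abs{\log R}$ for $\alpha = 1$, and no contribution from $g$ when $\alpha < 1$. Thus $\norm{L u}_{L^{\infty}(D^{+}_{\kappa^{\prime} R})} \lesssim C_2 (1 + R^{1-\alpha})$, with a possible logarithmic factor at $\alpha = 1$. Next, by \eqref{E2.18} each $y \in D^{+}_{\kappa^{\prime} R}$ has $B_{\kappa R}(y) \subset D_R \subset \Omega$, and by \cref{R2.1} we may assume \cref{Assmp-1}(b) holds with $\beta = \infty$. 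Since $u \geq 0$ on $\Rn$, the scale-invariant Harnack inequality of \cite{F09} applies on $B_{\kappa R}(y)$ and produces
$$\sup_{B_{\kappa R/2}(y)} u \;\leq\; C \Bigl( \inf_{B_{\kappa R/2}(y)} u + R^{2}\, \norm{L u}_{L^{\infty}(B_{\kappa R}(y))} \Bigr).$$
Because $D^{+}_{\kappa^{\prime} R}$ is connected, of diameter $\lesssim R$ and of interior thickness $\asymp \kappa R$, it is covered by a uniformly bounded number $N = N(\Omega, \kappa)$ of such balls. Iterating the local Harnack estimate along a chain connecting any two points yields
$$\sup_{D^{+}_{\kappa^{\prime} R}} u \;\leq\; C \Bigl( \inf_{D^{+}_{\kappa^{\prime} R}} u + C_2 R^{2}(1 + R^{1-\alpha}) \Bigr);$$
dividing by $\delta \asymp R$ leaves an error $C_2 (R + R^{2-\alpha})$, whose dominant part as $R \to 0$ is $C_2 R^{1 \wedge (2-\alpha)} = C_2 R^{\hat\alpha}$, as required when $\alpha \neq 1$.

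The genuinely subtle case is $\alpha = 1$, where the logarithm coming from \eqref{AB1} produces an error of order $C_2 R \abs{\log R}$ after dividing by $\delta$. This is absorbable into $C_2 R^{1-\epsilon}$ for any $\epsilon > 0$, which is exactly why the statement permits $\hat\alpha$ to be any number in $(0,1)$ when $\alpha = 1$ rather than $\hat\alpha = 1$. A secondary technical point is that the Harnack inequality of \cite{F09} must be applied at the correct uniform scale in $R$; this is guaranteed by \cref{Assmp-1}(a) together with the kernel modification of \cref{R2.1}, so the resulting constant is independent of $R$ and depends only on the parameters declared in the statement.
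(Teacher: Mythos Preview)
Your overall strategy---reduce to a Harnack estimate for $u$ on $D^{+}_{\kappa'R}$, where $\delta\asymp R$, and then chain---is sound, but there is a real gap in the key step. The Harnack inequality of \cite{F09} is stated and proved only for \emph{$L$-harmonic} functions; it does not give the inhomogeneous estimate
\[
\sup_{B_{\kappa R/2}(y)} u \;\leq\; C\Bigl(\inf_{B_{\kappa R/2}(y)} u + R^{2}\norm{Lu}_{L^\infty(B_{\kappa R}(y))}\Bigr)
\]
that you invoke. In the paper's own proof this is made explicit: Step~1 rescales to $L_r v=0$ on $B_1$ and then quotes \cite[Theorem~2.4]{F09} for harmonic $v$ only. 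A second, smaller issue is that you bound $g$ on $D^{+}_{\kappa'R}$ but then need $\norm{Lu}_{L^\infty(B_{\kappa R}(y))}$; points of $B_{\kappa R}(y)$ can have $\delta$ arbitrarily close to $0$ even though $B_{\kappa R}(y)\subset\Omega$, so $g$ is not uniformly bounded there. (This one is easily repaired by shrinking to $B_{\kappa R/2}(y)$, where $\delta\geq\kappa R/2$.)

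The paper handles the inhomogeneity by a different mechanism. Instead of an inhomogeneous Harnack on balls, it solves the Dirichlet problem $L\tilde u=0$ in a thin boundary strip $D_{r'}$ with $\tilde u=u$ outside, applies the homogeneous Harnack of \cite{F09} to $\tilde u$ (Step~1), and then controls $w=\tilde u-u$ using the supersolutions already constructed in Step~2 of \cref{L2.3,L2.4}, which yield $|w|\leq C_2\,r^{\hat\alpha}\delta$ directly---including the singular source $g$ when $\alpha\geq 1$. Your route can be completed, but only by supplying the missing ingredient yourself: perform the same harmonic replacement on $B_{\kappa R/2}(y)$ and build a barrier (for instance $\psi(x)\asymp (r^2-|x-y|^2)_+$ with $L\psi\leq -1$ and $\norm{\psi}_\infty\lesssim r^2$) to get $\norm{u-\tilde u}_\infty\lesssim r^2\norm{Lu}_\infty$. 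As written, however, the appeal to \cite{F09} does not deliver the inequality you need.
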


\begin{proof}
Recall from \cref{R2.1} that we may take $\beta=\infty$ in 
\cref{Assmp-1}(b). This property will be useful to
apply the Harnack inequality from \cite{F09}. We split the
proof in two steps.\\

\noindent{Step 1.}\, Let $C_2=0$. In this case \eqref{EL2.5A} follows from the Harnack inequality for $L$. Let $R \leq \rho$. Then for each $y \in D^{+}_{\kappa^{\prime}R}$ we have $B_{\kappa R}(y) \subset D_R $.  Hence we have $Lu = 0$ in $B_{\kappa R}(y)$.  Without loss of generality, we may assume $y=0$. Let $r=\kappa R$  and define  $v(x) = u(rx)$ for all $x \in \Rn$.  Then,
it can be easily seen that 
$$
r^2 Lu(rx)=  L_r v(x):=
\Delta v(x) + r^2 \frac{1}{2} \int_{\Rn} (v(x+y) + v(x-y) - 2v(x) ) k(ry) r^n \D{y}
 \qquad \text{for all } \; x \in B_1.
$$
This gives $L_r v(x) =  0$ in $B_1$ and $v \geq 0$ in whole $\Rn$.
From the stochastic representation of $v$ \cite[Theorem~1.1]{BM21},
it follows that $v$ is also a harmonic function in the probabilistic sense as considered in \cite{F09}.
Hence by the Harnack inequality \cite[Theorem~2.4]{F09} we obtain 
$$
\sup_{B_{\frac{1}{2}}} v \leq C \inf_{B_{\frac{1}{2}}} v,
$$
where constant $C$ does not depend on $r$. This of course, implies
$$
\sup_{B_{\frac{\kappa R}{2}}} u \leq C \inf_{B_{\frac{\kappa R}{2}}} u.
$$
Now cover $D^{+}_{\kappa^{\prime}R}$ by a finite number of balls $B_{\kappa R/2} (y_i)$, independent of $R$,  to obtain
$$
\sup_{D^{+}_{\kappa^{\prime}R}} u \leq C \inf_{D^{+}_{\kappa^{\prime}R} } u.
$$
\eqref{EL2.5A} follows since $ \kappa R/2 \leq \delta \leq 3 \kappa R/2$ in $D^{+}_{\kappa^{\prime}R}$.

\noindent{Step 2.}\,  Let $C_2>0$. The proof follows by
combining
Step 1 above and Step 2 of \cref{L2.3,L2.4}.
\end{proof}

Next we compute $L\delta$ in
$\Omega$.

\begin{lemma}\label{L2.6}
We have $|L\delta(x)|\leq C g(x)$, where $g$ is given by
\begin{equation}\label{EL2.6A}
  g(x)=\begin{cases}
   ( \delta(x) \wedge 1 )^{1-\alpha}\quad &\text{for}\quad \alpha>1,
   \\
    \log(\frac{1}{\delta(x) \wedge 1  }) + 1 \quad &\text{for}\quad \alpha=1,
    \\
    1 \quad &\text{for}\quad \alpha\in (0,1).
    \end{cases}
\end{equation}
\end{lemma}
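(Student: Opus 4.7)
The plan is to note that since $\delta\in C^2(\bar\Omega)$, the term $\Delta\delta$ is bounded on $\Omega$ and, being dominated by the constant built into every case of $g$, is absorbed by the claimed bound. Hence it suffices to estimate $|I\delta(x)|$ by a constant times $g(x)$. I will exploit two features of $\delta$: it is globally Lipschitz on $\Rn$ with $\delta\equiv 0$ on $\Omega^c$, and it is $C^2$ on $\bar\Omega$. Taking $r=1$ in \cref{Assmp-1}(a) yields $k(y)\leq \widehat{k}(y)$ pointwise, so it suffices to carry out all estimates with $\widehat{k}$ in place of $k$.

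I then split the symmetric integral defining $I\delta(x)$ into three regimes according to the scale of $|y|$ relative to $\delta(x)\wedge 1$. For $|y|\leq \delta(x)\wedge 1$, both $x\pm y$ lie in $\Omega$, so by second-order Taylor expansion and the $C^2(\bar\Omega)$ bound on $\delta$,
$$|\delta(x+y)+\delta(x-y)-2\delta(x)|\leq C|y|^2,$$
and integrating this against $\Lambda|y|^{-n-\alpha}$ on the ball of radius $\delta(x)\wedge 1$ produces a contribution of order $(\delta(x)\wedge 1)^{2-\alpha}$, which is bounded whenever $\alpha<2$. For $\delta(x)\wedge 1<|y|\leq 1$, I use the Lipschitz property $|\delta(x\pm y)-\delta(x)|\leq |y|$ together with $\delta(x)\leq |y|$ in this range to obtain $|\delta(x+y)+\delta(x-y)-2\delta(x)|\leq 4|y|$; integrating this against $\Lambda|y|^{-n-\alpha}$ gives (a constant times) $\int_{\delta(x)\wedge 1}^{1}r^{-\alpha}\,dr$. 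Finally, for $|y|>1$ I use that $\delta$ is bounded by $\diam\Omega$ on $\Rn$ together with the fact that $\widehat{k}(y)=J(y)$ is integrable there by \cref{Assmp-1}(a), which yields a bounded contribution.

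Adding the three contributions, the middle regime always dominates: the elementary integral $\int_{\delta(x)\wedge 1}^{1}r^{-\alpha}\,dr$ produces $(\delta(x)\wedge 1)^{1-\alpha}$ for $\alpha>1$, $\log(1/(\delta(x)\wedge 1))$ for $\alpha=1$, and a bounded quantity for $\alpha<1$, which is exactly the trichotomy in the definition of $g$. The only place where real care is required is the endpoint $\alpha=1$, where the middle-regime integral turns into a logarithm rather than a power; but this is what accounts for the logarithmic term appearing in $g$ for $\alpha=1$, so the three cases of the conclusion emerge organically from the same computation rather than requiring separate arguments.
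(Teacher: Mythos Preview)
Your proof is correct and follows essentially the same approach as the paper: bound $\Delta\delta$ trivially by the $C^2(\bar\Omega)$ regularity, then split the nonlocal integral according to whether $|y|$ is below or above the scale $\delta(x)$, using the $C^2$ Taylor bound in the inner region and the Lipschitz/boundedness of $\delta$ in the outer region, and finally read off the trichotomy from the elementary integral $\int_{\delta(x)\wedge 1}^1 r^{-\alpha}\,dr$. The paper packages the outer integral via the auxiliary function $\Theta(\xi)=\int_{|z|>\xi}\min\{1,|z|\}\widehat{k}(z)\,dz$ (already introduced in \cref{L2.4}) and restricts to a boundary strip $\Omega_{\rho_0}$, whereas you do a direct three-way split and handle all of $\Omega$ uniformly through $\delta(x)\wedge 1$; these are cosmetic differences only.
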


\begin{proof}
Since $\delta\in C^{0,1}(\Rn)\cap C^{2}(\bar\Omega)$ 
\cite[Theorem~5.4.3]{DZ11}, \eqref{EL2.6A} easily follows for the case $\alpha\in (0,1)$.
Let $\Omega_{\rho_0}=\Omega \cap \{\delta < \rho_0\}$
where $\rho_0<1$. It
is enough to show that
\begin{equation}\label{EL2.6B}
|L \delta(x)| \leq C \Theta(\delta(x))
\quad \text{for}\;x \in \Omega_{\rho_0},
\end{equation}
where $\Theta$ is defined as before
$$
\Theta(\xi)=\int_{|z|>\xi}\min\{1, |z|\}\widehat{k}(z)\D{z}.
$$
First of all
\begin{align}\label{EL2.6C}
|L\delta(x)|  \leq |\Delta \delta(x)|+ A_0|I \delta(x)|
 \leq \kappa + A_0 |I \delta(x)|,
\end{align}
for some constant $\kappa$, depending on $\Omega$. Again,
\begin{align*}
I \delta(x) &=\int_{\mathbb{R}^n} \Big(\delta(x+z)+\delta(x-z)-2\delta(x) \Big)k(z)\D{z}
\\
&= \int_{|z|\leq\delta(x)/2} +\int_{|z|>\delta(x)/2} .
\end{align*}
Since $\delta(x+z)+\delta(x-z)-2\delta(x)\leq \kappa_2 |z|^2$
for $|z|\leq \delta(x)/2$, we have
$$\int_{|z|\leq \delta(x)/2} \Big(\delta(x+z)+\delta(x-z)-2\delta(x) \Big)k(z)\D{z}
\leq \kappa_2 \int_{|z|\leq \delta(x)/2}|z|^2\, \widehat{k}(z)\D{z}
\leq \kappa_3,$$
for some constant $\kappa_3$. Since $\delta$ is Lipschitz, it follows that
$$\delta(x+z)+\delta(x-z)-2\delta(x)\leq 2(\diam(\Omega)\vee 1)\min\{|z|, 1\}.$$
Thus
\begin{align*}
\int_{|z|>\delta(x)/2}\Big(\delta(x+z)+\delta(x-z)-2\delta(x) \Big)k(z)\D{z}
\leq \kappa_4 \int_{|z|>\delta(x)/2}\min\{|z|, 1\} \widehat{k}(z)
\D{z}=\kappa_4\Theta(\delta(x)/2),
\end{align*}
for some constant $\kappa_4$. Inserting these estimates in
\eqref{EL2.6C} we obtain
$$|L\delta(x)|\leq \kappa_5 (1+\Theta(\delta(x)/2))
\quad \text{for all}\; x\in\Omega_{\rho_0},$$
for some constant $\kappa_5$. Choosing $\rho_0$ sufficiently small,
\eqref{EL2.6B} follows from \eqref{AB1}.

\end{proof}

Now we are ready to prove our key estimate towards the regularity
of $u/\delta$.
\begin{proposition}\label{P2.1}
Let $u$ be a bounded continuous function such that $|Lu| \leq K$ in
$\Omega$, for some constant $K$, and $u=0$ in $\Omega^c$. Given any $x_0 \in \partial \Omega$, let $D_R$ be as in the \cref{D2.1}. Then for some $\uptau \in (0,1\wedge(2-\alpha))$ there exists $C$, dependent on 
$\Omega, n ,A_0, \alpha$ and $\widehat{k}$ but not on $x_0$, such that
\begin{equation}\label{EP2.1A}
    \sup_{D_R} \frac{u}{\delta} -\inf _{D_R} \frac{u}{\delta} \leq CK R^{\uptau}
\end{equation}
for all $R\leq \rho_0$, where $\rho_0>0$ is a constant depending only on $\Omega, n, \hat{\alpha},  A_0$ and $\int_{\Rn}(|y|^2\wedge 1) \widehat{k}(y)\D{y}$.
\end{proposition}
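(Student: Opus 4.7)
The plan is to execute a Caffarelli-style dyadic iteration in the spirit of Ros-Oton--Serra~\cite{RS14}, using \cref{L2.3,L2.4,L2.5,L2.6} as the main ingredients. Fix $x_0\in\partial\Omega$ and set $R_k=2^{-k}\rho_0$ for $k\ge 0$, with $\rho_0$ chosen small enough that \cref{L2.3,L2.4,L2.5} apply at every scale $R_k\le\rho_0$. Define
\begin{equation*}
M_k\df\sup_{D_{R_k}}\frac{u}{\delta},\qquad m_k\df\inf_{D_{R_k}}\frac{u}{\delta},
\end{equation*}
which by \cref{T2.1} and \cref{L2.1} satisfy $|M_k|,|m_k|\le CK$. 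We will prove by induction on $k$ that $M_k-m_k\le QK R_k^{\uptau}$ for constants $\uptau\in(0,1\wedge(2-\alpha))$ and $Q>0$ independent of $k$; the base case $k=0$ is immediate from the global bound $|u/\delta|\le CK$ once $Q$ is chosen large enough in terms of $\rho_0$.

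For the inductive step it suffices to establish a one-step decay of the form
\begin{equation*}
M_{k+1}-m_{k+1}\le\theta(M_k-m_k)+C'K R_k^{\hat\alpha},
\end{equation*}
with $\theta\in(0,1)$ fixed and $\hat\alpha=1\wedge(2-\alpha)$ when $\alpha\ne 1$ (respectively any $\hat\alpha\in(0,1)$ when $\alpha=1$): choosing $\uptau\in(0,\hat\alpha)$ small enough that $\theta\cdot 2^{\uptau}<1$ will then close the induction. To produce the decay we consider the two auxiliary functions $v\df u-m_k\delta$ and $\tilde v\df M_k\delta-u$; both are nonnegative on $D_{R_k}$ and satisfy $v+\tilde v=(M_k-m_k)\delta$ there. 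At any point $y_0\in D^{+}_{\kappa^{\prime}R_k}$ one of $v(y_0)/\delta(y_0)$ or $\tilde v(y_0)/\delta(y_0)$ must be at least $(M_k-m_k)/2$; after relabelling we may assume it is the former, so that $\sup_{D^{+}_{\kappa^{\prime}R_k}}(v/\delta)\ge(M_k-m_k)/2$.

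Next we apply \cref{L2.5} to (a nonnegative extension of) $v$ in order to transfer this supremum bound into an infimum estimate $\inf_{D^{+}_{\kappa^{\prime}R_k}}(v/\delta)\ge c(M_k-m_k)-CK R_k^{\hat\alpha}$, and then invoke \cref{L2.3} if $\alpha\in(0,1)$, respectively \cref{L2.4} if $\alpha\in[1,2)$, to transport this lower bound to $D_{R_{k+1}}=D_{R_k/2}$, giving $m_{k+1}-m_k=\inf_{D_{R_{k+1}}}(v/\delta)\ge c'(M_k-m_k)-C''K R_k^{\hat\alpha}$. Combined with $M_{k+1}\le M_k$ this yields the desired decay with $\theta=1-c'$. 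The admissibility condition $|Lv|\le C_2(1+\Ind_{[1,2)}(\alpha)\,g)$ required by \cref{L2.5,L2.3,L2.4} follows from \eqref{Eq-3} and \cref{L2.6}, since $|Lv|\le|Lu|+|m_k|\,|L\delta|\le CK+CK g$ on $D_{R_k}$.

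The main obstacle is that $v$ need not be globally nonnegative, whereas \cref{L2.3,L2.4,L2.5} require this. We plan to remedy it by working with $v^{+}$, which agrees with $v$ on $D_{R_k}$ and vanishes in $\Omega^c$. For $x$ in the relevant subdomain $D^{+}_{\kappa^{\prime}R_k}\cup D_{R_{k+1}}$, the correction to the nonlocal operator is
\begin{equation*}
a\,I(v^{+}-v)(x)=\frac{a}{2}\int_{\Rn}\bigl(v^{-}(x+y)+v^{-}(x-y)\bigr)k(y)\,\D y,
\end{equation*}
and the integrand vanishes for $|y|\lesssim R_k$ since $v^{-}$ is supported in $\Omega\setminus D_{R_k}$. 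Using the linear growth bound $|u|\le CK\delta$ from \cref{L2.1} to control $v^{-}$ and \cref{Assmp-1}(a) to estimate the tail of $k$, this contribution can be absorbed into the admissible error term in \cref{L2.5}. Executing this tail estimate carefully---in particular dealing with the singular weight $g(x)\sim\delta(x)^{1-\alpha}$ (or $\log(1/\delta(x))$ when $\alpha=1$) arising from $|L\delta|$ near $\partial\Omega$---is the most delicate point of the argument, and is precisely what forces the use of \cref{L2.4} with its barrier $\Phi_r$ in place of the simpler \cref{L2.3} when $\alpha\ge 1$. By symmetry, the same argument applied to $\tilde v$ handles the alternative scenario, so in either case we obtain the one-step decay and the induction yields \eqref{EP2.1A}.
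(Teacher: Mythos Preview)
Your approach matches the paper's---dyadic decay of the oscillation of $u/\delta$ by passing from $v=u-m_k\delta$ to $v^+$, bounding the nonlocal correction $aIv^-$, and invoking \cref{L2.3}--\cref{L2.6}---but there is a scaling mismatch that prevents the argument from closing as written. You apply \cref{L2.5} and \cref{L2.3}/\cref{L2.4} at scale $R_k$; their hypotheses demand $|Lv^+|\le C_2(1+\Ind_{[1,2)}(\alpha)\,g)$ throughout $D_{R_k}$, not merely on the ``relevant subdomain'' $D^+_{\kappa'R_k}\cup D_{R_{k+1}}$. Your tail estimate, however, relies on $v^-\equiv 0$ in $B_{R_k}(x_0)$, which only forces the integrand to vanish for $|y|<R_k-|x-x_0|$; near $\partial B_{R_k}(x_0)\cap\Omega$ that gap collapses and $Iv^-(x)$ is uncontrolled. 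The paper handles this by taking $R_k=\rho_1/4^k$ and applying the lemmas at scale $R_k/2$: one then only needs the bound on $D_{R_k/2}$, where the gap is $\ge R_k/2$, and the Harnack plus transport step lands in $D_{R_k/4}=D_{R_{k+1}}$. With your halving scheme $R_k=2^{-k}\rho_0$ the scales do not fit.

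A secondary point: the bound $v^-\le CK\delta$ alone is not what makes the tail absorbable; you also need the $1$-Lipschitz property $\delta(x+y)\le\delta(x)+|y|$ (equivalently the Lipschitz bound on $v$ from \cref{T2.1}, used in the paper as $u_k^-(z)\le C_L\,|z-R_k z_{\rm u}|$) to upgrade the crude tail $\sim R_k^{-\alpha}$ to the usable $\sim R_k^{1-\alpha}\lesssim g(x)$ on $D_{R_k/2}$. Once you switch to the quartering scheme, apply the lemmas at half-scale, and make this Lipschitz step explicit, your argument and the paper's coincide.
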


\begin{proof}
For the proof we follow a standard method, similar to \cite{RS14},
with the help of \cref{L2.4,L2.5,L2.6}.
Fix $x_0\in \partial \Omega$ and consider $\rho_0>0$ to be chosen later. With no loss of generality, we assume $x_0=0$.
In view of \eqref{EL2.1B}, we only consider the case $K>0$.
By considering $u/K$ instead of $u$, we may assume that $K=1$,
 that is, $|Lu| \leq 1$ in $\Omega$. From \cref{L2.1} we note that
$|u|_{C^{0,1}(\Rn)}\leq C_1$. For $\alpha\in (0, 1)$,
we can calculate $Iu$ classically and $|Iu|\leq \tilde{C}$ in $\Omega$, we can combine the nonlocal term on the rhs and only deal with $\Delta u$. In this case the proof is simpler and can be done
following the same method as for the local case (the proof below 
also works with minor modifications). Therefore, we only
deal with $\alpha\in [1,2)$.
 
We show that there exists $G>0,$ $\rho_1\in (0, \rho_0)$ and $\uptau \in (0,1)$, dependent only on $\Omega, n, A_0$ and $\widehat{k}$, and monotone sequences $\{M_k\}$ and $\{m_k\}$ such that, for all $k\geq 0,$
\begin{align}\label{EP2.1B}
    M_k-m_k=\frac{1}{4^{k\uptau}}, \,\,\,\,\, -1\leq m_k \leq m_{k+1}<M_{k+1}\leq M_k \leq 1,
\end{align}
and
\begin{align}\label{EP2.1C}
    m_k \leq G^{-1} \frac{u}{\delta} \leq M_k \quad \text{in} \quad D_{R_k}=D_{R_k}(x_0),  \quad \text{where}\quad R_k=\frac{\rho_1}{4^k}. 
\end{align}
Note that \eqref{EP2.1C} is equivalent to the following
\begin{align}\label{EP2.1D}
    m_k \delta \leq G^{-1} u \leq M_k \delta, \quad \text{in} \quad B_{R_k}=B_{R_k}(x_0),  \quad \text{where}\quad R_k=\frac{\rho_1}{4^k}.
\end{align}
Next we construct monotone sequences $\{M_k\}$ and $\{m_k\}$ by induction.

The existence of $M_0$ and $m_0$ such that \eqref{EP2.1B} and \eqref{EP2.1D} hold for $k=0$ is guaranteed by \cref{L2.1}.
Assume that we have the sequences up to $M_k$ and $m_k$. We want to show the existence of $M_{k+1}$ and $m_{k+1}$ such that \eqref{EP2.1B}-\eqref{EP2.1D} hold. We set
\begin{align}\label{EP2.1E}
    u_k=\frac{1}{G} u -m_k \delta.
\end{align}
Note that to apply \cref{L2.5} we need $u_k$ to be nonnegative in $\Rn$.
Therefore we work with $u^+_k$, the positive part of $u_k$. Let $u_k=u^+_k-u^-_k$ and by the induction hypothesis,
\begin{align}\label{EP2.1F}
    u^+_k=u_k\quad \text{and} \quad  u^-_k=0\quad \text{in} \quad B_{R_k}.
 \end{align}
We need a lower bound on $u_k$. Since $u_k\geq 0$ in $B_{R_k}$,
we get for $x\in B^c_{R_k}$ that
\begin{align}\label{EP2.1G}
u_k(x)=u_k(R_k x_{\rm u}) + u_k(x)-u_k(R_k x_{\rm u})
\geq -C_L |x-R_k x_{\rm u}|,
\end{align}
where $z_{\rm u}=\frac{1}{|z|}z$ for $z\neq 0$ and $C_L$ denotes a
Lipschitz constant of $u_k$ which can be chosen independent of $k$.
Using \cref{L2.1} we
 also have $|u_k|\leq G^{-1}+\diam(\Omega)=C_1$ for all
 $x\in\Rn$.
Thus using \eqref{EP2.1F} and \eqref{EP2.1G} we calculate
$Lu^-_k$ in $D_{\frac{R_k}{2}}$.
Let $x \in D_{R_k/2}(x_0)$. By \eqref{EP2.1F}, $\Delta u^-_k(x)=0$. 
Denote by
\[
\tilde{g}(r)=\left\{
\begin{array}{lll}
|\log(r)|\quad &\text{for}\; r>0,\; \alpha=1,
\\
r^{1-\alpha}\quad &\text{for}\; r>0,\; \alpha\in (1, 2).
\end{array}
\right.
\]
Then
\begin{align*}
 0\leq I u^-_k(x)&= \int _{x+y \not \in B_{R_k}} u^-_k(x+y)k(y)\D{y}
 \\
&\leq\int _{ \left\lbrace |y|\geq \frac{R_k}{2}, x+y\neq 0 \right\rbrace } u^-_k(x+y)k(y)\D{y}
 \\
&\leq C_L\int_{\left\lbrace \frac{R_k}{2}\leq |y| \leq 1,\; x+y\neq 0 \right\rbrace } \Big|(x+y)-R_k(x+y)_{\rm u}\Big|\widehat{k}(y)\D{y}
+ C_1\int_{|y|\geq 1} J(y)\D{y}
\\
 &\leq C_L  \int_{\frac{R_k}{2}\leq |y| \leq 1}(|x|+R_k) \frac{1}{|y|^{n+\alpha}}\, \D{y} +
 C_L  \int_{\frac{R_k}{2}\leq |y| \leq 1} \frac{1}{|y|^{n+\alpha-1}}\, \D{y} + \kappa_1 C_1
 \\
  &\leq \kappa_3 ((R_k)^{1-\alpha} + \tilde{g}({R_k}/{2}) + 1)
  \\
  &\leq \kappa_4 \tilde{g}(R_k)
    \end{align*}
for some constants $\kappa_1, \kappa_3, \kappa_4$, independent of 
$k$.

Now we write $u^+_k=G^{-1} u -m_k \delta + u^-_k$ and applying the operator $L,$ we get
\begin{align}\label{EP2.1H}
    |L u^+_k| &\leq G^{-1} |Lu| + m_k |L \delta| + |Lu^-_k|\nonumber
    \\
   & \leq G^{-1} + m_k C g(x) + \kappa_4 \tilde{g}(R_k),
\end{align}
using \cref{L2.6}.
Since $\rho_1\geq R_k \geq \delta$ in $D_{R_k}$, for $\alpha\geq 1,$ we have $R^{1-\alpha}_k \leq \delta^{1-\alpha}$, and hence, from \eqref{EP2.1H}, we have
\[|L u^+_k| \leq \Big[G^{-1} [\tilde{g}(\rho_1)]^{-1} + C + \kappa_4\Big] g(x): =\kappa_5 g(x) \quad \text{in}\quad D_{R_k/2}.\]
Now we are ready to apply \cref{L2.5}.
Recalling that
 \[u^+_k=u_k=G^{-1} u -m_k \delta \quad \text{in}\quad D_{R_k},\] we get from \cref{L2.4,L2.5} that
 \begin{equation}\label{EP2.1I}
 \begin{aligned}
     \sup_{D^+_{\kappa^{\prime} R_k/2}} \Big(G^{-1} \frac{u}{\delta}-m_k\Big) &\leq C\Big(\inf_{D^+_{\kappa^{\prime} R_k/2}}\Big(G^{-1} \frac{u}{\delta}-m_k\Big) + \kappa_5 R^{\hat\alpha}_k\Big)\\
     &\leq C\Big(\inf_{D_{ R_k/4}}\Big(G^{-1} \frac{u}{\delta}-m_k\Big) + \kappa_5 R^{\hat\alpha}_k\Big).
 \end{aligned}
 \end{equation}
Repeating a similar argument for the function $\tilde{u}_k=M_k\delta- G^{-1} u$, we obtain
 \begin{align}\label{EP2.1J}
     \sup_{D^+_{\kappa^{\prime} R_k/2}} \Big(M_k-G^{-1} \frac{u}{\delta}\Big)\leq C\Big(\inf_{D_{ R_k/4}}\Big(M_k-G^{-1} \frac{u}{\delta}\Big) + \kappa_5 R^{\hat\alpha}_k\Big)
 \end{align}
Combining \eqref{EP2.1I} and \eqref{EP2.1J} we obtain
     \begin{align}\label{EP2.1K}
         M_k-m_k &\leq C\Big(\inf_{D^+_{ R_k/4}}\Big(M_k-G^{-1} \frac{u}{\delta}\Big) +\inf_{D^+_{ R_k/4}}
         \Big(G^{-1} \frac{u}{\delta}-m_k\Big)+ \kappa_5 R^{\hat\alpha}_k\Big)\nonumber
         \\
         &=C\Big(\inf_{D_{ R_{k+1}}}G^{-1} \frac{u}{\delta}-\sup_{D_{ R_{k+1}}}G^{-1} \frac{u}{\delta} + M_k-m_k + \kappa_5 R^{\hat\alpha}_k\Big).
     \end{align}
Putting $M_k-m_k=\frac{1}{4^{\uptau k}}$ in \eqref{EP2.1K}, we have
 \begin{align}\label{EP2.1L}
         \sup_{D_{ R_{k+1}}}G^{-1} \frac{u}{\delta}-\inf_{D_{ R_{k+1}}}G^{-1} \frac{u}{\delta} &\leq \Big(\frac{C-1}{C}\frac{1}{4^{\uptau k}}+\kappa_5 R^{\hat\alpha}_k\Big)\nonumber
         \\
 &=\frac{1}{4^{\uptau k}} \Big(\frac{C-1}{C}+\kappa_5 R^{\hat\alpha}_k 4^{\uptau k}\Big).
  \end{align}
Since $R_k=\frac{\rho_1}{4^k}$ for $\rho_1\in (0, \rho_0)$,
we can choose $\rho_0$ and $\uptau$ small so that
\[
\Big(\frac{C-1}{C}+\kappa_5 R^{\hat\alpha}_k 4^{\uptau k}\Big) \leq \frac{1}{4^\uptau}.
\]
Putting in  \eqref{EP2.1L} we obtain 
 \[
 \sup_{D_{ R_{k+1}}}G^{-1} \frac{u}{\delta}-\inf_{D_{ R_{k+1}}}G^{-1} \frac{u}{\delta} \leq \frac{1}{4^{\uptau(k+1)}}.
 \]
Thus we find $m_{k+1}$ and $M_{k+1}$ such that \eqref{EP2.1B} and \eqref{EP2.1C} hold. It is easy to prove \eqref{EP2.1A} from
\eqref{EP2.1B}-\eqref{EP2.1C}.
\end{proof}

Now we are ready to complete the proof of \cref{T2.2}.
\begin{proof}[Proof of \cref{T2.2}]
As mentioned before, it is enough to consider \eqref{Eq-3}. Replacing $u$ by $\frac{u}{CK}$ we may assume that $|Lu|\leq 1$
in $\Omega$. Let $v=u/\delta$. From \cref{L2.1} we then have
\begin{equation}\label{ET2.2A0}
\norm{v}_{L^\infty(\Omega)}\leq C,
\end{equation}
for some constant $C$. Also, from \cref{T2.1} we have
\begin{equation}\label{ET2.2B}
\norm{u}_{C^{0,1}(\Rn)}\leq C.
\end{equation}
It is also easily seen that for any $x\in\Omega$ with $R=\delta(x)$
we have
$$\sup_{z_1,z_2\in B_{R/2}(x)}\frac{|\delta^{-1}(z_1)-\delta^{-1}(z_2)|}{|z_1-z_2|}\leq C R^{-2}.$$
Combining it with \eqref{ET2.2B} gives
\begin{equation}\label{ET2.2C}
\sup_{z_1,z_2\in B_{R/2}(x)}\frac{|v(z_1)-v(z_2)|}{|z_1-z_2|}\leq C (1+R^{-2}).
\end{equation}
Again, by \cref{P2.1}, for each $x_0 \in \partial \Omega$ and for all $r>0$ we have
\begin{equation}\label{ET2.2D}
\sup_{D_r(x_0)} v - \inf_{D_r(x_0)} v \leq C r^{\uptau}.
\end{equation}
where $D_r(x_0) = B_r(x_0) \cap \Omega$ as before. To complete the
proof it is enough to show that 
\begin{equation}\label{ET2.2E}
\sup_{x, y\in\Omega, x\neq y}\frac{|v(x)-v(y)|}{|x-y|^\upkappa}\leq C,
\end{equation}
for some $\eta>0$. Consider $x, y\in\Omega$ and let $r=|x-y|$.
We also suppose that $\delta(x)\geq \delta(y)$. If $r\geq 1/2$, then 
$$\frac{|v(x)-v(y)|}{|x-y|^\upkappa}\leq C 2^{1+\upkappa} ,$$
by \eqref{ET2.2A0}. So we suppose $|x-y|=r<1/2$. Let $R=\delta(x)$
and $x_0, y_0\in\partial\Omega$ satisfying $\delta(x)=|x-x_0|$
and $\delta(y)=|y-y_0|$. 
Fix $p>2$. Set $\kappa=[2+ \diam\Omega]^{-p}$. If $r\leq \kappa R^p$, then $r<\frac{1}{2}R$. In this case, it follows from \eqref{ET2.2C} that
$$|v(x)-v(y)|\leq C(1+ R^{-2})r\leq C(r + \kappa^{\nicefrac{2}{p}}r^{1-\nicefrac{2}{p}})\leq C_1 r^{1-\nicefrac{2}{p}}.
$$
Again, if $r\geq \kappa R^p$, we have $R\leq [r/\kappa]^{\frac{1}{p}}$. Thus, $y\in B_{\kappa_1 r^{\frac{1}{p}}}(x_0)$ for
$\kappa_1=1+\kappa^{-\nicefrac{1}{p}}$. From \eqref{ET2.2D} we then
have
$$|v(x)-v(y)|\leq C_2 r^{\nicefrac{\uptau}{p}}.$$
Thus \eqref{ET2.2E} follows by fixing $\upkappa=\min\{\frac{\uptau}{p}, 1-\frac{2}{p}\}$. This completes the proof.
\end{proof}

\begin{remark}\label{R2.3}
The regularity of $\partial\Omega$ in \cref{T2.2} can be relaxed to
$C^{1,1}$. In this case, $\delta$ will be a $C^{1,1}$ function. Therefore, $I\delta$ is defined classically and $L\delta$ can be
interpreted in the viscosity sense (see \cite{CCKS}). The proof of \cref{T2.2} goes through due to the coupling result in \cite[Lemma~5.2]{BM21}.
\end{remark}

\subsection{Boundary regularity of $D u$}
Using \cref{T2.2} we show that $D u \in C^{1, \gamma}(\Omega)$
for some $\gamma>0$. Recall \eqref{Eq-1}
\begin{equation}\label{E2.51}
\begin{split}
L u + C_0 |Du| & \geq -K \quad\text{in}\; \Omega,
\\
Lu - C_0 |Du| & \leq K \quad\text{in}\; \Omega,
\\
u & =0 \quad\text{in}\; \Omega^c.
\end{split}
\end{equation}
Let $v=u/\delta$. From \cref{T2.2} we know that $v\in C^{\upkappa}(\Omega)$. We extend $v$ in all of $\Rn$ as a $C^{\upkappa}$ function without altering its $C^{\upkappa}$ norm
(cf. \cite[Lemma~3.8]{RS14}).
Below we find the equations satisfied by
$v$.
\begin{lemma}\label{L2.7}
If $|Lu|\leq C$ in $\Omega$ and $u=0$ in $\Omega^c$, then we have
\begin{equation}\label{EL2.7A}
\frac{1}{\delta}[-C - v L\delta -Z[v,\delta]]\leq 
L v + 2\frac{D\delta}{\delta}\cdot D v
\leq \frac{1}{\delta}[C - v L\delta -Z[v,\delta]],
\end{equation}
in $\Omega$, where
$$
Z[v,\delta](x)=\int_{\Rn} (v(y)-v(x))(\delta(y)-\delta(x)) k(y-x)\D{y}.
$$
\end{lemma}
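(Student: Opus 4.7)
The plan is to establish the Leibniz-type identity
\begin{equation*}
Lu \;=\; v\,L\delta + \delta\,Lv + 2\,D\delta\cdot Dv + a\,Z[v,\delta] \quad\text{in } \Omega,
\end{equation*}
for the factorization $u=v\delta$, and then to insert the hypothesis $|Lu|\le C$ to trap $\delta\,Lv + 2\,D\delta\cdot Dv$ between $-C - vL\delta - aZ[v,\delta]$ and $C - vL\delta - aZ[v,\delta]$. Since $\delta>0$ inside $\Omega$, dividing through by $\delta$ then yields \eqref{EL2.7A} at once (with $aZ$ absorbed into $Z$).

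For the local piece, the usual Leibniz rule gives $\Delta(v\delta)=v\,\Delta\delta+\delta\,\Delta v+2\,D\delta\cdot Dv$. For the nonlocal piece, I would expand $(v\delta)(x+y)+(v\delta)(x-y)-2(v\delta)(x)$ via the elementary algebraic identity
\begin{align*}
(fg)(x+y)&+(fg)(x-y)-2(fg)(x)\\
&= f(x)\bigl[g(x+y)+g(x-y)-2g(x)\bigr] + g(x)\bigl[f(x+y)+f(x-y)-2f(x)\bigr]\\
&\quad + \bigl[f(x+y)-f(x)\bigr]\bigl[g(x+y)-g(x)\bigr] + \bigl[f(x-y)-f(x)\bigr]\bigl[g(x-y)-g(x)\bigr].
\end{align*}
Multiplying by $\tfrac12 k(y)$, integrating, and using $k(-y)=k(y)$ to fold the two cross terms into a single integral, then changing variables $y\mapsto y-x$, yields $I(v\delta)(x)=v(x)\,I\delta(x)+\delta(x)\,Iv(x)+Z[v,\delta](x)$. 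Adding the local identity and $a$ times this nonlocal identity gives the claimed decomposition of $Lu$.

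The main subtlety is interpretive: $u$ satisfies its inequalities only in the viscosity sense, and $v$ is a priori only $C^{\upkappa}$ from \cref{T2.2}, so $Lv$ and $Z[v,\delta]$ need not be defined pointwise. I expect this is handled by reading \eqref{EL2.7A} through viscosity test functions touching $u$ at an interior point $x_0\in\Omega$: since $\delta\in C^2(\bar\Omega)$ and is bounded away from $0$ on any compact subset of $\Omega$, a $C^2$ test function $\varphi$ for $u$ at $x_0$ transfers to the $C^2$ test function $\varphi/\delta$ for $v$ at $x_0$. The product-rule identity then holds classically for the test function at $x_0$, and the viscosity inequalities for $u$ convert directly to the claimed inequalities for $v$. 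Once this is granted, the lemma is purely an algebraic rearrangement.
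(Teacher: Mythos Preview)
Your approach is the paper's: a Leibniz identity $L(v\delta)=vL\delta+\delta Lv+2D\delta\cdot Dv+aZ[v,\delta]$ read through viscosity test functions. The paper tightens two details you leave implicit: one starts from a test function $\psi$ touching $v$ (not $u$) and passes to the localized function $\tilde\psi_r$ (equal to $\delta\psi$ in $B_r(x)$ and to $u$ outside) as a test function for $u$; and since $u\in C^1(\Omega)$ by the interior estimates of \cite{MZ21}, one has $v\in C^1(\Omega)$ so that $Z[v,\delta]$ is well-defined and continuous (your a~priori $C^{\upkappa}$ bound alone would not suffice when $\alpha\geq 1+\upkappa$), after which one sends $r\to 0$ by dominated convergence to convert the cross term $Z[\psi_r,\delta]$ into $Z[v,\delta]$.
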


\begin{proof}
First of all, since $u\in C^1(\Omega)$ \cite[Theorem~4.1]{MZ21}, we have $v\in C^1(\Omega)$.
Therefore, $Z[v, \delta]$ is continuous in $\Omega$.
Consider a test function $\psi\in C^2(\Omega)$ that touches 
$v$ from above at $x\in\Omega$. Define
\[
\psi_r(z)=\left\{
\begin{array}{lll}
\psi(z)\quad &\text{in}\; B_r(x),
\\
v(z)\quad &\text{in}\; B^c_r(x).
\end{array}
\right.
\]
By our assertion, we must have $\psi_r\geq v$ for all $r$ small. To verify \eqref{EL2.7A} we must show that
\begin{align}\label{EL2.7B}
L \psi_r(x) + 2\frac{D\delta}{\delta}\cdot D \psi_r(x)
\geq \frac{1}{\delta(x)}[-C + v(x) L\delta(x) -Z[v,\delta](x)],
\end{align}
for some $r$ small. We define
\[
\tilde\psi_r(z)=\left\{
\begin{array}{lll}
\delta(z)\psi(z)\quad &\text{in}\; B_r(x),
\\
u(z)\quad &\text{in}\; B^c_r(x).
\end{array}
\right.
\]
Then, $\tilde\psi_r\geq u$ for all $r$ small. Since $|Lu|\leq C$
and $\delta\psi_r=\tilde\psi_r$, we obtain
\begin{equation*}
-C\leq L \tilde\psi_r(x)= \delta(x)L\psi_r(x) + v(x) L\delta(x) + 2D{\delta(x)}\cdot D\psi_r(x) + Z[\psi_r, \delta](x)
\end{equation*}
for all $r$ small. Rearranging the terms we have
\begin{equation}\label{EL2.7C}
-C-v(x) L\delta(x)- Z[\psi_r, \delta](x)\leq \delta(x)L\psi_r(x) + 2D{\delta(x)}\cdot D\psi_r(x).
\end{equation}
Let $r_1\leq r$. Since $\psi_r$ is decreasing with $r$, we get
from \eqref{EL2.7C} that
\begin{align*}
\delta(x)L\psi_r(x) + 2D{\delta(x)}\cdot D\psi_r(x)
&\geq \delta(x)L\psi_{r_1}(x) + 2D{\delta(x)}\cdot D\psi_r(x)
\\
&\geq \lim_{r_1\to 0}[-C-v(x) L\delta(x)- Z[\psi_{r_1}, \delta](x)]
\\
&= [-C-v(x) L\delta(x)- Z[v, \delta](x)],
\end{align*}
by dominated convergence theorem. This gives \eqref{EL2.7B}. Similarly we can verify the other side of \eqref{EL2.7A}.
\end{proof}

In order to prove \cref{T2.3}, we also need the following estimate on $v$. Define $\Omega_\sigma= \{x\in \Omega\; :\; \dist(x, \Omega^c)\geq \sigma\}$. Then we have

\begin{lemma}\label{L2.8}
For some constant $C$ it holds that
\begin{equation}\label{EL2.8A0}
\norm{D v}_{L^\infty(\Omega_\sigma)}\leq CK \sigma^{\upkappa-1}
\quad \text{for all}\; \sigma\in (0, 1).
\end{equation}
Furthermore, there exists $\eta\in (0,1)$ such that for any
$x\in \Omega_\sigma$ and $0<|x-y|\leq \sigma/8$ we have
$$\frac{|D v(y)- D v(x)|}{|x-y|^\eta}\leq CK \sigma^{\upkappa-1-\eta},$$
for all $\sigma\in (0,1)$.
\end{lemma}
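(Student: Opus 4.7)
The plan is to apply the interior $C^{1,\gamma}$ estimate of \cite[Theorem~4.1]{MZ21} on a ball of radius comparable to $\sigma$ centered at any $x_0 \in \Omega_\sigma$, and then translate bounds on $Du$ into bounds on $Dv$ via the identity $v=u/\delta$. The key trick is to work with the subtracted function
$$u_0(x) := u(x) - v(x_0)\,\delta(x),\qquad x\in\Rn,$$
which vanishes at $x_0$ and satisfies $u_0/\delta = v - v(x_0)$ on $\Omega$. For $r := \sigma/C_1$ with $C_1$ chosen large enough that $B_{2r}(x_0) \subset \Omega_{\sigma/2}$, \cref{T2.2} gives the refined local bound $|u_0(x)| \le CK |x-x_0|^{\upkappa}\,\delta(x) \le CK\sigma^{1+\upkappa}$ on $B_{2r}(x_0)$, while the global bound $\|u_0\|_{L^\infty(\Rn)} \le CK\diam(\Omega)$ follows from \cref{L2.1}.

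I then rescale by $w(y):=u_0(x_0+ry)/M$ with the balanced choice $M:=CK\sigma^{1+\upkappa}$, so $\|w\|_{L^\infty(B_2)}\le 1$. Using the rescaled operator $L_r$ from the proof of \cref{T2.1} and the identity $Lu_0 = Lu - v(x_0) L\delta$, one finds $|L_r w \pm C_0 r |Dw|| \le C$ in $B_1$: indeed $r^2 M^{-1}(|Lu| + |v(x_0)||L\delta|) \le C(\sigma^{1-\upkappa}+\sigma^{2-\alpha-\upkappa})$, where $|Lu|\le CK$ comes from \cref{T2.1} (after absorbing the drift), $|v(x_0)|\le CK$ from \cref{L2.1}, and $|L\delta|\le C\sigma^{1-\alpha}$ on $B_{2r}(x_0)$ from \cref{L2.6}; both exponents are nonnegative since $\upkappa<(2-\alpha)\wedge 1$. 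The nonlocal tail produced by a smooth cutoff in $B_{3/2}$ is handled exactly as in the proof of \cref{T2.1} using \cref{Assmp-1}(a), contributing at most $C r^2 \|u_0\|_\infty / M \le C\sigma^{1-\upkappa}$. Applying \cite[Theorem~4.1]{MZ21} yields, for a universal $\eta\in(0,1)$,
$$\|Dw\|_{L^\infty(B_{1/2})}+[Dw]_{C^\eta(B_{1/2})}\le C.$$

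Scaling back gives $\|Du_0\|_{L^\infty(B_{r/2}(x_0))} \le CM/r = CK\sigma^{\upkappa}$ and $[Du_0]_{C^\eta(B_{r/2}(x_0))}\le CMr^{-1-\eta} = CK\sigma^{\upkappa-\eta}$. From $v=u/\delta$ and $Du=Du_0+v(x_0) D\delta$ I compute
$$Dv(x)=\frac{Du_0(x)}{\delta(x)}+\frac{v(x_0)-v(x)}{\delta(x)}\,D\delta(x).$$
At $x=x_0$ the second term vanishes, so $|Dv(x_0)|\le |Du_0(x_0)|/\delta(x_0)\le CK\sigma^{\upkappa-1}$, proving \cref{EL2.8A0}. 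For the Hölder bound on $Dv$, I subtract the identity above at $x$ and $x_0$ and split the difference into three pieces, estimating each separately using the bounds on $Du_0$, the inequality $|\delta(x)^{-1}-\delta(x_0)^{-1}|\le C|x-x_0|/\sigma^2$, and the $C^\upkappa$ bound on $v$; choosing $\eta\le\upkappa$ makes each piece of size $CK\sigma^{\upkappa-1-\eta}|x-x_0|^\eta$ for $|x-x_0|\le\sigma/8$.

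The main obstacle is choosing the normalization $M$ so that, simultaneously, $w$ is of unit size on $B_2$, the source $(r^2/M)|Lu_0|$ stays bounded in $B_1$ despite the singular contribution $v(x_0)L\delta\sim\sigma^{1-\alpha}$, and the nonlocal tail of $w$ is controlled. The choice $M\sim\sigma^{1+\upkappa}$ balances the local oscillation coming from \cref{T2.2} against the boundary singularity of $L\delta$; this balance works precisely because $\upkappa<2-\alpha$, which is exactly the range of Hölder exponents produced by \cref{T2.2}.
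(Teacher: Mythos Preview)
Your approach is genuinely different from the paper's and the overall strategy is sound, but the tail estimate as written does not close. The paper works directly with $v$: it uses \cref{L2.7} to derive an equation for $w=v-v(x_0)$ with drift $2\delta^{-1}D\delta$ and right-hand side $\ell=\delta^{-1}(wL\delta+Z[v,\delta]+v(x_0)L\delta)$, then shows $\|\ell\|_{L^\infty(B_r(x_0))}\lesssim\sigma^{\upkappa-2}$ and rescales. Because $v$ is \emph{globally bounded}, the nonlocal tail after cutoff is trivially $\lesssim\|v\|_\infty\int_{|y|\ge 1/2}\widehat{k}(y)\,\D y$. Your route via $u_0=u-v(x_0)\delta$ neatly avoids the cross-term $Z[v,\delta]$ and the derivation in \cref{L2.7}, but the price is that the tail of $u_0$ is no longer controlled by its sup-norm alone.

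Concretely, your claim that the cutoff tail ``contributes at most $Cr^2\|u_0\|_\infty/M$'' is off by a factor $r^{-\alpha}$: one has
\[
r^{2-\alpha}\bigl|I_r((1-\varphi)w)(x)\bigr|\;\le\; r^{2}\|w\|_\infty\int_{|z|\ge r/2}k(z)\,\D z\;\sim\;r^{2-\alpha}\|u_0\|_\infty/M\;\sim\;\sigma^{1-\alpha-\upkappa},
\]
which blows up whenever $\alpha+\upkappa>1$, in particular for all $\alpha\ge 1$. The fix is to feed in the pointwise growth of $u_0$ coming from \cref{T2.2}: since $u_0(x)=(v(x)-v(x_0))\delta(x)$ and $\delta(x)\le\delta(x_0)+|x-x_0|$, for $x=x_0+ry$ with $r\sim\sigma$ and (after the harmless reduction to $\delta(x_0)\le 2\sigma$) one gets
\[
|u_0(x_0+ry)|\;\le\;CK\,(r|y|)^{\upkappa}\bigl(\delta(x_0)+r|y|\bigr)\;\le\;CK\,\sigma^{1+\upkappa}(1+|y|)^{1+\upkappa},
\]
so that $|w(y)|\le C(1+|y|)^{1+\upkappa}$ uniformly in $\sigma$. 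With this growth the tail integral $r^{2-\alpha}\int_{1/2\le|y|\le 1/r}|y|^{1+\upkappa-n-\alpha}\,\D y$ is $\lesssim\sigma^{2-\alpha}$ when $\upkappa<\alpha-1$ and $\lesssim\sigma^{1-\upkappa}$ otherwise, both bounded since $\upkappa<(2-\alpha)\wedge 1$. A second minor point: your bound $|u_0|\le CK\sigma^{1+\upkappa}$ on $B_{2r}(x_0)$ tacitly uses $\delta\lesssim\sigma$ there, which fails for $x_0$ deep in $\Omega$; you should first reduce to $\sigma\le\delta(x_0)\le 2\sigma$ (the bound for deeper points following from the case $\sigma'=\delta(x_0)$). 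With these corrections your argument goes through and is arguably more elementary than the paper's, trading the $Z[v,\delta]$ computation for a sharper tail analysis.
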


\begin{proof}
As earlier, we suppose $K>0$.
Diving $u$ by $K$ in \eqref{E2.51} we may assume $K=1$. Using
\cref{T2.1} we can write $|Lu|\leq C_1$ in $\Omega$, for some 
constant $C_1$. By \cref{L2.7} we then have
\begin{equation}\label{EL2.8A}
\frac{1}{\delta}[-C_1 - v L\delta -Z[v,\delta]]\leq 
L v + 2\frac{D\delta}{\delta}\cdot D v
\leq \frac{1}{\delta}[C_1 - v L\delta -Z[v,\delta]],
\end{equation}
in $\Omega$.
Fix $x_0\in \Omega_\sigma$ and define
$$w(x)=v(x)-v(x_0).$$
From \eqref{EL2.8A} we then obtain
\begin{equation}\label{EL2.8B}
-\frac{1}{\delta}C_1 - \ell\leq 
L w + 2\frac{D\delta}{\delta}\cdot D w
\leq \frac{1}{\delta}C_1 - \ell,
\end{equation}
in $\Omega$, where
$$
\ell(x)= \frac{1}{\delta(x)}[w(x) L\delta (x)+Z[v, \delta](x)+ v(x_0) L\delta(x)].
$$
Set $r=\frac{\sigma}{2}$. We claim that
\begin{equation}\label{EL2.8C}
\norm{\ell}_{L^\infty(B_r(x_0)}\leq \kappa_1 \sigma^{\upkappa-2},
\quad \text{for all}\; \sigma\in (0, 1),
\end{equation}
for some constant $\kappa_1$.
Let us denote by
$$ \xi_1=\frac{w L\delta}{\delta},
\quad \xi_2=\frac{1}{\delta}Z[v, \delta]
\quad \text{and}\quad \xi_3= \frac{v(x_0)}{\delta}L\delta.$$
Recall that $\upkappa\in (0, (2-\alpha)\wedge 1)$ from \cref{T2.2}.
Since 
$$\norm{\Delta \delta}_{L^\infty(\Omega)}<\infty\quad
\text{and}\quad \norm{I \delta}_{L^\infty(\Omega_\sigma)}
\lesssim
\begin{cases}
   ( \delta(x) \wedge 1 )^{1-\alpha}\quad &\text{for}\quad \alpha>1,
   \\
    \log(\frac{1}{\delta(x) \wedge 1  }) + 1 \quad &\text{for}\quad \alpha=1,
    \\
    1 \quad &\text{for}\quad \alpha\in (0,1),
    \end{cases}
$$
(cf. \cref{L2.6}), and
$$\norm{v}_{L^\infty}(\Rn)<\infty, 
\quad \norm{w}_{L^\infty(B_r(x_0))}\lesssim r^{\upkappa},$$
it follows that
$$\norm{\xi_3}_{L^\infty(B_r(x_0))}\lesssim 
\left.\begin{cases}
\sigma^{-(\alpha\vee 1)} & \quad \text{for}\; \alpha\neq 1,
\\
\sigma^{-1}|\log\sigma|& \quad \text{for}\; \alpha= 1,
\end{cases}
\right\}
\lesssim 
\sigma^{-2 + \upkappa},$$
and
$$\norm{\xi_1}_{L^\infty(B_r(x_0))}\lesssim
\left.\begin{cases}
\sigma^{\upkappa-1+1-(1\vee \alpha)} & \quad \text{for}\; \alpha\neq 1,
\\
\sigma^{\upkappa-1}|\log\sigma|& \quad \text{for}\; \alpha= 1,
\end{cases}
\right\}
\lesssim \sigma^{-2+\upkappa}.$$
So we are left to compute the bound for $\xi_2$. Let $x\in B_r(x_0)$.
Denote by $\hat{r}=\delta(x)/4$. Note that 
$$\delta(x)\geq \delta(x_0)-|x-x_0|\geq 2r-r =r
\Rightarrow \hat{r}\geq r/4. $$
Thus, since $u\in C^1(\Omega)$ by \cite[Theorem~4.1]{MZ21}
(as mentioned before, the proof of \cite{MZ21} works for inequations),
$$|D v|\leq |\frac{D u}{\delta}|+ |\frac{uD \delta}{\delta^2}|\lesssim [\delta(x)]^{-1}\quad \text{in}\; B_{\hat{r}}(x),$$
using \eqref{EL2.1A}.
Since $\delta$ is Lipscitz and bounded in $\Rn$, we obtain
\begin{align*}
|Z[v, \delta](x)|&\leq
\Lambda \int_{y\in B_{\hat r}(x)}\frac{|\delta(x)-\delta(y)||v(x)-v(y)|}{|x-y|^{n+\alpha}} \D{y} + 
\int_{y\in B^c_{\hat r}(x)} |\delta(x)-\delta(y)||v(x)-v(y)| \widehat{k}(y-x)\D{y}
\\
&\lesssim [\delta(x)]^{-1} \int_{y\in B_{\hat r}(x)} |x-y|^{2-n-\alpha}\D{y}
+ \int_{y\in B_1(x)\setminus B_{\hat r}(x)}\frac{(\delta(x)-\delta(y))(v(x)-v(y))}{|x-y|^{n+\alpha}} \D{y}
\\
&\quad + \int_{|y|> 1} |\delta(x)-\delta(y+x)||v(x)-v(y+x)| J(y)\D{y}
\\
&\lesssim [\delta(x)]^{1-\alpha} + \int_{y\in B_1(x)\setminus B^c_{\hat r}(x)}\frac{|\delta(x)-\delta(y)||v(x)-v(y)|}{|x-y|^{n+\alpha}} \D{y}+
\kappa_2,
\end{align*}
for some constant $\kappa_2$.
The second integration on the right hand side can be computed as follows: for
$\alpha\leq 1$ we write
$$\int_{y\in B_1(x)\setminus B^c_{\hat r}(x)}\frac{|\delta(x)-\delta(y)||v(x)-v(y)|}{|x-y|^{n+\alpha}} \D{y}\lesssim 
\int_{y\in B_1(x)\setminus B^c_{\hat r}(x)} |x-y|^{-n-\alpha+1+\upkappa}\D{y}
\lesssim (1-\hat{r}^{1-\alpha+\upkappa})\lesssim \sigma^{-1+\upkappa},
$$
whereas for $\alpha\in (1,2)$ we can compute it as
$$\int_{y\in B_1(x)\setminus B^c_{\hat r}(x)}\frac{|\delta(x)-\delta(y)||v(x)-v(y)|}{|x-y|^{n+\alpha}} \D{y}
\lesssim \int_{y\in B_1(x)\setminus B^c_{\hat r}(x)} |x-y|^{-n-\alpha+1}
\lesssim \hat{r}^{-\alpha+1}\lesssim \sigma^{-1+\upkappa}.$$
Combining the above estimates we obtain
$$\norm{\xi_2}_{L^\infty B_r(x_0)}\lesssim \sigma^{-2+\upkappa}.$$
Thus we have established the claim \eqref{EL2.8C}.

Let us now define $\zeta(z)=w(\frac{r}{2}z + x_0)$. Letting
$b(z)=2\frac{D\delta(\frac{r}{2}z+x_0)}{\delta(\frac{r}{2}z + x_0)}$ and $r_1=\frac{r}{2}$ it follows from \eqref{EL2.8B} that
\begin{equation}\label{EL2.8D}
r^2_1\left(-\frac{C}{\delta}-\ell\right)\left(r_1z+x_0\right)
\leq
\Delta \zeta + r_1^{2-\alpha}I_{r_1} \zeta + 
r_1 b(z)\cdot D\zeta \leq r_1^2\left(\frac{C}{\delta}-\ell\right)\left(r_1z+x_0\right)
\end{equation}
in $B_2(0)$, where
$$I_{r_1} f(x) = r_1^{\alpha}\frac{1}{2} \int_{\Rn} (f(x+y) + f(x-y) - f(x) ) k(r_1y) r_1^n \D{y}.$$
Consider a cut-off function $\varphi$ satisfying
$\varphi=1$ in $B_{3/2}(0)$ and $\varphi=0$ in $B^c_2(0)$. 
Defining $\tilde\zeta=\zeta\varphi$ we get from \eqref{EL2.8D} that
$$
|\Delta \tilde\zeta(z) + r_1^{2-\alpha}I_{r_1}\tilde\zeta(z) + 
r_1 b(z)\cdot D\tilde\zeta|\leq
 r_1^2(\frac{C}{\delta}+|\ell|)(r_1z+x_0)+
r_1^{2-\alpha} |I_{r_1}((\varphi-1)\zeta)|
$$
in $B_1(0)$. Since 
$$\norm{r_1b}_{L^\infty(B_1(0))}\leq \kappa_3\quad \text{for all}\;
\rho\in(0,1),$$
applying \cite[Theorem~4.1]{MZ21} (this result works for inequality) we obtain,  for some $\eta\in(0,1)$,
\begin{equation}\label{EL2.8E}
\norm{D \zeta}_{C^\eta(B_{1/2}(0))}
\leq \kappa_6 \left(\norm{\tilde\zeta}_{L^\infty(\Rn)} + Cr_1
+ r_1^2\norm{\ell(r_1\cdot+x_0)}_{L^\infty(B_1)}
+ r_1^{2-\alpha} \norm{I_{r_1}((\varphi-1)\zeta)}_{L^\infty(B_1)}\right),
\end{equation}
for some constant $\kappa_6$ independent of $\rho\in (0,1)$.
Since $v$ is in $C^\upkappa(\Rn)$, it follows that
$$\norm{\tilde\zeta}_{L^\infty(\Rn)}=\norm{\tilde\zeta}_{L^\infty(B_2)}\leq \norm{\zeta}_{L^\infty(B_2)}\lesssim r^\upkappa.$$
Also, by \eqref{EL2.8C}, 
$$r_1^2\norm{\ell(r_1\cdot+x_0)}_{L^\infty(B_1)}\lesssim \sigma^{\upkappa}.$$

Note that, for $z\in B_1(0)$, 
\begin{align*}
|I_{r_1} (\varphi-1)\zeta|
&\leq \int_{|y|\geq 1/2} |(\varphi(x+y)-1)\zeta(x+y)|\widehat{k}(y) \D{y}
\\
&\leq 2\norm{v}_{L^\infty(\Rn)}\,\int_{|y|\geq 1/2} \widehat{k}(y) \D{y}
\\
&\leq \kappa_3
\end{align*}
for some constant $\kappa_3$. Putting these estimates in \eqref{EL2.8E} and calculating the gradient at $z=0$ we obtain
$$|D v(x_0)|\leq \kappa_4 \sigma^{-1+\upkappa},$$
for all $\sigma\in (0,1)$. This proves the first part.

For the second part, compute the H\"{o}lder ratio with
$D\zeta(0)-D\zeta(z)$ where
$z=\frac{2}{r}(y-x_0)$ for $|x_0-y|\leq \sigma/8$.
This completes the proof.

\end{proof}

Now we can establish the H\"{o}lder regularity of the gradient
up to the boundary.
\begin{proof}[Proof of \cref{T2.3}]
Since $u=v\delta$ it follows that
$$D u = vD \delta + \delta D v.$$
Since $\delta\in C^{2}(\bar\Omega)$, it follows from
\cref{T2.2} that
$v D\delta\in C^\upkappa(\bar\Omega)$. Thus, we only need to concentrate on $\vartheta=\delta D v$. Consider $\eta$ from
\cref{L2.8} and with no loss of generality, we may fix $\eta\in (0, \upkappa)$.

For $|x-y|\geq \frac{1}{8}(\delta(x)\vee\delta(y))$ it follows from
\cref{EL2.8A0} that
$$\frac{|\vartheta(x)-\vartheta(y)|}{|x-y|^\eta}
\leq C (\delta^\upkappa(x) + \delta^{\upkappa}(y))(\delta(x)\vee\delta(y))^{-\eta}\leq 2C.
$$
So consider the case $|x-y|< \frac{1}{8}(\delta(x)\vee\delta(y))$.
Without loss of generality, we may assume that
$|x-y|< \frac{1}{8}\delta(x)$. Then
$$\frac{9}{8}\delta(x)\geq |x-y|+\delta(x)\geq \delta(y)
\geq \delta(x)-|x-y|\geq \frac{7}{8}\delta(x).$$
By \cref{L2.8}, it follows
\begin{align*}
\frac{|\vartheta(x)-\vartheta(y)|}{|x-y|^\eta}
&\leq |D v(x)|\frac{|\delta(x)-\delta(y)|}{|x-y|^\eta}
+\delta(y)\frac{|D v(x)-D v(y)|}{|x-y|^\eta}
\\
&\lesssim \delta(x)^{-1+\upkappa}(\delta(x))^{1-\eta} +
\delta(y) [\delta(x)]^{\upkappa-1-\eta}
\\
&\leq C.
\end{align*}
This completes the proof by setting $\gamma=\eta$.
\end{proof}

\section{Overdetermined problems}\label{Overdetermined}

In this section we solve an overdetermined problem. Let $H:\RR\to\RR$ be a given locally Lipschitz function. We also assume 
that $a\in (0, A_0)$.
 Our main result of this section is the following.
\begin{theorem}\label{T3.1}
Let $\Omega \subset \Rn,$ $n\geq 2$, be an open bounded set with $C^{2}$ boundary. Suppose that \cref{Assmp-1} holds, 
$k=k(|y|)$ and $k:(0, \infty)\to (0, \infty)$ is strictly decreasing.
 Let $f:\RR\to\RR$ be locally Lipschitz and $u$ be a viscosity solution to
\begin{equation}\label{ET3.1A}
\begin{split}
Lu + H(|D u|) & =f(u)\quad  \text{in} \; \Omega,
\\
u&=0\quad \text{in} \;  \Omega^c,\quad u>0\quad \text{in}\; \Omega,
\\
\frac{\partial u}{\partial \rm n}&=c\quad \text{on}\; \partial \Omega,
\end{split}
\end{equation}
for some fixed $c>0$, where ${\rm n}$ is the unit inward normal on $\partial \Omega$. Then $\Omega$ must be a ball. Furthermore, $u$ is radially symmetric and strictly decreasing in the radial direction.
\end{theorem}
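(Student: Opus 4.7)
The plan is to adapt the classical moving planes method of Serrin to the mixed local-nonlocal setting, following the lines of \cite{FJ15,BJ20}; the strict monotonicity of the radial kernel $k$ is what will make the reflection argument produce the correct sign in the nonlocal term. Fix a direction $e\in S^{n-1}$ and, for $\lambda\in\RR$, set $T_\lambda=\{x\in\Rn: x\cdot e=\lambda\}$, $\Sigma_\lambda=\{x\in\Omega: x\cdot e>\lambda\}$, and $x^\lambda=x-2((x\cdot e)-\lambda)e$. Because $\Delta$ is invariant under hyperplane reflections and the nonlocal part is built from a radial kernel, $u_\lambda(x):=u(x^\lambda)$ solves the same equation as $u$ on the reflected cap. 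I would then work with the antisymmetric function $w_\lambda=u_\lambda-u$: since $H$ and $f$ are locally Lipschitz and $Du$ is globally bounded by \cref{T2.3}, $w_\lambda$ satisfies a linear inequality of the form $-\Delta w_\lambda - a I w_\lambda + b(x)\cdot Dw_\lambda + c(x) w_\lambda\le 0$ on $\Sigma_\lambda$, with $b,c\in L^\infty$. The key algebraic fact is that, because $k(|\cdot|)$ is strictly decreasing, $k(x-y^\lambda)>k(x-y)$ whenever $x,y\in\Sigma_\lambda$, which gives the nonlocal piece a cooperative sign when decomposed against the reflection through $T_\lambda$.

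Next I would execute the three standard steps of the moving planes procedure. \emph{Start:} for $\lambda$ close to $\sup_{\overline\Omega} x\cdot e$ the cap $\Sigma_\lambda$ is thin, so a small-volume / narrow-region maximum principle for antisymmetric supersolutions (of the kind in \cite{FJ15}, available here via the Harnack-type estimate of \cite{F09}, which by \cref{R2.1} applies to our modified kernel with $\beta=\infty$) yields $w_\lambda\ge 0$ on $\Sigma_\lambda$. \emph{Slide:} define $\lambda^*=\inf\{\lambda: w_\mu\ge 0\text{ on }\Sigma_\mu\text{ and }\Sigma_\mu^\mu\subset\overline\Omega\text{ for all }\mu\ge\lambda\}$. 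Continuity gives $w_{\lambda^*}\ge 0$ on $\Sigma_{\lambda^*}$ and $\Sigma_{\lambda^*}^{\lambda^*}\subset\overline\Omega$. \emph{Stop:} by the strong maximum principle (again from \cite{F09}) either $w_{\lambda^*}\equiv 0$, in which case symmetry across $T_{\lambda^*}$ follows at once, or $w_{\lambda^*}>0$ on $\Sigma_{\lambda^*}$, and the latter must be ruled out by contradicting the criticality of $\lambda^*$.

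The stopping step is the crux. Criticality of $\lambda^*$ is enforced by one of two geometric obstructions: (a) $\partial\Sigma_{\lambda^*}^{\lambda^*}$ is internally tangent to $\partial\Omega$ at some interior point of the reflected cap, or (b) $T_{\lambda^*}$ is orthogonal to $\partial\Omega$ at some $x_0\in\partial\Omega\cap T_{\lambda^*}$. Case (a) is handled by a nonlocal Hopf boundary lemma at the tangent point together with the overdetermined condition $\partial u/\partial{\rm n}=c$: at the tangency both $u$ and $u_{\lambda^*}$ vanish with matching normal derivatives, which contradicts $w_{\lambda^*}>0$ in the interior. Case (b) is the genuinely nonlocal corner case; here I would follow \cite{BJ20,FJ15} and invoke the fine boundary expansion $u/\delta\in C^\upkappa(\overline\Omega)$ from \cref{T2.2}, together with the $C^{1,\gamma}$ estimate from \cref{T2.3}, to extract enough transversal regularity of $w_{\lambda^*}$ at $x_0$ to derive the contradiction. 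Once both cases are excluded, $w_{\lambda^*}\equiv 0$ and $\Omega$ is symmetric with respect to $T_{\lambda^*}$. Iterating over all $e\in S^{n-1}$ then forces $\Omega$ to be a ball and $u$ to be radial, and the strict monotonicity accumulated during the slide step gives that $u$ is strictly decreasing in the radial direction.

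The main obstacle I anticipate is the corner-point analysis (b). The nonlocal tail of $I w_{\lambda^*}$ couples the behavior of $w_{\lambda^*}$ near $x_0$ to its values throughout the cap, so the classical Serrin corner argument does not carry over directly; one must replace second-order boundary expansions by the precise up-to-the-boundary decomposition $u=\delta\, v$ with $v\in C^\upkappa(\overline\Omega)$, which is exactly what \cref{T2.2} provides and why that estimate is the technical heart of the paper. A secondary but routine issue will be setting up the correct linearized inequality for the antisymmetric $w_\lambda$, since one must symmetrize $k$ against the reflection through $T_\lambda$ in order to read off the sign of the nonlocal contribution cleanly.
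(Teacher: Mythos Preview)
Your proposal is correct and follows essentially the same moving-planes scheme as the paper, including the identification of the two critical situations and the use of \cref{T2.2} in the corner case. Two small technical corrections: the narrow-domain maximum principle in the paper (\cref{L3.1}) is obtained from the ABP estimate of \cite[Theorem~A.4]{M19}, not from the Harnack inequality of \cite{F09}; and the corner case (your case~(b), the paper's Situation~B) is resolved by a quantitative comparison---\cref{L3.3} uses \cref{T2.2} to show $v(t\bar\eta)=o(t^2)$ along the diagonal $\bar\eta=e_2-e_1$, while \cref{L3.4} builds an antisymmetric barrier giving $v(t\bar\eta)\ge Ct^2$, and these two bounds are incompatible.
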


Proof of \cref{T3.1} follows from the boundary estimates in 
\cref{T2.2} combined with the approach of \cite{FJ15,BJ20}. 
Also, note that we have taken $k$ to be positive valued. This is just for convenience and the proofs below can be easily modified to 
include kernel $k$ that is non-increasing but strictly decreasing in a neighbourhood of $0$, provided we assume $\Omega$ to be connected.
We provide a sketch for the proof of \cref{T3.1} and the finer details can be found in \cite{FJ15,BJ20}. From \cref{T2.1,T2.3} we see that
$u\in C^{0,1}(\Rn)\cap C^{1, \gamma}(\bar\Omega)$, and therefore,
$u\in C^{2, \eta}(\Omega)$ by \cite{MZ21}. Therefore, we can assume that $u$ is a classical solution to \eqref{ET3.1A}.

Given a unit vector $e$, let us define the half space 
\[
\cH=\cH_{\lambda, e}=\{x\in \Rn : x\cdot e > \lambda\},
\]
and let $\bar{x}=\sR_{\lambda, e}(x)=x-2(x\cdot e)e+2\lambda e$ be the reflection of $x$ along $\partial \cH= \{x\cdot e=\lambda\}$. We say $v : \Rn \to \mathbb{R}$ is anti-symmetric if $v(x)=-v(\bar{x})$ for all $x\in \Rn$. Now let $D \subset \cH$ be a bounded open set and $u$ be a bounded anti-symmetric solution to
\begin{equation*}
\begin{aligned}
    Lu-\upbeta |D u| &\leq g \quad \text{in}\; D
    \\
    u & \geq 0\quad \text{in} \; \cH\setminus D,
\end{aligned}
\end{equation*}
where $\upbeta>0$ is a fixed constant.
Let
\[
 v=\begin{cases}
    -u &\text{in}\; \{u<0\}\cap D,
    \\[2mm]
    0 & \text{otherwise}.
    \end{cases}
\]
Then it can be easily seen that $v$ solves
\begin{equation}\label{E3.2}
Lv - \upbeta |D v|\geq -g\quad \text{in}\quad \Sigma:=\{u<0\}\cap D,
\end{equation}
 in the viscosity sense. To check \eqref{E3.2},
consider $x\in \Sigma$ and test function $\phi$ such that 
$\phi(x)=v(x)$ and $\phi(y)>v(y)$ for $y\in \Rn\setminus\{x\}$. Define $\psi:= \phi + (-u-v).$ Then $\psi(x)=-u(x)$ and $\psi(y)>-u(y)$ for $y\in \mathbb{R}^d\setminus\{x\}$. Furthermore, $\psi=\phi$ in $\Sigma$.
 Thus, we get $L\psi(x) + \upbeta |D \phi(x)| \geq -g(x)$. 
 This implies $L\phi + \upbeta|D \phi|+ L(-u-v)(x)+ 
 |D \phi(x)| \geq -g(x)$. Since $k$ is radially decreasing and
$u$ is anti-symmetric, it follows that $L(-u-v)(x)\leq 0$
(cf. \cite[p.~11]{BJ20}). This gives us \eqref{E3.2}.

The following narrow domain maximum principle is a
consequence of the ABP estimate in \cite[Theorem~A.4]{M19}.
\begin{lemma}\label{L3.1}
Let $\cH$ be the half space and $D\subset \cH$ be open and bounded. Also, assume $c$ to be bounded. Then there exists  a positive constant $C$, depending on $\diam D, n, k$, such that if $u \in C_b(\Rn)$ is an anti-symmetric supersolution of 
\begin{equation*}
\begin{aligned}
    Lu -\upbeta |D u|-c(x)u &=0\quad \text{in}\; D,
    \\
    u &\geq 0\quad \text{in} \; \cH\setminus D,
    \end{aligned}
\end{equation*}
then we have
\[
\sup_{\Omega} u^{-} \leq C \norm{c^{+}}_{L^{\infty}(D)}
\norm{u^{-}}_{L^n(D)}.
\]
In particular, given $\kappa_\infty>0$ and $c^{+}\leq \kappa_{\infty}$ on $D$, there is a $\delta>0$ such that if 
$|D|<\delta$, we must have $u\geq 0$ in $\cH$.
\end{lemma}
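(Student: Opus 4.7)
The plan is to reduce \cref{L3.1} to an application of the nonlocal ABP inequality \cite[Theorem~A.4]{M19} applied to the non-negative function $v := u^-$, which is supported on $\Sigma := \{u<0\}\cap D$. The anti-symmetry of $u$ together with the strict radial monotonicity of $k$ will, exactly as in the calculation sketched above \eqref{E3.2}, transfer the supersolution property of $u$ on the whole half-space into a genuine ABP-type inequality for $v$ on the truncated set $\Sigma$.

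The first and most delicate step is the viscosity derivation of the differential inequality for $v$. For a test function $\phi$ touching $v$ from above at some $x\in\Sigma$, I would set $\psi:=\phi-u-v$. Since $v=-u$ on $\Sigma$, $\psi$ coincides with $\phi$ on $\Sigma$; globally one has $\psi\geq -u$ with equality at $x$, so $-\psi$ is a legitimate from-below test for the supersolution $u$. Plugging $-\psi$ into $Lu-\upbeta|Du|-cu\leq 0$ at $x$, decomposing $L\psi(x)=L\phi(x)+L(-u-v)(x)$, and using $D\psi(x)=D\phi(x)$ on $\Sigma$, I obtain
\[
L\phi(x)-\upbeta|D\phi(x)|+c(x)v(x)\geq -L(-u-v)(x).
\]
The right-hand side is non-negative precisely when $L(-u-v)(x)\leq 0$, and this is where the reflection is used: splitting the integral $L(-u-v)(x)$ into the contributions from $\cH$ and from $\cH^c$, the anti-symmetry of $u$ across $\partial\cH$ pairs those contributions while the strict radial decrease of $k$ makes the sum non-positive (the computation appears in \cite[p.~11]{BJ20}). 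Thus $v$ is a viscosity subsolution of $Lw-\upbeta|Dw|+c(x)w=0$ on $\Sigma$, with $v\equiv 0$ on $\cH\setminus\Sigma$.

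Since $v\geq 0$, one has $-cv\geq -c^+ v$ pointwise, so the previous inequality reads $Lv-\upbeta|Dv|\geq -c^+(x)v(x)$ on $\Sigma$ with vanishing exterior data. I would then invoke the nonlocal ABP estimate \cite[Theorem~A.4]{M19} on the bounded open set $\Sigma\subset D$ to obtain
\[
\sup_\cH v \;\leq\; C\,\|c^+\,v\|_{L^n(\Sigma)} \;\leq\; C\,\|c^+\|_{L^\infty(D)}\,\|v\|_{L^n(\Sigma)}
\]
for a constant $C=C(n,\diam D,k,\upbeta)$; since $v=u^-$, this is the main estimate. For the narrow-domain consequence, if $c^+\leq \kappa_\infty$ on $D$, then H\"older's inequality gives $\|v\|_{L^n(\Sigma)}\leq |D|^{1/n}\sup_\Sigma v$, and the displayed bound becomes $\sup_\Sigma v\leq C\kappa_\infty |D|^{1/n}\sup_\Sigma v$. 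This forces $\sup_\Sigma v=0$ whenever $|D|<\delta:=(C\kappa_\infty)^{-n}$, i.e.\ $u\geq 0$ in $\cH$.

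The main obstacle I anticipate is the first step. The operator $L$ is nonlocal, so the sign $L(-u-v)(x)\leq 0$ is not pointwise information but an integral inequality that relies on both the anti-symmetry of $u$ across $\partial\cH$ and the strict radial decrease of $k$ — which is precisely why the strict monotonicity hypothesis on $k$ is essential in the statement. Once that sign is in place, the remaining deduction is a routine combination of \cite[Theorem~A.4]{M19} with a H\"older absorption.
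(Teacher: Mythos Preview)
Your proposal is correct and follows essentially the same route as the paper: define $v=u^-$ supported on $\Sigma=\{u<0\}\cap D$, use the test-function construction $\psi=\phi+(-u-v)$ together with the anti-symmetry/strict-monotonicity computation $L(-u-v)(x)\leq 0$ (exactly the ingredient leading to \eqref{E3.2}) to obtain a differential inequality for $v$, and then invoke the ABP estimate \cite[Theorem~A.4]{M19} followed by the H\"older absorption for the narrow-domain conclusion. The only cosmetic issue is a sign slip in your displayed inequality (the drift term should carry $+\upbeta|D\phi|$ rather than $-\upbeta|D\phi|$ after substituting $-\psi$ into the supersolution inequality), but this is harmless since the ABP estimate in \cite{M19} controls $\pm\upbeta|Dv|$ equally.
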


\begin{proof}
Set $\Sigma=\{u<0\}\cap D$ and define $v$ as above. From
\eqref{E3.2} we see that
\[
Lv +\upbeta |D v|+c(x)v \geq 0\quad \text{in}\; \Sigma.
\]
Since $v\geq 0$ in $\Sigma$ and $c\leq c^+$, we get 
$$Lv+\upbeta |D v|+c^+v \geq 0\quad \text{in}\; \Sigma.$$
Taking $f = -c^{+} v$ and using \cite[Theorem~A.4]{M19} we obtain,
for some constant $C_1$, that
\begin{align*}
\sup_{\Omega} u^{-}=
 \sup_{\Sigma}v &\leq \sup_{\Sigma ^c}\abs{v} + C_1 \norm{f}_{L^n(D)}\leq   C_1 \norm{c^+}_{L^{\infty}(D)}\norm{v}_{L^n(\Sigma)}
 =C_1 \norm{c^+}_{L^{\infty}(D)}\norm{u^-}_{L^n(D)}.
   \end{align*} 
This completes the proof of the lemma.
\end{proof}

Next result is a Hopf's lemma for anti-symmetric functions.
\begin{lemma}\label{L3.2}
Let $\cH$ be a half space, $D \subset \cH$, and $c\in L^{\infty}(D)$. If $u \in C_b(\Rn)$ is an anti-symmetric supersolution of $Lu -\upbeta |D u|-c(x)u=0$ in $D$ with $u \geq 0$ in $\cH$, then either $u\equiv 0$ in $\Rn$ or $u>0$ in $D$. Furthermore,
if $u\not\equiv0$ in $D$ and there exists a 
$x_0 \in \partial D\setminus\partial\cH$ with $u(x_0)=0$ such that there is a ball $B\subset D$ with $x_0 \in \partial B$, then there exists a $C>0$ such that
\[
\lim\inf_{t\to 0} \frac{u(x_0-t{\rm n})}{t} \geq C,
\]
where ${\rm n}$ is the inward normal at $x_0.$
\end{lemma}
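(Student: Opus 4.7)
I would prove the two parts of the lemma in sequence, exploiting the anti-symmetric structure throughout.

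\textbf{Strong maximum principle.} Suppose $u\not\equiv 0$ on $\Rn$ and, for contradiction, $u(x_1)=0$ for some $x_1\in D$. By continuity, anti-symmetry and $u\ge 0$ on $\cH$, there is a ball $B_\rho(z_0)\subset\cH$ with $u\ge \epsilon_0>0$ on it. Since $u\ge 0$ in a neighbourhood of $x_1\in D\subset\cH$, the constant zero function touches $u$ from below at $x_1$, so the viscosity supersolution property yields $aI[v_r](x_1)\le 0$ for every small $r>0$, where $v_r=0$ on $B_r(x_1)$ and $v_r=u$ outside. Using that $k$ is radial and $u$ is anti-symmetric, together with the substitution $\zeta=\sR_{\lambda,e}(\eta)$ on the reflected half-space, one obtains
\[
I[v_r](x_1)=\int_{\cH} u(\zeta)\bigl[k(|\zeta-x_1|)\mathbf{1}_{\{|\zeta-x_1|\ge r\}} - k(|\zeta-\bar x_1|)\bigr]\,d\zeta
\]
(with $|\zeta-\bar x_1|\ge r$ automatic on $\cH$ whenever $r<\dist(x_1,\partial\cH)$). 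Since $|\zeta-x_1|<|\zeta-\bar x_1|$ for $\zeta\in\cH$ and $k$ is strictly decreasing, the bracket is strictly positive on $\{|\zeta-x_1|\ge r\}$; the set $B_\rho(z_0)$ contributes a fixed positive constant once $r$ is small, while $-\int_{\{|\zeta-x_1|<r\}\cap\cH} u(\zeta)k(|\zeta-\bar x_1|)\,d\zeta\to 0$ as $r\to 0^+$ by continuity of $u$ at $x_1$ and $u(x_1)=0$. This forces $I[v_r](x_1)>0$ for small $r$, contradicting the supersolution inequality; hence $u>0$ on $D$.

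\textbf{Hopf inequality.} From Part~1, $u>0$ on $D$, so $m:=\inf_{\bar B_{R/2}(y_0)}u>0$. Shrinking $R$ if necessary (using $x_0\notin\partial\cH$) I arrange $B_R(y_0)\cap\sR_{\lambda,e}(B_R(y_0))=\emptyset$. Following the construction of \cref{L2.2} with the parameter $\eta$ enlarged enough to also absorb the drift $\upbeta|D\phi|$, the lower-order term $\|c^+\|_\infty\|\phi\|_\infty$, and the bounded correction $a\|I\phi(\bar{\,\cdot\,})\|_\infty$ (finite because $\bar x$ stays outside the support of $\phi$), I obtain a radial $C^2$ barrier $\phi$, supported in $\bar B_R(y_0)$, satisfying
\[
L\phi-\upbeta|D\phi|-\bigl(\|c^+\|_\infty\|\phi\|_\infty+a\|I\phi(\bar{\,\cdot\,})\|_\infty\bigr)\;\geq\;\gamma>0 \quad\text{in}\quad A:=B_R(y_0)\setminus\bar B_{R/2}(y_0),
\]
and obeying the linear lower bound $\phi(x)\ge \kappa(R-|x-y_0|)$ on $A$. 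Set $\tilde\phi:=\phi-\phi\circ\sR_{\lambda,e}$ and $w:=u-\epsilon\tilde\phi$ with $\epsilon\|\phi\|_\infty\le m$. Then $w$ is anti-symmetric, bounded, $w\ge 0$ on $\cH\setminus A$ (inside $\bar B_{R/2}(y_0)$ by choice of $\epsilon$, in $\cH\setminus B_R(y_0)$ because $\tilde\phi\le 0$ there while $u\ge 0$), and $w\in C^2(A)$ by the interior regularity of \cite{MZ21}.

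Suppose, for contradiction, that $w$ attains a negative infimum over $\cH$ at some $x^*\in A$. The heart of the argument is the anti-symmetric nonlocal identity at an interior minimum,
\[
Iw(x^*)=\int_{\cH}(w(\zeta)-w(x^*))\bigl[k(|\zeta-x^*|)-k(|\zeta-\bar x^*|)\bigr]\,d\zeta \;+\; 2|w(x^*)|\int_{\cH} k(|\zeta-\bar x^*|)\,d\zeta,
\]
whose right-hand side is a sum of two strictly positive quantities---the first because $w(\zeta)\ge w(x^*)$ on $\cH$ combined with strict monotonicity of $k$, the second because $w(x^*)<0$. Inserting this lower bound together with $Dw(x^*)=0$, $\Delta w(x^*)\ge 0$, the pointwise supersolution inequality for $u$ at $x^*$, the nonnegativity $u(x^*)\ge 0$ (hence $u(x^*)\le \epsilon\phi(x^*)$), and the constructed barrier inequality for $\phi$, yields
\[
\epsilon\gamma + 2a|w(x^*)|\int_{\cH} k(|\zeta-\bar x^*|)\,d\zeta \;\leq\; 0,
\]
which is impossible. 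Therefore $w\ge 0$ on $\cH$, whence $u\ge \epsilon\phi$ on $A$; evaluating at $x=x_0-t\,{\rm n}$ and using the linear lower bound on $\phi$ gives $\liminf_{t\to 0^+}u(x_0-t{\rm n})/t\ge \epsilon\kappa>0$, as required.

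The principal obstacle is the sharp anti-symmetric nonlocal identity at the interior minimum: it is the extra positive term $2|w(x^*)|\int_{\cH} k(|\zeta-\bar x^*|)\,d\zeta$---absent in the purely local Hopf argument---that drives the contradiction, and it depends crucially on the strict monotonicity of $k$. Ensuring $\sR_{\lambda,e}(B_R(y_0))$ is disjoint from $B_R(y_0)$ is what lets one estimate the reflected nonlocal contribution $I\phi(\bar x)$ by a bounded constant and fold it into the barrier inequality; if this geometric separation were to fail one would need a genuinely anti-symmetric barrier whose analysis is substantially more delicate.
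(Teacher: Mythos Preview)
Your Part~1 is essentially the paper's argument written out in full: the paper sets up the zero test function at the vanishing point and then cites \cite{BJ20} for the nonlocal computation, whereas you carry out the anti-symmetric splitting of $I[v_r](x_1)$ explicitly.

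For Part~2 both you and the paper antisymmetrize a barrier and compare, but the implementations differ. The paper takes as barrier the Dirichlet solution $\vartheta$ of $L\vartheta-\upbeta|D\vartheta|=-1$ in $B$, sets $w=\kappa(\vartheta-\vartheta\circ\sR)$, outsources the comparison $u\geq w$ to the arguments of \cite{BJ20}, and then obtains the linear lower bound from Hopf's lemma for $\vartheta$ (\cite{BM21}). You instead build an explicit \cref{L2.2}-type barrier with the subsolution constant enlarged to swallow the drift, zero-order, and reflected-nonlocal corrections, and run the comparison by hand via the anti-symmetric identity at an interior minimum. Your route is more self-contained and makes transparent exactly where strict monotonicity of $k$ enters (the strictly positive term $2a|w(x^*)|\int_{\cH}k(|\zeta-\bar x^*|)\,d\zeta$); the paper's route is shorter because the heavy lifting is delegated to prior results.

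There is one genuine slip. You assert ``$w\in C^2(A)$ by the interior regularity of \cite{MZ21}'', but that reference yields $C^{1,\eta}$ regularity only for functions satisfying a \emph{two-sided} system of inequalities as in \eqref{Eq-1}; a bare viscosity supersolution need not be $C^2$, so the pointwise evaluations $Dw(x^*)=0$, $\Delta w(x^*)\geq 0$ and your nonlocal identity for $Iw(x^*)$ are not justified as written. The repair is easy and does not change your strategy: since $\tilde\phi$ is $C^2$ near $x^*$, the function $\epsilon\tilde\phi+w(x^*)$ is an admissible $C^2$ test function touching $u$ from below at $x^*$; the viscosity supersolution inequality (with $u$ retained outside a ball $B_r(x^*)$, then $r\to 0$) reproduces exactly the chain of inequalities you need. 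Alternatively, since in the only application (\cref{T3.1}) the function playing the role of $u$ is classical, you could simply add that hypothesis.
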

\begin{proof}
With any loss of generality, we may assume that $c\geq 0$.
Suppose that $u\not\equiv 0$ and $u\ngtr 0$ in $D$. Then there exists a compact set $K \subset D $ such that $\inf_K u=\delta >0$ and a point $x_1 \in D$ such that $u(x_1)=0$. For $\varepsilon$ small
enough we can choose the test function
\[
\phi(y)=\left\{
\begin{array}{ll}
0 & \text{for}\; y\in B_\varepsilon(x_1),
\\[2mm]
u & \text{for}\; y\in B^c_\varepsilon(x_1).
\end{array}
\right.
\]
Note that $\Delta \phi(x_1)=0$,
$D\phi(x_1)=0$. Since $k$ is radially non-increasing and positive, from the proof of \cite[Theorem~3.2]{BJ20} it follows that $u\equiv 0$ in $D$. This contradicts our assertion. Thus
either $u\equiv 0$ in $\Rn$ or $u>0$ in $D$.

Now we prove the second part of the lemma. Assume that $u>0$ in $D$. Let $B$ be a ball in $D$ that touches $\partial D$ at $x_0$ and $B\Subset \cH$. This is possible since
$x_0\in \partial D\setminus \partial\cH$.
 Let $\vartheta$ be the positive solution to
$$L\vartheta-\upbeta |D \vartheta|=-1\quad \text{in}\; B, \quad
\text{and}\quad \vartheta=0\quad \text{in}\; B^c.$$
Existence of $\vartheta$ follows from \cref{T2.3} and Leray-Schauder fixed point theorem theorem.
Define $w=\kappa(\vartheta-\vartheta\circ\sR)$. 
Then we have $Lw \geq -\kappa C$ in $B$ for some positive constant $C$. Now repeating the arguments of \cite[Theorem~3.2]{BJ20} it
follows that for some $\kappa>0$ we have $u \geq w$ in $B$.
To complete the proof we need to apply Hopf's lemma on $\vartheta$ at the point $x_0$ (cf. \cite[Theorem~2.2]{BM21}).
\end{proof}

Given $\lambda \in \RR$,  $ e \in \partial B_1(0)$, define
\begin{equation}\label{E3.3}
v(x) = v_{\lambda,e} (x) = u(x ) - u(\bar{x})  \quad x \in \Rn,
\end{equation}
where $\bar{x} = \sR_{\lambda,e}(x)$ denotes the reflection of x 
by $T_{\lambda,e} \df \partial \cH_{\lambda,e}$ and $\cH_{\lambda,e} = \{ x\in \Rn \; : \; x \cdot e > \lambda\}$. 
We note that $\Rn \setminus \overline\cH_{\lambda,e} = \cH_{- \lambda, -e}$.  Moreover, let
$\lambda < l \df \sup_{x\in D} x\cdot e$.  Then $\cH \cap \Omega $ is nonempty for all $ \lambda < l$ and we put
$D_{\lambda} \df \sR_{\lambda ,e} (\Omega \cap \cH)$. Then for all $\lambda < l$ the function $v$ satisfies
\begin{equation}\label{E3.4}
\begin{split}
Lv + \upbeta |D v(x)| -c(x) v&\geq 0 \quad \text{in} \; D_{\lambda},
\\
Lv -\upbeta |D v(x)| -c(x) v&\leq 0 \quad \text{in} \; D_{\lambda},
\\
v &\geq 0\quad \text{in} \; \cH_{-\lambda, -e} \setminus D_{\lambda},
\\
v(x)&=-v(\bar{x}) \quad \text{for all}\; x \in \mathbb{R}^n, 
\end{split}
\end{equation}
where $\upbeta$ is the Lipschitz constant of $H$ on the interval
$[0, \sup |D u|]$ and
$$c(x)=\frac{f(u(x))-f(u(\bar{x}))}{u(x)-u(\bar{x})}\,.$$
 In view of Lemmas \ref{L3.1}-\ref{L3.2} and 
\eqref{E3.4}, we see that $v=v_{\lambda, e}$ is either $0$ in $\Rn$
or positive in $D_\lambda$ for $\lambda$ close to $l$. Now as we
decrease $\lambda$ one of following two situation may occur.
\begin{center}
{\bf Situation A}: there is a point $p_0\in 
\partial \Omega\cap\partial D_\lambda\setminus T_{\lambda, e}$,
\\
{\bf Situtation B}: $T_{\lambda, e}$ is orthogonal to $\partial \Omega$ at some point $p_0\in \partial \Omega\cap T_{\lambda, e}$.
\end{center}
$\lambda_0$ be the maximal value in $(-\infty, l)$ such that one of these situations occur. We show that $\Omega$ is symmetric with respect to $T_{\lambda_0, e}$. This would complete the proof of \cref{T3.1} since $e$ is arbitrary. Also, note that, since $u>0$  in $\Omega$, to 
establish the symmetry of $\Omega$ with respect to $T_{\lambda_0, e}$, it is enough to show that $v=0$ in $\Rn$. Suppose, to the contrary, that
$v>0$ in $D_{\lambda_0}$.\\[2mm]

\noindent{\bf Situation A:} In this case we have $v(p_0)=0$ and 
therefore, by \cref{T2.3,L3.2}, we get $\frac{\partial v}{\partial \rm n}(p_0)>0$. But, by \eqref{ET3.1A}, we have
$$\frac{\partial v}{\partial \rm n}(p_0)=\frac{\partial u}{\partial \rm n}(p_0) - \frac{\partial u}{\partial \rm n}(\sR(p_0))=0.$$
This is a contradiction.

\noindent{\bf Situation B:} This situation is a bit more complicated than the previous one. Set $T=T_{\lambda_0, e},
\cH=\cH_{\lambda_0, e}$ and $\sR=\sR_{\lambda_0, e}$. By rotation and translation, we may set $\lambda_0=0$, $p_0=0$, $e=e_1$
and $e_2\in T$ is the interior normal at $\partial D$.

Next two lemmas are crucial to get a contradiction in Situation B.
\begin{lemma}\label{L3.3}
We have
\[
v(t\bar{\eta })=o(t^2),\quad \text{as} \;\; t\to 0^+,
\] 
where $\bar{\eta}=e_2-e_1=(-1,1, 0, ...,0) \in \Rn.$
\end{lemma}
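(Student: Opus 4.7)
The plan is to combine the overdetermined boundary condition with a second-order Taylor expansion at $p_0=0$.  Setting $V=u-c\delta$, which vanishes together with its gradient on $\partial\Omega$ by the data in \eqref{ET3.1A}, one has the decomposition
\[v(t\bar{\eta})=V(P)-V(\bar P)+c\bigl[\delta(P)-\delta(\bar P)\bigr],\qquad P=-te_1+te_2,\;\bar P=\sR(P)=te_1+te_2.\]

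First I would extract the algebraic consequences of the overdetermined condition at $p_0$.  Localize $\partial\Omega$ near $p_0$ as the graph $\{x_2=\phi(x_1,x_3,\dots,x_n)\}$ with $\phi(0)=0$ and $\nabla\phi(0)=0$, and parametrize a tangential curve by $q(s)=(s,\phi(s,0),0,\dots,0)$.  The relation $\nabla u=c{\rm n}$ on $\partial\Omega$ gives $\partial_2 u(q(s))=c(1+|\nabla\phi(s,0)|^2)^{-1/2}$; differentiating in $s$ at $s=0$ and using $\nabla\phi(0)=0$ yields $u_{12}(p_0)+u_{22}(p_0)\phi_1(0)=0$, hence $u_{12}(p_0)=0$.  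The same computation performed for $\delta\in C^2(\bar\Omega)$, whose gradient equals the interior unit normal on $\partial\Omega$, gives $\delta_{12}(p_0)=0$.  Consequently $V_{12}(p_0)=u_{12}(p_0)-c\,\delta_{12}(p_0)=0$, and an analogous tangential differentiation of $\partial_1u(q(s))=-c\phi_1(s,0)(1+|\nabla\phi|^2)^{-1/2}$ would give $V_{11}(p_0)=0$, although only the mixed derivative is needed below.

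Assuming a one-sided $C^2$ Taylor expansion $V(x)=\tfrac12\,x\transp D^2V(p_0)\,x+o(|x|^2)$ at $p_0$ (the analogous expansion of $\delta$ is automatic since $\delta\in C^2(\bar\Omega)$), a direct computation with $P=(-t,t,0,\dots,0)$ and $\bar P=(t,t,0,\dots,0)$ gives
\[P\transp D^2V(p_0)P-\bar P\transp D^2V(p_0)\bar P=-4t^2\,V_{12}(p_0)=0,\]
so $V(P)-V(\bar P)=o(t^2)$, and similarly $\delta(P)-\delta(\bar P)=-2t^2\,\delta_{12}(p_0)+o(t^2)=o(t^2)$; combining these in the decomposition above yields $v(t\bar{\eta})=o(t^2)$.

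The main obstacle is justifying the second-order expansion of $V$ at the boundary point $p_0$: from \cref{T2.3} and \cite{MZ21} one only has $V\in C^{1,\gamma}(\bar\Omega)\cap C^{2,\eta}(\Omega)$, so the expansion at $p_0\in\partial\Omega$ must be obtained via a boundary regularity upgrade.  I would achieve this by applying rescaled interior Schauder estimates to $V$ on balls $B_{\delta(x)/2}(x)$ as $x\to p_0$; the vanishing of $V$ and $\nabla V$ on $\partial\Omega$, combined with the boundary H\"older regularity of $u/\delta$ from \cref{T2.2} and the $C^2$ smoothness of $\delta$, ensures that the rescaled Hessians $D^2V(x)$ have one-sided limits at $p_0$ that match the algebraic boundary values computed above.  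This bootstrap is the step carried out in detail in \cite{FJ15,BJ20} for closely related nonlocal operators, to which the paper defers.
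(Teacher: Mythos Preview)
Your route through a pointwise second-order Taylor expansion of $V=u-c\delta$ at $p_0$ is the classical Serrin corner argument, but it is not the one the paper takes and, more importantly, the expansion you need is not available from the regularity established here. The paper only proves $u\in C^{1,\gamma}(\bar\Omega)\cap C^{2,\eta}(\Omega)$, so neither $u_{12}(p_0)$ nor an expansion $V(x)=\tfrac12\, x\transp D^2V(p_0)\,x+o(|x|^2)$ is known to exist; your tangential differentiation of $\partial_2u(q(s))$ computes what $u_{12}(p_0)$ \emph{would} equal if it existed, but does not produce it. Your proposed fix via rescaled interior estimates does not close this gap either: writing $V=(\psi-c)\delta$ with $\psi=u/\delta$, one has $D^2V=(D^2\psi)\delta+2\,D\psi\otimes D\delta+(\psi-c)D^2\delta$, and \cref{L2.8} together with the corresponding interior second-order bound gives $|D\psi|\lesssim\delta^{\upkappa-1}$ and $|D^2\psi|\lesssim\delta^{\upkappa-2}$, hence only $|D^2V|\lesssim\delta^{\upkappa-1}$, which diverges as $x\to p_0$. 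There is no reason for $D^2V$ to have a one-sided limit there.

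Your claim that \cite{FJ15,BJ20} ``carry out this bootstrap'' is also a misreading of those references. Precisely because $u$ fails to be $C^2$ up to the boundary in the nonlocal setting, they do \emph{not} produce Hessians at the corner; they argue instead through the factorization $u=\psi\,\delta$ (resp.\ $u=\psi\,\delta^s$) and the boundary H\"older regularity of $\psi$. That is exactly the route the paper follows: it invokes \cref{T2.2} to obtain $\psi=u/\delta\in C^{\upkappa}(\bar\Omega)$, uses the overdetermined condition to conclude $\psi\equiv c$ on $\partial\Omega$ and hence $|u(x)-c\delta(x)|=|\psi(x)-c|\,\delta(x)\le C\,\delta(x)^{1+\upkappa}$, and combines this with the $C^2$ expansion of $\delta$ alone (for which $\delta_{12}(p_0)=0$ follows already from $|\nabla\delta|\equiv 1$), as in \cite[Lemma~3.2]{BJ20}. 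Second derivatives of $u$ at the boundary are bypassed entirely.
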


Proof of \cref{L3.3} follows from \cref{T2.2} and \cite[Lemma~3.2]{BJ20}.

\begin{lemma}\label{L3.4}
Let $D \subset \mathbb{R}^n, n\geq2$, be an open bounded domain such that $0\in \partial D$ and $\{x_1=0\}$ is orthogonal and there is a ball $B \subset D$ with $\bar{B}\cap {\partial\bar{D}}=\{0\}$. Denote
$$D^*:= D \cap \{x_1 <0\},$$
and assume that $w\in C_b(\mathbb{R}^n)$ is an anti-symmetric supersolution of
\begin{equation*}
\begin{aligned}
    Lw - \upbeta |D w|-c(x)w &=0 \quad \text{in} \; D^*
    \\
    w &\geq 0 \quad \text{in} \; \{x_1 <0\}
    \\
     w &>0 \quad \text{in}\; D^*.
 \end{aligned}   
\end{equation*}
Let $\bar{\eta}=e_2-e_1=(-1,1, \cdot \cdot \cdot,  0) \in \mathbb{R}^n$, then there exist positive $C, t_0$, dependent  on $D^*, n$, such that 
\[
w(t\bar{\eta}) \geq C t^2 
\]
for all $t\in (0, t_0).$
\end{lemma}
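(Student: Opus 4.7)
The plan is to construct, for some small $\rho > 0$, an anti-symmetric barrier $\psi \in C_b(\Rn)$ supported near the corner $0$ that (i) is nonnegative on $\{x_1 < 0\}$; (ii) satisfies the subsolution inequality $L\psi + \upbeta|D\psi| - c(x)\psi \leq 0$ in $Q := D^* \cap B_\rho(0)$; (iii) satisfies $\epsilon \psi \leq w$ on $\partial Q$ for some $\epsilon > 0$; and (iv) satisfies $\psi(t\bar\eta) \geq \kappa_0 t^2$ for small $t > 0$. Granted such $\psi$, the difference $w - \epsilon\psi$ is an anti-symmetric supersolution of $Lu - \upbeta|Du| - c(x)u = 0$ on $Q$ with $w - \epsilon\psi \geq 0$ on $\{x_1 < 0\} \setminus Q$; choosing $\rho$ small so that $|Q|$ lies below the threshold of \cref{L3.1} (with $\kappa_\infty = \|c^+\|_{L^\infty}$), the narrow-domain maximum principle forces $w \geq \epsilon\psi$ on $\{x_1 < 0\}$, and evaluating at $t\bar\eta$ yields the conclusion.

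For the barrier, the idea is to use an auxiliary torsion-type function. Via \cref{T2.3} and a Leray--Schauder fixed-point argument (as used to produce $\vartheta$ in the proof of \cref{L3.2}), let $\varphi > 0$ solve
\[
L\varphi + \upbeta|D\varphi| - \|c^+\|_{L^\infty}\varphi = -1 \quad \text{in } D \cap B_{2\rho}(0), \qquad \varphi = 0 \quad \text{in } (D \cap B_{2\rho}(0))^c.
\]
By the boundary regularity \cref{T2.2} together with a Hopf-type estimate, $\varphi(x) \asymp \dist(x, \partial D)$ near $\partial D \cap B_\rho(0)$. Define $\psi(x) = -\kappa\, x_1\, \varphi(x)$ for $x_1 \leq 0$, extended anti-symmetrically by $\psi(x) = -\psi(\sR x)$ on $\{x_1 > 0\}$, with a constant $\kappa > 0$ to be fixed. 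Since the interior normal to $\partial D$ at $0$ is $e_2$, we have $\dist(t\bar\eta, \partial D) = t + o(t)$, so $\psi(t\bar\eta) = \kappa t\, \varphi(t\bar\eta) \geq \kappa_0 t^2$ for small $t$, verifying (iv). The lateral estimate in (iii) on $\partial Q \setminus (\partial D \cup T)$ follows by applying \cref{L3.2} at a point $x^\ast \in \partial D \cap \{x_1 = -\rho/2\}$ to obtain $w \gtrsim \dist(\cdot, \partial D)$ locally, together with the strict positivity $w > 0$ on $D^*$ and continuity; combined with the vanishing of $\psi$ on $T$ and on $\partial D$, taking $\epsilon$ small gives (iii).

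The principal analytic content is the subsolution inequality (ii). A formal product-rule computation gives
\[
L\psi(x) = -\kappa\bigl(x_1 L\varphi(x) + 2\partial_{x_1}\varphi(x)\bigr) + \kappa\, a\, J(x),
\]
where $J(x)$ is a nonlocal cross term reflecting that $\psi$ on $\{x_1 > 0\}$ equals $-\kappa x_1 \varphi(\sR x)$ rather than $-\kappa x_1 \varphi(x)$. Using the defining equation for $\varphi$ and $-x_1 \geq 0$ in $Q$, one obtains $L\psi \leq -\kappa|x_1| - 2\kappa \partial_{x_1}\varphi + \kappa a J(x)$ in $Q$, and the gradient and zero-order terms $\upbeta|D\psi|$ and $|c(x)\psi|$ are of order $O(\kappa)$ on $Q$; taking $\kappa$ and $\rho$ small then produces (ii). The main obstacle is controlling $J(x)$: because $\psi$ is anti-symmetric and the kernel $k$ is strictly radially decreasing (the standing hypothesis of \cref{T3.1}), the symmetrization argument already used after \eqref{E3.2} yields $J(x) \leq 0$ for $x_1 < 0$, but making this quantitative, and handling the fact that $\varphi$ is only $C^\upkappa$ up to $\partial D$ by \cref{T2.2}, requires carrying out the subsolution check in the viscosity sense via test-function comparisons of the type used in the proof of \cref{L2.7}.
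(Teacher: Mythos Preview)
Your overall framework---an anti-symmetric barrier of the form ``$-\kappa x_1\times(\text{torsion function})$'' combined with the narrow-domain maximum principle \cref{L3.1}---is exactly the paper's approach. However, the argument for the key subsolution inequality (ii) has a genuine gap. First, the direction is reversed: for $w-\epsilon\psi$ to be an anti-symmetric supersolution of $Lu-\upbeta|Du|-c(x)u=0$ in $Q$ (so that \cref{L3.1} applies), one needs $L\psi-\upbeta|D\psi|-c(x)\psi\geq 0$, i.e.\ $L\psi$ must be \emph{large and positive}, not $\leq 0$. Second, and more structurally, your barrier $\psi=-\kappa x_1\varphi$ is homogeneous of degree one in $\kappa$, so ``taking $\kappa$ small'' cannot produce any inequality---every term in $L\psi-\upbeta|D\psi|-c\psi$ scales linearly with $\kappa$. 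Taking $\rho$ small also fails near $\{x_1=0\}\cap Q$: there the good term $\kappa|x_1|$ vanishes while $\upbeta|D\psi|\sim\kappa|\varphi|$ does not, and $\partial_{x_1}\varphi$ has no definite sign to compensate.

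The paper resolves both issues by two devices you omit. It solves the torsion problem on a ball $B=B_R(Re_2)$ centered on the hyperplane $T$, so the solution $g$ is radial about $Re_2$; then $-\kappa x_1 g$ is automatically anti-symmetric and your cross term $J$ never appears. More importantly, the paper augments the barrier with bump functions, setting
\[
h(x)=-\kappa x_1 g(x)+\theta\phi(x)-\theta\phi(\sR x),
\]
where $\phi\in C_c^\infty$ is supported in a ball $M_1\Subset D^*$ with $\theta=\inf_{M_1}w>0$ and $M_1$ is disjoint from $K=B\cap\{x_1<0\}$. In $K$ the local part of the bump terms vanishes, but since $k$ is strictly radially decreasing one has, for $x\in K$,
\[
aI\bigl(\theta\phi-\theta\phi\circ\sR\bigr)(x)=a\theta\int_{M_1}\phi(z)\bigl(k(x-z)-k(x-\sR z)\bigr)\,\D z\;\geq\;\theta_0>0
\]
uniformly, a strictly positive contribution to $Lh$ \emph{independent of} $\kappa$. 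All remaining terms in $Lh-\upbeta|Dh|+c(x)h$ on $K$ are $O(\kappa)$, so choosing $\kappa$ small now genuinely yields the required inequality. The bump terms also guarantee $h\leq w$ on $\{x_1<0\}\setminus K$ (since $h=\theta\phi\leq\theta\leq w$ on $M_1$ and $h=0$ elsewhere outside $K$), replacing your condition (iii).
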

Clearly, \cref{L3.3,L3.4} give a contradiction to the Situation B.

\begin{proof}[Proof of \cref{T3.1}]
Proof follows from the above discussion and the arguments in
\cite[p.~11]{FJ15}.
\end{proof}
In the remaining part of this section we provide a proof of \cref{L3.4}.

\begin{proof}[Proof of \cref{L3.4}]
We follow the arguments of \cite[Lemma~3.3]{BJ20}.
Fix a ball $B=B_R(Re_2) \subset D$ for some $R>0$ small enough with $\partial B \cap \partial D=\{0\}$. Denote $$K=\{x_1<0\}\cap B.$$
Let $M_1 \Subset D^*$ such that $\theta\df\inf_{M_1} w >0$ and $M_2=\sR(M_1)$, that is, reflection of $M_1$ with respect to 
$\{x_1=0\}.$ Furthermore, we may assume that $M_1$ to be an open ball and taking $R$ smaller, we also assume that $\dist(K, M_1) >0$ and $|K| < \varepsilon$ for some small $\varepsilon >0$.  Now let $g$ be the unique positive viscosity solution to
\begin{align*}
    Lg-\upbeta |D g|&= -1\quad \text{in}\; B
    \\
    g&=0\quad \text{in}\; B^c.
\end{align*}
From \cref{T2.1}, we know that $\norm{g}_{C^{0,1}(\Rn)}\leq C$.
Let $\phi \in C^{\infty}_c(\mathbb{R}^d)$, $\supp(\phi) \subset M_1$, $0\leq \phi \leq 1$ and there exists a $U \subset M_1$ such that $\phi =1$ in $U$, $|U| >0$. Construct a barrier function $h$ of the following form:
\[h(x)=-\kappa x_1 g(x)+\theta \phi(x)-\theta \phi(\sR(x)).
\] 
Choosing $\kappa>0$ small enough, it can be easily checked that 
(see \cite[Lemma~3.3]{BJ20})
\begin{align*}%\label{claim}
    Lh-\upbeta |D h| + c(x)h \geq 0
\end{align*}
in $K$. It is also standard to see that $g$ is radial about the point $Re_2$ (cf. \cite[Theorem~4.1]{BM21}).
Thus we have : (i) $w-h$ is anti symmetric, (ii) $w-h \geq 0$ in $\{x_1<0\}\setminus K$, since because $\theta >0$, and (iii) $L(w-h) -\upbeta|D(w-h)|-c(x)(w-h) \leq 0$. Applying \cref{L3.1}, we 
obtain $w-h\geq 0$ in $\{x_1<0\}$.
Hence 
\[w(t\bar{\eta})\geq h(t\bar{\eta})\geq C t^2\]
for $t\in (0, t_0)$, where we used Hopf's lemma on $g$. This completes the proof. 
\end{proof}
%\vspace{1in}
%%%%%%%%%%%%%%%%%%%%%%%%%%%%%%%%%%%%%%%%%%%%%%%%%%%%%%%%%%%%%%%%%%%%%%%%%%%%%%%%
\subsection*{Acknowledgement}
We thank the referee for his/her careful reading of the manuscript and suggestions.
This research of Anup Biswas was supported in part by a Swarnajayanti fellowship (DST/SJF/MSA-01/2019-20). Mitesh Modasiya is partially supported by CSIR PhD fellowship (File no. 09/936(0200)/2018-EMR-I).

%\subsection*{Declaration of competing interest.}
%The authors declare that there is no conflict of interest regarding
%the publication of this article.
%
%\subsection*{Data Availability.} Data sharing not applicable to this article as no datasets were generated or
%analyzed during this study.

\end{document}